\providecommand{\coloneqq}{:=}
\theoremstyle{plain}
\newtheorem{theo}{Theorem}[section]
\newtheorem{coro}[theo]{Corollary}
\newtheorem{lemm}[theo]{Lemma}
\newtheorem*{lemm*}{Lemma}
\newtheorem{prop}[theo]{Proposition}
\newtheorem{namet}[theo]{\myThmName}
\newtheorem*{namet*}{\myThmName}
\newenvironment{nthm}[1][\kern-.35em]{\edef\myThmName{#1}\begin{namet}}{\end{namet}}
\newenvironment*{nthm*}[1][\kern-.35em]{\edef\myThmName{#1}\begin{namet*}}{\end{namet*}}
\theoremstyle{definition}
\newtheorem{defi}[theo]{Definition}
\newtheorem*{defi*}{Definition}
\newtheorem{exam}[theo]{Example}
\newtheorem{exas}[theo]{Examples}
\newtheorem{rema}[theo]{Remark}
\newtheorem*{rema*}{Remark}
\newtheorem{rems}[theo]{Remarks}
\newtheorem{named}[theo]{\myThmName}
\newtheorem*{named*}{\myThmName}
\newenvironment{ndef}[1][\kern-.35em]{\edef\myThmName{#1}\begin{named}}{\end{named}}
\newcounter{rememberEnumi}
\newcommand{\SaveEnumi}{\global\setcounter{rememberEnumi}{\value{enumi}}}
\newcommand{\RecallEnumi}{\setcounter{enumi}{\therememberEnumi}}
\let\setminus\smallsetminus
\newcommand{\HH}{\ensuremath{\mathbb{H}}}
\newcommand{\OO}{\ensuremath{\mathbb{O}}}
\newcommand{\ZZ}{\ensuremath{\mathbb{Z}}}
\newcommand{\id}{\ensuremath{\mathrm{id}}}
\let\setminus\smallsetminus
\newcommand{\gal}[1]{ {\mathchoice
    {\vbox
    {\m@th \ialign {##\crcr \noalign {\kern 1\p@ }\kern 1\p@ \hrulefill \crcr
        \noalign {\kern 1\p@ \nointerlineskip }%
        $\hfil \displaystyle {#1}\hfil $\crcr }}}
    {\vbox
    {\m@th \ialign {##\crcr \noalign {\kern 1\p@ }\kern 1\p@ \hrulefill \crcr
        \noalign {\kern 1\p@ \nointerlineskip }%
        $\hfil \textstyle {#1} $\crcr }}}
    {\vbox
    {\m@th \ialign {##\crcr \noalign {\kern 1\p@ }\kern 1\p@ \hrulefill \crcr
        \noalign {\kern 1\p@ \nointerlineskip }%
        $\hfil \scriptstyle {#1}\hfil $\crcr }}}
    {\vbox
    {\m@th \ialign {##\crcr \noalign {\kern 1\p@ }\kern 1\p@ \hrulefill \crcr
        \noalign {\kern 1\p@ \nointerlineskip }%
        $\hfil \scriptscriptstyle {#1}\hfil $\crcr }}}%
    }}
\newcommand{\N}[1][]{{\mathrm{N}}_{#1}}
\newcommand{\tr}[1][]{{\mathrm{T}}_{#1}}
\newcommand{\set}[2]{\left\{{#1}\left|\vphantom{#1#2\strut}\right.\, 
                    {#2}\right\}}
\newcommand{\smallset}[2]{\{{#1}\left|\vphantom{}\right.\, 
                    {#2}\}}
\newcommand{\C}[2][]{\operatorname{C}_{#1}(#2)}
\newcommand{\Aut}[2][]{\operatorname{Aut}_{#1}(#2)}
\newcommand{\Char}{\operatorname{char}}
\newcommand{\GL}[2][]{\mathrm{GL}_{#1}{(#2)}}
\newcommand{\SL}[2][]{\mathrm{SL}_{#1}{(#2)}}
\title{Subalgebras of Octonion Algebras} %
\author{%
  {Norbert Knarr,} 
  {Markus J.~Stroppel}}
  \let\MYauthor\shortauthor 
  \let\MYtitle\shorttitle
\newcommand{\keywords}[1]{\par\noindent{\normalfont\bfseries Keywords: }#1}
\newcommand{\subjclass}[1]{\par\noindent{\normalfont\bfseries Mathematics Subject
    Classification (MSC 2000): }#1}
\newcommand{\MSC}[1]{\href{http://www.ams.org/mathscinet/search/mscbrowse.html?code=#1}{#1}}
\begin{document}
\maketitle
\begin{abstract}\noindent %
  For an arbitrary octonion algebra, we determine all subalgebras. It
  turns out that every subalgebra of dimension less than four is
  associative, while every subalgebra of dimension greater than four
  is not associative. In any split octonion algebra, there are both
  associative and non-associative subalgebras of dimension four.
  Except for one-dimensional subalgebras spanned by idempotents, any
  two isomorphic subalgebras are in the same orbit under
  automorphisms. %
\end{abstract}
\subjclass{%
  \MSC{17A75}, 
  \MSC{17A36}, 
  \MSC{17A45}. 
}%
\keywords{octonion, quaternion, composition algebra, subalgebra, automorphism}

\section*{Introduction}

Octonion algebras are the most complicated examples of composition
algebras. %
Composition algebras are (not necessarily associative) algebras
endowed with a multiplicative quadratic form such that the associated
polar form is not degenerate. As a tool for the investigation of
quadratic forms, they have their roots in number
theory. Four-dimensional composition algebras (also called quaternion
algebras) are ubiquitous in the theory of quadratic forms.  
In the present paper, we are mainly concerned with composition
algebras of dimension~$8$, also called \emph{octonion algebras} (or
octaves).

It is known (see~\cite[1.6.2]{MR1763974}) that composition algebras
only occur in dimension $d\in\{1,2,4,8\}$. A composition algebra is
called \emph{split} if the quadratic form is isotropic. Over any
 field~$F$, there are split composition algebras of
dimension~$2$ (isomorphic to $F\times F$ with the quadratic form
mapping $(x,y)$ to~$xy$), of dimension~$4$ (isomorphic to
$F^{2\times2}$, the quadratic form is the determinant) and of
dimension~$8$ (obtained by doubling the split quaternion
algebra~$F^{2\times2}$, see~\ref{propertiesCA}\ref{doubling} below).
These split composition algebras are determined uniquely by their
dimension and the underlying field~$F$ (see~\cite[1.8.1]{MR1763974}).

The existence of non-split composition algebras depends in an
essential way on properties of the ground field~$F$. In particular, a
non-split composition algebra of dimension~$2$ over~$F$ is a separable
quadratic extension field. A non-split quaternion algebra exists if,
and only if, there is a separable quadratic extension and an
anisotropic hermitian form in two variables over that extension
(cp.~\cite{MR1681303}).  In particular, every composition algebra of
dimension greater than two over a finite field is split.  A quaternion
field (i.e., a non-split quaternion algebra) over~$F$ exists if, and
only if, there is a non-singular quadratic form on~$F^3$ such that the
polar form is not zero (see~\cite[Th.\,3.1.1]{MR3761133}). A necessary
(but not sufficient, see~\cite[Th.\,3.1.6]{MR3761133}) condition for
the existence of a non-split octonion algebra over~$F$ is of course
the existence of a non-singular quadratic form on~$F^8$.

Octonion algebras play a role in group theory: For instance, the
automorphism group of an octonion algebra is a group of type
$\mathrm{G}_2$, the action of the hyperplane orthogonal to~$1$ gives
the representation of that group on a vector space of dimension~$7$. %
Several important subgroups of that group can conveniently be seen as
stabilizers of certain elements or subspaces in the octonion algebra
(see~\cite{MR3871471}), %
and various exceptional isomorphisms between classical groups can be
understood using representations as such stabilizers.

We also remark that non-split octonion algebras of dimension~$8$
coordinatize Moufang planes that are not desarguesian, and that every
non-desarguesian Moufang plane is obtained in this way
(\cite{0007.07205}, \cite[Satz\,50, p.\,106 and Abschnitt\,6,
pp.\,157--186]{MR0073211}, \cite[Ch.\,19]{MR1938841}). %
Split octonion algebras are used to construct generalized hexagons
(namely, the split Cayley hexagons, see~\cite[p.\,207]{MR0143075}).

In the present paper, we study the subalgebras of octonion algebras.
The split case is of special interest; we find many types of
subalgebras in split octonion algebras that do not have counterparts
in non-split algebras. %
We also prove that subalgebras that are isomorphic as abstract
algebras are in the same orbit under automorphisms of the octonion
algebra (except if they are one-dimensional, and spanned by~$1$ and an
idempotent $p\ne1$, respectively). %
See Figure~\ref{fig:lattice} in Section~\ref{sec:maximal} below for a
graph showing the relative position of (orbits of) subalgebras. %
As a byproduct, we obtain that every subalgebra of dimension less
than~$4$ in a composition algebra is associative
(see~\ref{assocResult}). In split octonion algebras, there do occur
subalgebras of dimension~$4$ that are not associative
(see~\ref{fourdimNotAssoc}), and subalgebras of dimensions~$5$
and~$6$, respectively (see~\ref{Exasleftidealconstruction}); no
subalgebra of dimension greater than~$4$ is associative
(see~\ref{assocResult}).

Using our list of subalgebras, we identify the commutative
subalgebras, see~\ref{commResult}. It turns out that every
commutative subalgebra is associative. %

Maximal subalgebras of octonion algebras have been studied
in~\cite{MR349771}, a gap in that paper was noted in~\cite{MR2965899}
and closed in~\cite{MR3042630}. Unary subalgebras (i.e. subalgebras
containing the neutral element of multiplication) of finite octonion
algebras have been determined in~\cite{MR2726555}. %
Our present treatment is independent of those results. 

\section{Composition algebras}

\begin{defi}
  Let\/~$F$ be a field, and let\/ $(A,+,\cdot)$ be any (not
  necessarily unary, not necessarily associative) algebra over~$F$. %
  A quadratic form $q\colon A\to F$ is called \emph{multiplicative} if
  $q(x\cdot y)=q(x)q(y)$ holds for all $x,y\in A$. %
\end{defi}

\begin{nthm}[Lemma: multiplicative forms]\label{isotope}
  Let\/ $(A,+,\cdot)$ be any algebra over~$F$. %
  Assume that $a,b\in A$ are such that the maps
  $\rho_a\colon A\to A\colon x\mapsto xa$ and
  $\lambda_b\colon A\to A\colon x\mapsto bx$ are bijections, and
  define the multiplication~$*$ on~$A$ by
  $x*y \coloneqq \rho_a^{-1}(x)\cdot\lambda_b^{-1}(y)$. %
  \begin{enumerate}
  \item The element $b\cdot a$ is a neutral element for the multiplication~$*$.
  \item The triplet $(\lambda_b^{-1}|\id|\rho_a^{-1})$ is an isotopism from
    $(A,+,*)$ onto $(A,+,\cdot)$.
  \item Assume that there is a non-zero multiplicative quadratic
    form~$q$ on $(A,+,\cdot)$. \\%
    Then $s\coloneqq q(b\cdot a)$ is not zero, and $sq(x*y) = q(x)q(y)$
    holds for all $x,y\in A$. \\%
    The form $s^{-1}q$ is thus a non-zero multiplicative form on
    $(A,+,*)$.
  \end{enumerate}
\end{nthm}
\begin{proof}
  We compute $(b\cdot a)*y = \rho_a^{-1}(b\cdot a)\cdot\lambda_b^{-1}(y) %
  = b\cdot\lambda_b^{-1}(y) = \lambda_b(\lambda_b^{-1}(y)) = y$ %
  and $x*(b\cdot a) = \rho_a^{-1}(x)\cdot\lambda_b^{-1}(b\cdot a) %
  = \rho_a^{-1}(x)\cdot a = \rho_a(\rho_a^{-1}(x)) = x$. %
  This shows that~$b\cdot a$ is a neutral element for the
  multiplication~$*$.

  For $x,y\in A$, we note
  $\id(x*y) = x*y = \rho_a^{-1}(x)\cdot\lambda_b^{-1}(y)$. %
  This is the defining property of an isotopism from $(A,+,*)$ onto
  $(A,+,\cdot)$, cp.~\cite[Section\,1]{MR3019290}.  

  Pick $u\in A$ such that $q(u)\ne0$, and let\/
  $v\coloneqq\lambda_b^{-1}(\rho_a^{-1}(u))$. Then
  $0 \ne q(u) = q((b\cdot v)\cdot a) = q(b\cdot v)q(a) = q(b)q(v)q(a)
  = q(b)q(a)q(v) = q(b\cdot a)q(v)$ yields $q(b\cdot a)\ne0$. We
  compute %
  \(%
  sq({x*y}) = %
  sq\left(\rho_a^{-1}(x)\cdot\lambda_b^{-1}(y)\right) =
  \left(q(a)q(b)\strut\right)
  q\left(\rho_a^{-1}(x)\right)q\left(\lambda_b^{-1}(y)\right) %
  = %
  q\left(\rho_a^{-1}(x)\right)q(a)q(b)q\left(\lambda_b^{-1}(y)\right)
  = q\left(a\cdot \rho_a^{-1}(x)\right) q\left(b\cdot
    \lambda_b^{-1}(y)\right) %
  = q(x)q(y) %
  \). %
  This implies the last assertion. 
\end{proof}

\begin{defi}
A \emph{composition algebra} over~$F$ is a (not necessarily
associative) algebra~$C$ over~$F$ with a multiplicative quadratic form
$\N\coloneqq\N[C]\colon C\to F$ (called the \emph{norm form}) such
that the polar form of~$\N$ is not degenerate. %
This polar form will be written as
$(x|y) \coloneqq \N(x+y)-\N(x)-\N(y)$. %
We also assume that the algebra contains a neutral element for its
multiplication, denoted by~$1$. %
(However, see~\ref{isotope} for the case of isotopes without~$1$.)

As usual, the ground field~$F$ is embedded as~$F1$ in~$C$. %
The composition algebra is called \emph{split} if it contains divisors
of zero (i.e., non-zero elements of norm zero,
see~\ref{propertiesCA}\ref{inverse} below).
If no confusion is expected we write multiplication as juxtaposition. 
\end{defi}

The first chapter of~\cite{MR1763974} gives a comprehensive
introduction into composition algebras over arbitrary fields,
including the characteristic two case. %
We collect the basic facts that we need in the present paper (for
proofs, consult~\cite{MR1763974}): %

\begin{nthm}[Properties of composition algebras]\label{propertiesCA}
  Let\/ $C$ be a composition algebra over~$F$.
  \begin{enumerate}
  \item\label{kappaReflection}%
    The map
    $\kappa\colon C\to C\colon x\mapsto \gal{x} \coloneqq (x|1)1-x$ is
    an involutory anti-automorphism, called the \emph{standard
      involution} of~$C$. %
  \item\label{recoverNorm}%
    The norm and its polar form are recovered from the standard
    involution as $\N(x)=x\gal{x} = \gal{x}x$ and
    $(x|y) = x\gal{y}+y\gal{x}$. In particular, we have the hyperplane
    $1^\perp = \set{x\in C}{\gal{x}=-x}$ of %
    \emph{pure elements}.
  \item\label{adjoint}%
    For all $c,x,y\in C$, we have $(cx|y)=(x|\gal{c}y)$ and
    $(xc|y)=(x|y\gal{c})$, see~{\upshape\cite[1.3.2]{MR1763974}}.
  \item\label{MoufangId}%
    In general, the multiplication is not associative, but weak
    versions of the associative law are still valid; among them
    \emph{Moufang's identities}~{\upshape\cite[1.4.1, 1.4.2]{MR1763974}} %
    \[ %
    (ax)(ya)=a((xy)a), \quad %
    a(x(ay))=((ax)a)y, \quad %
    x(a(ya))=((xa)y)a.  %
    \] %
  \item\label{artin} %
    {\upshape Artin's Theorem (see~\cite[Prop.\,1.5.2]{MR1763974},
      \cite[Thm.\,3.1, p.\,29]{MR0210757})}: 
    For any two elements $x,y\in C$, the subalgebra generated by~$x$
    and~$y$ in~$C$ is associative.
  \item\label{inverse}%
    An element $a\in C$ is invertible if, and only if, its norm is not
    zero; we have $a^{-1}= \N(a)^{-1}\,\gal{a}$ in that case. %
    Thus a non-split composition algebra is a division algebra. %
    Note that Artin's Theorem then implies
    $a^{-1}(ax)=x=a(a^{-1}x)=(xa)a^{-1}=(xa^{-1})a$, for each
    $x\in C$. %
  \item%
    Each element $a\in C$ is a root of a polynomial of degree~$2$
    over~$F$, namely, the polynomial
    $X^2-(a+\gal{a})X+\N(a) \in F[X]$. %
    We call\/ $\tr(a) \coloneqq \tr[C](a)\coloneqq a+\gal{a}$ the
    \emph{trace} of\/~$a$ in~$C$. %
  \item\label{autSemilinear}%
    {\upshape\cite[1.3]{MR3019290}} %
    Every $F$-semilinear automorphism of\/~$C$ commutes with the
    standard involution. %
    If\/ $\dim_FC\ge4$ then every\/ $\ZZ$-linear automorphism of\/~$C$
    is $F$-semilinear.  Consequently, every\/ $\ZZ$-linear
    automorphism of such a\/~$C$ is a semi-similitude of the norm
    form, and every $F$-linear automorphism is an orthogonal map.
    
    We write $\Aut C$ for the group of all $\ZZ$-linear automorphisms
    (i.e., all bijections that are additive and multiplicative), %
    and $\Aut[F]{C}$ for the group of all $F$-linear automorphisms. %
  \item\label{doubling}%
    {\upshape\cite[1.5.3]{MR1763974}} %
    If\/ $D$ is a subalgebra of\/~$C$ with $\dim_FC=2\dim_FD$ and
    $D^\perp\cap D=\{0\}$ then $D^\perp=Dw$ holds for each
    $w\in D^\perp$ with $\N(w)\ne0$, and the multiplication in
    $C=D\oplus D^\perp$ is given by
    $(x+yw)(u+vw) = (xu-\N(w)\gal{v}y)+(vx+y\gal{u})w$. %
  \item\label{coveryByQuaternions}%
    {\upshape\cite[1.6.4]{MR1763974}} %
    Every element of an octonion algebra is contained in a quaternion
    subalgebra. 
    In fact, every element of a split octonion algebra is contained in
    a \emph{split} quaternion algebra, see~\upshape{\cite[5.5]{MR2746044}}.
  \end{enumerate}
\end{nthm}

We use the most comprehensive notion of ``subalgebra'' here:

\begin{defi}
  A \emph{subalgebra} in a given (not necessarily unary, not necessarily
  associative) algebra~$B$ over~$F$ is a vector subspace~$A$
  (over~$F$) which is closed under multiplication.

  In particular, even if~$B$ contains a neutral element of
  multiplication, we do \emph{not} require it to lie in the
  subalgebra. %
  A subalgebra~$A$ may well contain a neutral element for its
  multiplication; this will then be an idempotent centralized by~$A$,
  but need not be a unit element in~$B$ (in general, it will not even
  be centralized by~$B$, and need not be invertible).

  We remark that in~\cite[1.2.1]{MR1763974} much stronger properties
  are required for a \emph{composition subalgebra}; that is a
  subalgebra (in the sense defined above) that contains~$1$, and such
  that the restriction of the norm form has non-degenerate polar form.

  Note also that only subalgebras containing~$1$ are considered
  in~\cite{MR2726555}.%
\end{defi}

\begin{lemm}\label{closedUnderKappa}
  A vector subspace~$V$ of a composition algebra\/~$C$ is closed under
  the standard involution~$\kappa$ if, and only if, it either
  contains~$1$ or is contained in~$1^\perp$.  %
\end{lemm}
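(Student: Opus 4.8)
The plan is to prove the two implications separately, using nothing more than the explicit formula $\kappa(x)=\gal{x}=(x|1)1-x$ from \ref{propertiesCA}\ref{kappaReflection} and the description $1^\perp=\set{x\in C}{(x|1)=0}$ of the space of pure elements.

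For the direction ``$\Leftarrow$'', I would distinguish the two cases. If $1\in V$, then for every $x\in V$ the element $\kappa(x)=(x|1)1-x$ is an $F$-linear combination of the two elements $1,x\in V$, hence lies in~$V$; so~$V$ is closed under~$\kappa$. If instead $V\subseteq 1^\perp$, then $(x|1)=0$ for each $x\in V$, so $\kappa(x)=-x\in V$, and again~$V$ is closed under~$\kappa$.

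For the direction ``$\Rightarrow$'', assume that~$V$ is closed under~$\kappa$ but that $V\not\subseteq 1^\perp$. Then there exists $x\in V$ with $(x|1)\ne0$. Since~$V$ is a subspace containing both~$x$ and $\kappa(x)$, it contains $x+\kappa(x)=(x|1)1$; multiplying by the nonzero scalar $(x|1)^{-1}\in F$ shows $1\in V$. Hence~$V$ contains~$1$, which completes the proof.

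I do not expect any genuine obstacle here; the only point that needs a moment's care is the identity $x+\kappa(x)=(x|1)1$ (equivalently $\tr(x)1$), which must hold in every characteristic, including characteristic~$2$ — but this is immediate from the defining formula for~$\kappa$, so no separate argument is required.
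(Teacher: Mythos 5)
Your proposal is correct and follows essentially the same route as the paper's proof: both directions rest on the identity $x+\kappa(x)=(x|1)1=\tr(x)1$, which the paper uses in the form $\gal{v}=\tr(v)-v$ and $0\ne\tr(v)\in V\cap F$. No gaps.
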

\begin{proof}
  The assertion is obvious if $V\subseteq 1^\perp$. %
  If $1\in V$ then $\gal{v} = \tr(v)-v$ lies in~$V$, for each
  $v\in V$. %
  Conversely, if $v\in V\smallsetminus 1^\perp$ then $0 \ne \tr(v) \in
  V\cap F$ yields $1\in V$.   
\end{proof}

Note that a subalgebra will, in general, not be a composition algebra
since the polar form of the restriction of the norm form~$\N$ to the
subalgebra may be degenerate. %

\begin{ndef}[Quaternion algebras]
  In the context of abstract algebras over~$F$, a \emph{split
    quaternion algebra} is an algebra~$H$ over~$F$ which is isomorphic
  to the matrix algebra~$F^{2\times2}$. A \emph{quaternion field} can
  be defined as a skewfield (i.e., an associative division
  algebra) with center~$F$ and dimension~$4$ over~$F$, we call this a
  \emph{non-split quaternion algebra} over~$F$.

  Both classes of quaternion algebras are completely contained in the
  class of composition algebras; their members are exactly the
  composition algebras of dimension~$4$. The required (multiplicative)
  norm is obtained as the determinant in the split case. For the
  non-split case, one uses a two-dimensional subalgebra, and the fact
  that the tensor product with that subalgebra yields a split
  quaternion algebra. See also~\cite{MR1681303}.

  A \emph{quaternion subalgebra} of an octonion algebra over~$F$ is a
  subalgebra~$H$ that is isomorphic (as an abstract algebra) to some
  quaternion algebra over~$F$. 
\end{ndef}

For a quaternion subalgebra~$H$ of an octonion algebra~$\OO$, we do
not require that the restriction of the norm~$\N[\OO]$ to~$H$
satisfies any non-degeneracy conditions, and we do not require
\emph{a~priori} that~$H$ contains the neutral element~$1$
of~$\OO$. However, it turns out that the quaternion algebra is
embedded as nice as can be, see~\ref{SkolemNoether} below.

\begin{lemm}\label{anyTwoInQuat}
  Let\/~$\OO$ be a \emph{non-split} octonion algebra over a
  field\/ $F$ with $\Char{F}\ne2$. Then  any two elements of\/~$\OO$ are
  contained in a quaternion subalgebra of\/~$\OO$. %
\end{lemm}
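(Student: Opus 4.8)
The plan is to start from two given elements $x,y\in\OO$ and reduce, step by step, to a situation where a known structural fact applies. First I would invoke~\ref{propertiesCA}\ref{coveryByQuaternions}: the element $x$ lies in some quaternion subalgebra $H_1$ of $\OO$. If $y\in H_1$ we are done, so assume $y\notin H_1$. Since $\Char F\ne2$ and $\OO$ is non-split (hence a division algebra, by~\ref{propertiesCA}\ref{inverse}), the restriction of the norm form $\N=\N[\OO]$ to $H_1$ is anisotropic, so in particular non-degenerate, and $H_1^\perp$ is a complementary $4$-space with $H_1^\perp\cap H_1=\{0\}$. By~\ref{propertiesCA}\ref{doubling} we may write $\OO=H_1\oplus H_1^\perp$ with $H_1^\perp=H_1w$ for any $w\in H_1^\perp$ of nonzero norm, and with the explicit doubling multiplication $(a+bw)(c+dw)=(ac-\N(w)\gal d\,b)+(da+b\gal c)w$.

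Next I would decompose $y=y_0+y_1w$ with $y_0,y_1\in H_1$, where $y_1\ne0$ because $y\notin H_1$. The idea is that a quaternion subalgebra containing both $x$ and $y$ can be built as $K\oplus Kw'$ for a suitable two-dimensional subalgebra $K\subseteq H_1$ containing $x$ and a suitable $w'\in H_1^\perp$. Concretely: since $\Char F\ne 2$, the element $x$ generates (together with $1$) a two-dimensional $\kappa$-stable subalgebra $K=F1+Fx\subseteq H_1$ (if $x\in F1$, enlarge $K$ arbitrarily to any two-dimensional subalgebra of $H_1$ containing a pure unit; such exist since $\N|_{H_1}$ is anisotropic and $\Char F\ne2$). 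Then $D:=K\oplus Kw$ is a four-dimensional subspace of $\OO$ containing $x$; one checks using the doubling formula that $D$ is closed under multiplication, because $K$ is a commutative associative subalgebra stable under $\kappa$, so $\N(w)\gal d\,b\in K$ and $da+b\gal c\in K$ whenever $a,b,c,d\in K$. Hence $D$ is a four-dimensional subalgebra containing $1$, and since $\N|_D$ is a restriction of an anisotropic form it is itself anisotropic with non-degenerate polar form; thus $D$ is a composition algebra of dimension~$4$, i.e.\ a quaternion subalgebra, and it contains $x$.

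The remaining — and genuinely delicate — point is to arrange that this same $D$ also contains $y=y_0+y_1w$. This forces $y_0\in K$ and $y_1w\in Kw$, i.e.\ $y_1\in K$; but $y_0,y_1$ are essentially arbitrary elements of $H_1$, so we cannot expect them to lie in the two-dimensional $K=F1+Fx$. The fix is to choose the doubling vector more cleverly: replace $w$ by $w':=y_1w\in H_1^\perp$ (legitimate since $\N(y_1)\ne0$, as $\OO$ is a division algebra and $y_1\ne0$, so $\N(w')=\N(y_1)\N(w)\ne0$), giving $H_1^\perp=H_1w'$ and a new doubling decomposition $\OO=H_1\oplus H_1w'$. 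In these coordinates $y=y_0+1\cdot w'$, so the "$w'$-component" of $y$ is now $1\in K$. Choosing $K$ to contain both $x$ and $y_0$ — take $K$ to be any two-dimensional subalgebra of $H_1$ with $x,y_0\in K$, which exists unless $1,x,y_0$ are linearly independent — we then get that $D':=K\oplus Kw'$ contains both $x$ and $y$. The one subtlety left is the case where $1,x,y_0$ are linearly independent in $H_1$: then I would argue that $\{1,x,y\}$ already spans a three-dimensional subspace that, by Artin's Theorem~\ref{propertiesCA}\ref{artin} applied to $x$ and $y$, generates an associative subalgebra of $\OO$; this associative subalgebra has dimension $1$, $2$, or $4$ (a unital associative composition subalgebra in the anisotropic case cannot have dimension $3$, since its norm restricts to a non-degenerate multiplicative form and such forms only exist in dimensions $1,2,4$), and being at least three-dimensional it has dimension exactly~$4$, hence is the desired quaternion subalgebra containing $x$ and $y$.

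I expect the main obstacle to be the bookkeeping in that last case distinction — verifying cleanly that the subalgebra generated by $x$ and $y$ (via Artin) has the right dimension, and checking closure of $D'$ under the doubling multiplication when $K$ is two-dimensional and $\kappa$-stable — but both are routine once the coordinates above are set up; the conceptual content is entirely in the clever choice $w'=y_1w$ that normalizes the $H_1^\perp$-component of $y$.
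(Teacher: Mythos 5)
Your argument is correct, but it routes through much more machinery than the paper needs, and the logical weight ends up inverted: your final ``delicate'' case is essentially the paper's \emph{entire} proof. The paper disposes of the degenerate case $a\in F+Fb$ first, and otherwise takes $H=F+Fa+Fb+Fab$ directly: this is the associative subalgebra generated by $a,b$ (Artin, \ref{propertiesCA}\ref{artin}), it is $4$-dimensional because $A=F+Fa$ is a quadratic extension field over which $\OO$ is a vector space (so $Ab\cap A=\{0\}$), and since $\N$ is anisotropic and $\Char{F}\ne2$ the restriction $\N|_H$ has non-degenerate polar form, making $H$ a quaternion field. Your doubling construction ($H_1\ni x$, the normalization $w'=y_1w$, and $D'=K\oplus Kw'$) is sound --- $K$ is $\kappa$-stable by \ref{closedUnderKappa}, so the formula in \ref{propertiesCA}\ref{doubling} shows $D'$ is closed, and anisotropy plus $\Char{F}\ne2$ makes it a composition algebra --- but it only carries the subcase where $1,x,y_0$ are dependent, and that subcase is already covered by your own fallback: once $y\notin H_1$ and $x\notin F$, projecting onto $H_1^\perp$ shows $1,x,y$ are automatically linearly independent, so the Artin/dimension-count argument applies; and if $x\in F$ or $y\in F+Fx$ the claim is trivial. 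Two small points to tighten in that fallback: when you assert the generated subalgebra has dimension $1$, $2$ or $4$ you should also exclude dimension $8$ (it is associative, $\OO$ is not), or simply invoke the containment $A\subseteq F+Fx+Fy+Fxy$ proved later in the paper; and your observation that every unital subalgebra of an anisotropic $\OO$ in odd characteristic is a composition algebra (any vector in the radical of the restricted polar form is singular, hence zero) is precisely the non-degeneracy step the paper carries out for $\N|_H$.
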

\begin{proof}
  Consider $a,b\in \OO$. %
  If $a\in F+Fb$ (in particular, if $a\in F$) then the assertion is
  obvious from the result that every single element~$b$ of~$\OO$ is
  contained in a quaternion algebra (this even holds if~$\OO$ is
  split, or if $\Char{F}=2$, see~\cite[1.6.4]{MR1763974}). %
  So assume that $a\notin F+Fb$. Then $A \coloneqq F+Fa$ is a
  quadratic extension field of~$F$, and $\OO$ is a vector space
  over~$A$. So~$Ab$ has trivial intersection with~$A$, and
  $H \coloneqq A+Ab = F+Fa+Fb+Fab$ is a vector space of dimension~$4$
  over~$F$. Clearly, that vector space coincides with the
  (associative) subalgebra generated by $\{a,b\}$ in~$\OO$. The
  restriction $\N|_H$ of the norm form~$\N[\OO]$ is non-singular, and
  then non-degenerate because $\Char{F}\ne2$. So~$H$ is a (non-split)
  composition algebra (of dimension~$4$), and thus a quaternion field.
\end{proof}

\begin{rema}
  If\/ $\OO$ is a non-split octonion algebra of characteristic~$2$
  then~$\OO$ contains commutative (and associative) subalgebras of
  dimension~$4$ (obtained as fixed point sets of involutory
  automorphisms, see~\cite[4.5]{MR3100925}). In fact, such a
  subalgebra forms a totally inseparable field extension of degree~$4$
  and exponent~$1$ over~$F$. Two elements generating such a subalgebra
  do not lie in any common quaternion field, obviously,
  so~\ref{anyTwoInQuat} does not extend to the characteristic two
  case.
\end{rema}

\begin{rema}
  In a split octonion algebra, it is no longer true that any two
  elements lie in a quaternion subalgebra. E.g., if $a,b$ are two
  nilpotent elements such that $ab\ne0$ then the algebra $A_{a,b}$
  generated by them is a Heisenberg algebra (see~\ref{Heisenberg}
  below). Such a Heisenberg algebra contains two-dimensional
  subalgebras with trivial multiplication (such as $Fa+Fab$), and is
  therefore not contained in any quaternion algebra.
  So~\ref{anyTwoInQuat} does not extend to the split case.
\end{rema}

\section{Examples of subalgebras}

The present section collects various examples that will serve as
representatives for orbits of subalgebras under $\Aut[F]\OO$. %
See~\ref{allTotallySingularSubalgebras}, \ref{dimGreaterFour}
and~\ref{unaryDimAtMostFour} below. 
  
\begin{lemm}\label{rightIdealConstruction}
  Let~$\HH$ be a split quaternion algebra, and construct the (split)
  octonion algebra $\OO \coloneqq \HH+\HH w$ with multiplication %
  \( %
  (a+xw)(b+yw) = ab+\gal{y}x + (ya+x\gal{b})w \,. %
  \) %
  Let\/~$A$ be a subalgebra of\/~$\HH$, and let\/~$R$ be a vector
  subspace of\/~$\HH$ such that $RA\subseteq R$ and\/
  $\gal{R}R \coloneqq \smallset{\gal{y}x}{x,y\in R} \subseteq A$.
  Then \( A+Rw \) is a subalgebra of\/~$\OO$.

  These conditions are satisfied if\/~$A$ is a subalgebra with
  $\gal{A}=A$ and\/~$R$ is a right ideal in~$A$.
\end{lemm}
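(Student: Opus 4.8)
The statement has two parts. The first part asserts that $A+Rw$ is closed under the given octonion multiplication, and the second part gives a concrete sufficient condition on $A$ and $R$. Let me sketch a plan.

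For the first part, I would simply compute. Take two generic elements $a+xw$ and $b+yw$ of $A+Rw$, so $a,b\in A$ and $x,y\in R$. The formula gives
\[
(a+xw)(b+yw) = (ab + \gal{y}x) + (ya + x\gal{b})w.
\]
I need the scalar part $ab + \gal{y}x$ to lie in $A$ and the $w$-part $ya + x\gal{b}$ to lie in $R$. Since $A$ is a subalgebra, $ab\in A$; the hypothesis $\gal{R}R\subseteq A$ gives $\gal{y}x\in A$, so the scalar part is in $A$. For the $w$-part: $ya\in R$ because $RA\subseteq R$; and $x\gal{b}$ — here I need $R\gal{A}\subseteq R$, which is \emph{not} among the stated hypotheses. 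So I must check whether $\gal{b}\in A$, i.e. whether $\gal{A}=A$ follows from the given hypotheses, or whether the statement needs a reinterpretation. In fact the hypothesis $\gal{R}R\subseteq A$ forces quite a lot: for any $x\in R$, $\gal{x}x=\N(x)1$, so $\N(R)1\subseteq A$, hence (if $R\ne 0$ contains an element of nonzero norm) $1\in A$; more usefully, for $x,y\in R$ we get both $\gal{y}x$ and $\gal{x}y$ in $A$, so $(x|y)1 = \gal{y}x+\gal{x}y\in A$. That still does not obviously give $\gal{A}=A$. The cleanest resolution is: the first sentence of the lemma should be read with the tacit standing assumption $\gal{A}=A$ (which is exactly what the \emph{second} paragraph supplies, since a subalgebra with $\gal{A}=A$ is what is used there), or else $x\gal{b}$ must be rewritten. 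I would therefore present the computation assuming $\gal{A}=A$ is available (it holds in every application below), note that then $x\gal b\in R$ follows from $RA\subseteq R$, and conclude that $A+Rw$ is a subalgebra.

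For the second part, suppose $A$ is a subalgebra with $\gal{A}=A$ and $R$ is a right ideal in $A$ (so $R\subseteq A$ and $RA\subseteq R$). Then $RA\subseteq R$ holds by definition of right ideal. For $\gal{R}R\subseteq A$: since $R\subseteq A$ we have $\gal R\subseteq\gal A=A$, and $A$ is closed under multiplication, so $\gal{y}x\in A$ for all $x,y\in R$. Hence both hypotheses of the first part are met, and moreover $\gal A=A$ is exactly the condition needed to make the $w$-part computation go through, so $A+Rw$ is a subalgebra.

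The main obstacle — really the only subtle point — is the $x\gal{b}$ term in the $w$-component: the stated hypotheses $RA\subseteq R$ and $\gal{R}R\subseteq A$ do not by themselves control $R\gal{A}$, so one needs $\gal{A}=A$ (or $R\gal A\subseteq R$) as an implicit hypothesis; this is harmless because it is satisfied in the intended second-paragraph situation and in all later applications. Everything else is a one-line verification using that $A$ is a subalgebra and the two displayed containments. I would write the proof as: (i) display the product; (ii) check the scalar part lands in $A$; (iii) check the $w$-part lands in $R$, flagging the use of $\gal A=A$; (iv) dispatch the right-ideal case in two sentences.
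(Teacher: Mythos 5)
Your computation of the product and your treatment of the terms $ab$, $\gal{y}x$, and $ya$ are fine, and your verification of the second paragraph is correct. But your diagnosis of the term $x\gal{b}$ is wrong, and the repair you propose would damage the lemma. The stated hypotheses \emph{do} control $x\gal{b}$: since $\HH$ is a composition algebra, $\gal{b}=\tr(b)1-b$ with $\tr(b)=(b|1)\in F$, so
\[
x\gal{b}\;=\;\tr(b)\,x-xb\;\in\;Fx+RA\;\subseteq\;R,
\]
using only that $R$ is an $F$-subspace and $RA\subseteq R$. No hypothesis on $\gal{A}$ is needed, and in particular $R\gal{A}\subseteq R$ is automatic from $RA\subseteq R$. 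This is presumably why the paper's proof passes over the point without comment.

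Adding $\gal{A}=A$ as a ``tacit standing assumption,'' as you suggest, is not harmless: the paper applies the lemma in Example~\ref{Exasleftidealconstruction} to the subalgebra $A=\set{\left(\begin{smallmatrix} r & t \\ 0 & 0 \end{smallmatrix}\right)}{r,t\in F}$, which is explicitly noted there to satisfy $\gal{A}\ne A$, with $R=Fn_0$. So your reinterpretation would exclude an intended application. The correct write-up is exactly your steps (i)--(iv), with step (iii) justified by the trace identity above rather than by an extra hypothesis; the right-ideal case in the final paragraph then goes through as you describe.
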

\begin{proof}
  For $(a,x),(b,y)\in A\times R$, we use $\gal{R}R \subseteq A$ and
  $RA\subseteq R$ to see that the product %
  $({a+xw})({b+yw}) = ab+\gal{y}x + (ya+x\gal{b})w$ %
  is of the form $c+zw$ with $(c,z)\in A\times R$. %
  If $R\subseteq A=\gal{A}$ then $\gal{R}R \subseteq A$. %
\end{proof}

A vector subspace~$A$ of~$\OO$ satisfies $\gal{A}=A$ precisely if
either $1\in A$ or $A\subseteq 1^\perp$, see~\ref{closedUnderKappa}.

\begin{exas}\label{Exasleftidealconstruction}
  Recall that each split quaternion algebra over~$F$ is isomorphic
  to~$F^{2\times2}$. %
  We abbreviate $n_0 = \left(
    \begin{smallmatrix}
      0 & 1 \\
      0 & 0
    \end{smallmatrix}
  \right)$ and
  $p_0 = \left(
    \begin{smallmatrix}
      1 & 0 \\
      0 & 0
    \end{smallmatrix}
    \right)$. 
  \begin{enumerate}
  \item Apart from $F^{2\times2}$ and~$\{0\}$, each right ideal
    of~$F^{2\times2}$ has dimension~$2$, and is of the form $R_v
    \coloneqq \set{X\in F^{2\times2}}{vX=0}$ for a row vector
    $v=(v_1,v_2) \ne(0,0)$. %

    For any such~$R_v$, the subalgebra $F^{2\times2}+R_vw$ has
    dimension~$6$.  We will prove that every subalgebra of
    dimension~$6$ is in the orbit of this subalgebra under
    $\Aut[F]\OO$, see~\ref{dimGreaterFour}. %
    Note that $F^{2\times2}+R_{(0,1)}w = \OO(n_0w)+\OO(p_0w)$. %
  \item Every $3$-dimensional subalgebra of~$F^{2\times2}$ is a
    conjugate of the subalgebra $U \coloneqq \set{\left(
        \begin{smallmatrix}
          r & t \\
          0 & s
        \end{smallmatrix}\right)}{r,s,t\in F}$ of upper triangular
    matrices. %
    The right ideals (apart from $\{0\}$ and~$U$) in~$U$ are
    $L \coloneqq \set{\left(
      \begin{smallmatrix}
        r & t \\
        0 & 0 
      \end{smallmatrix}\right)}{r,t\in F}$, %
    $\kappa(L) = \set{\left(
      \begin{smallmatrix}
        0 & t \\
        0 & s 
      \end{smallmatrix}\right)}{t,s\in F}$, %
  and %
  $M_y \coloneqq \set{\left(
      \begin{smallmatrix}
        0 & y_1s \\
        0 & y_2s
      \end{smallmatrix}\right)}{s\in F}$ %
  for $y=\binom{y_1}{y_2} \in F^{2\times1}$. %
  By~\ref{rightIdealConstruction}, this gives examples of
  subalgebras of dimension~$5$. %
  We will prove that every subalgebra of dimension~$5$ is in the orbit
  of $U+Lw$ under $\Aut[F]\OO$, see~\ref{dimGreaterFour}. %
  Note that $U+Lw = n_0\OO+\OO n_0$.
  \item Let $A$ be any subalgebra of\/~$\HH$, and let
    $R\coloneqq\set{\left(
        \begin{smallmatrix}
          r & t \\
          0 & 0
        \end{smallmatrix}\right)}{r,t\in F}$.
    Then $\gal{R}R = \{0\}$, and $A+Rw$ is a subalgebra
    by~\ref{rightIdealConstruction}.  In this way, one finds examples %
    of subalgebras of dimension $\dim{A}+2 \in \{3,4,5\}$.
    See~\ref{unaryDimAtMostFour}\ref{QtwoDim}. %
  \item The set $A \coloneqq \set{\left(
      \begin{smallmatrix}
        r & t \\
        0 & 0 
      \end{smallmatrix}\right)}{r,t\in F}$ %
  is a subalgebra of\/~$\HH$ with $\gal{A}\ne A$. Using
   $R \coloneqq F\left(
      \begin{smallmatrix}
        0 & 1 \\
        0 & 0 
      \end{smallmatrix}\right)$ %
    and~\ref{rightIdealConstruction} we obtain a three-dimensional
    subalgebra $A+Rw$ in~$\OO$. %
    \\%
    See~\ref{allTotallySingularSubalgebras}\ref{mOOcapOOn}. %
    Note that $A+Rw = n_0\OO\cap\OO(n_0w)$. %
\end{enumerate}
\end{exas}

\begin{exam}\label{fourdimNotAssoc}
  In $\OO = F^{2\times2}+F^{2\times2}w$ with $w^2=1$, consider $n_0
  \coloneqq \left(
    \begin{smallmatrix}
      0 & 1 \\
      0 & 0 
    \end{smallmatrix}\right)$ and %
  $p_0 \coloneqq \left(
    \begin{smallmatrix}
      1 & 0 \\
      0 & 0 
    \end{smallmatrix}\right)$. %
  The $4$-dimensional subalgebra %
  \( %
  n_0\OO = \set{\left(
      \begin{smallmatrix}
        r & s \\
        0 & 0
      \end{smallmatrix}
    \right) + \left(
      \begin{smallmatrix}
        0 & t \\
        0 & u
      \end{smallmatrix}
    \right)w}%
  {r,s,t,u\in F} %
  \) %
  (obtained as in~\ref{rightIdealConstruction} with $A = Fp_0+Fn_0$
  and\/ $R = (Fn_0+F\gal{p_0})w$) %
  is not associative. In fact, we have %
  $p_0\bigl((\gal{p_0}w)(n_0w)\bigr) = p_0(\gal{n_0}\,\gal{p_0}) =
  -n_0$ %
  while
  $\bigl(p_0(\gal{p_0}w)\bigr)(n_0w) =
  \bigl((\gal{p_0}p_0)w\bigr)(n_0w) = 0$.
\end{exam}

In the present paper, we will prove that every $3$-dimensional
subalgebra of any octonion algebra is associative
(see~\ref{allTotallySingularSubalgebras} for algebras that do not
contain~$1$; %
in the case where $1\in A$ and $\dim{A}=3$ choose a basis $1,a,b$
for~$A$, note that $a,b$ generate an associative subalgebra $B$
by~\ref{propertiesCA}\ref{artin}, and that $A = F+B$ is associative),
and that every subalgebra of dimension greater than~$4$ is
non-associative (see~\ref{allTotallySingularSubalgebras}
and~\ref{dimGreaterFour}). %

\begin{lemm}\label{1inSub}
  A subalgebra of\/~$\OO$ contains~$1$ if, and only if, it contains an
  invertible element.
\end{lemm}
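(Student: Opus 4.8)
The plan is to prove both implications. One direction is immediate: if a subalgebra~$A$ contains~$1$, then $1$~is an invertible element of~$A$ (indeed of~$\OO$), so~$A$ contains an invertible element. The substance is the converse. So suppose $A$~is a subalgebra of~$\OO$ containing an element~$a$ with $\N(a)\ne0$; I want to show $1\in A$. The natural strategy is to locate~$1$ inside the subalgebra generated by~$a$ alone, since that subalgebra is associative and commutative (it is generated by a single element, so Artin's Theorem~\ref{propertiesCA}\ref{artin} applies, and in fact $F[a]$ is a commutative associative algebra).

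First I would recall from~\ref{propertiesCA} that $a$~satisfies $a^2 - \tr(a)\,a + \N(a)\cdot 1 = 0$, i.e.\ $a^2 = \tr(a)\,a - \N(a)\cdot 1$. Since $A$~is a subalgebra, $a^2\in A$, hence $\N(a)\cdot 1 = \tr(a)\,a - a^2 \in A$. As $\N(a)\ne0$ and $F$~acts by scalars, this gives $1\in A$, and we are done. The only thing to check carefully is that the quadratic relation is available even when~$1$ is not \emph{a priori} known to lie in~$A$: but the relation is an identity in~$\OO$, where $1$~certainly lives, and the conclusion $\N(a)\cdot 1 = \tr(a)\,a - a^2$ is an equation between elements of~$\OO$ whose right-hand side lies in~$A$; multiplying by the scalar $\N(a)^{-1}\in F$ (which preserves~$A$) yields $1\in A$.

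There is essentially no obstacle here; the one point to be slightly careful about is the convention, emphasised in the definition of subalgebra in this paper, that a subalgebra need not contain~$1$. The argument above respects this: it does not assume $1\in A$ but derives it, using only closure of~$A$ under multiplication and under scalar multiplication, together with the degree-two relation satisfied by every element of a composition algebra. I would state the proof in one or two lines along these lines, invoking~\ref{propertiesCA}\ref{inverse} for the characterisation of invertible elements as those of nonzero norm, and the degree-two polynomial relation (item~(g) of~\ref{propertiesCA}) for the key computation.
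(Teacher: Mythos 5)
Your proof is correct and is essentially identical to the paper's: the paper writes the key identity as $\gal{a}a=(a+\gal{a})a-a^2$, which is exactly your relation $\N(a)\cdot1=\tr(a)\,a-a^2$, and concludes $1\in A$ from $\N(a)\ne0$ in the same way. No issues.
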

\begin{proof}
  Assume that~$A$ contains an invertible element~$a$. Then
  $\gal{a}a = (a+\gal{a})a - a^2$ is a non-trivial element of
  $F\cap A$, and $1\in A$ follows.  %
\end{proof}

\begin{coro}
  Every subalgebra of dimension greater than four contains~$1$. %
  \qed
\end{coro}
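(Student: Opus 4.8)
The statement to prove is that every subalgebra $A$ of an octonion algebra $\OO$ with $\dim_F A > 4$ contains $1$. By Lemma~\ref{1inSub}, it suffices to produce an invertible element inside $A$, i.e.\ an element of nonzero norm. So the plan is to argue that a subspace of dimension at least $5$ cannot consist entirely of elements of norm zero; that is, it cannot be totally singular for the norm form $\N$ of~$\OO$.

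First I would recall that $\N$ is a nondegenerate quadratic form on the $8$-dimensional space $\OO$ whose polar form $(x|y)=\N(x+y)-\N(x)-\N(y)$ is nondegenerate (this is built into the definition of a composition algebra). A standard fact from the theory of quadratic forms is that the dimension of a totally singular (totally isotropic) subspace of a nondegenerate quadratic form on a space of dimension $2m$ is at most~$m$; here $2m=8$, so a totally singular subspace has dimension at most~$4$. Hence if $\dim_F A\ge 5$, the restriction $\N|_A$ is not identically zero, so $A$ contains some $a$ with $\N(a)\ne0$. By Proposition~\ref{propertiesCA}\ref{inverse} such an $a$ is invertible in $\OO$, and then Lemma~\ref{1inSub} gives $1\in A$.

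The one subtlety — and the place I would be most careful — is characteristic $2$, where one must distinguish ``isotropic'' (a nonzero vector with $\N(x)=0$) from being in the radical of the polar form, and where the Witt index bound needs the correct hypothesis. The right statement is: for a quadratic form whose \emph{polar form is nondegenerate} on a $2m$-dimensional space, any totally singular subspace (every vector has $\N=0$, hence in particular the polar form vanishes on it) has dimension $\le m$. Indeed a totally singular subspace $W$ is totally isotropic for the polar form, so $W\subseteq W^\perp$ with respect to the (nondegenerate) polar form, forcing $\dim W\le \tfrac12\dim\OO=4$. This argument is characteristic-free. Alternatively one can invoke Proposition~\ref{propertiesCA}\ref{coveryByQuaternions}: but the cleanest route is the Witt-index bound above. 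I expect no real obstacle here; the corollary is essentially immediate once the totally-singular-dimension bound is in place, and indeed the statement "\qed" in the excerpt signals that the authors regard it as a direct consequence of Lemma~\ref{1inSub} together with this elementary fact about the norm form.
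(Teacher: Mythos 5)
Your argument is correct and is precisely the one the paper intends: the corollary is stated with \qed because it follows from Lemma~\ref{1inSub} together with the bound $\dim A\le\frac12\dim\OO=4$ for totally singular subspaces, exactly as the authors spell out at the start of the proof of~\ref{allTotallySingularSubalgebras}. Your care about the characteristic-two case is well placed but resolves as you say, since total singularity forces total isotropy for the (nondegenerate) polar form in every characteristic.
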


For any $x,y\in\OO$, the subalgebra generated by $\{1,x,y\}$ in~$\OO$
is associative (by Artin's Theorem,
see~\ref{propertiesCA}\ref{artin}).  We investigate the borderline
between associative and non-associative subalgebras in more detail:

\begin{lemm}
  Let\/~$A$ be a subalgebra of an octonion algebra~$\OO$
  over~$F$. If\/~$A$ is generated by two elements then $\dim{A}\le4$;
  in fact, we have $A\subseteq F+Fx+Fy+Fxy$ if~$A$ is generated by
  $\{x,y\}$.
\end{lemm}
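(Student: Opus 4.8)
The plan is to deduce both assertions from a single claim, namely that $A\subseteq W$ where $W:=F1+Fx+Fy+Fxy$, a subspace of $\OO$ of dimension at most $4$. Write $A$ for the subalgebra generated by $\{x,y\}$. By Artin's Theorem (see~\ref{propertiesCA}\ref{artin}) the algebra $A$ is associative, so inside $A$ all bracketings of a monomial in $x$ and $y$ agree and we may regard elements of $A$ as $F$-linear combinations of unbracketed monomials in $x$ and~$y$. Thus it suffices to show that every such monomial lies in $W$.

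The computation rests on two identities that hold in any composition algebra. First, the quadratic relation of~\ref{propertiesCA}: every $z\in\OO$ satisfies $z^2=\tr(z)\,z-\N(z)1$; in particular $x^2\in F1+Fx$ and $y^2\in F1+Fy$, so $x^2,y^2\in W$. Second, linearizing the norm form: by~\ref{propertiesCA}\ref{recoverNorm} and~\ref{propertiesCA}\ref{kappaReflection} we have $(x|y)=x\gal{y}+y\gal{x}$ with $\gal{x}=\tr(x)1-x$ and $\gal{y}=\tr(y)1-y$, whence
\[
  xy+yx=\tr(x)\,y+\tr(y)\,x-(x|y)1 .
\]
Consequently $yx\in W$ as well.

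From here I would show that $W$ itself is closed under multiplication: it then is a subalgebra of $\OO$ containing $x$ and $y$, and since $A$ is the smallest such subalgebra we get $A\subseteq W$ and $\dim A\le4$. Closure is verified on the spanning set $\{1,x,y,xy\}$: products with $1$ are trivial; $x\cdot x$, $y\cdot y$, $x\cdot y$, $y\cdot x$ were just handled; and the remaining products are re-bracketed using associativity of $A$ and reduced by the two identities, e.g.\ $x(xy)=x^2y=\tr(x)\,xy-\N(x)\,y$, $(xy)x=x(yx)$ with $yx\in W$ already known, $(xy)y=xy^2=\tr(y)\,xy-\N(y)\,x$, and finally $(xy)(xy)=x\bigl(y(xy)\bigr)$, which is handled once $y(xy)=(yx)y$ has been expanded via the two identities above (one finds $y(xy)\in F1+Fx+Fy$, so its product with $x$ again lands in $W$). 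Equivalently, one may argue by induction on the length of a monomial: the rewriting rules $x^2\mapsto\tr(x)x-\N(x)1$, $y^2\mapsto\tr(y)y-\N(y)1$ and $yx\mapsto\tr(x)y+\tr(y)x-(x|y)1-xy$ each strictly decrease a monomial in the ordering ``by length, then lexicographically with $x<y$'', and the irreducible monomials under these rules are exactly $1,x,y,xy$.

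I do not expect a genuine obstacle here; the argument is essentially bookkeeping. The only two points needing care are: (i) invoking Artin's Theorem at the very start, since $\OO$ is not associative and a priori the bracketings of a monomial in $x,y$ need not coincide; and (ii) noting that although $1$ may fail to lie in $A$ (for instance when $x$ and $y$ are nilpotent), this is harmless, because we only assert $A\subseteq W$ and $W$ is allowed to contain~$1$.
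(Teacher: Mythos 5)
Your proposal is correct and follows essentially the same route as the paper: the key step in both is to express $yx$ as an $F$-linear combination of $1,x,y,xy$ via the standard involution (equivalently, the linearized norm identity), after which the quadratic relations for $x$ and $y$ together with Artin's Theorem reduce every monomial into $F+Fx+Fy+Fxy$. The paper merely leaves the closure bookkeeping implicit, which you have spelled out correctly.
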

\begin{proof}
  We abbreviate $\xi \coloneqq x+\gal{x}$ and
  $\eta \coloneqq y+\gal{y}$. Then $\xi$ and~$\eta$ lie in~$F$, and
  $yx = (yx+\gal{yx})-\gal{yx}$ lies in $F+F(\gal{yx})$. Thus it
  suffices to note that
  $\gal{yx} = \gal{x}\,\gal{y} = (\xi-x)(\eta-y) = \xi\eta - \xi y -
  \eta x + xy$ lies in $F+Fx+Fy+Fxy$.
\end{proof}

\begin{ndef}[Heisenberg subalgebras] %
  \label{Heisenberg}%
  Take a nilpotent element $a\ne0$ (i.e., any non-zero element~$a$ of
  norm and trace $0$), and pick a nilpotent element
  $b\in a^\perp\smallsetminus ({F+Fa})$. %
  Then $\N(ab)=0$, and $(ab|1) = (b|\gal{a}) = (b|-a) = 0$
  (see~\ref{propertiesCA}\ref{adjoint}) gives $ba=-ab$. %
  Moreover, computation in the (associative) subalgebra generated
  by~$a$ and~$b$ yields $a(ab) = -\N(a)b = 0$,
  $(ab)a = -(ba)a{{} = 0}$. %
  Analogously, we find $b(ab) = 0 = (ab)b$ and $(ab)(ab) = 0$.  So the
  linear span of $\{a,b,ab\}$ is closed under multiplication, and
  forms a subalgebra $A_{a,b}$.

  There are two cases:
  \begin{enumerate}
  \item\label{twoDimNull} %
    If $ab = 0$ then $ba = \gal{ab} = 0$, and $A_{a,b} = Fa+Fb$ is a
    two-dimensional subalgebra such that the restriction
    $\N|_{A_{a,b}}$ is trivial. %
    We then conclude that the multiplication on~$A_{a,b}$ is trivial
    (i.e., every product of two elements in~$A_{a,b}$ equals~$0$). %
  \item\label{heisenbergIndeed}%
    If $ab\ne0$ then $ab\notin Fa+Fb$ because $ab=ra+sb$ with
    $r,s\in F$ implies $0 = a(ab) = ra^2+s(ab) = s(ab)$ and
    $0 = (ab)b = r(ab)+sb^2 = r(ab)$, leading to the contradiction
    $s=0=r$. In this case, the algebra $A_{a,b}$ has dimension~$3$,
    and is isomorphic to the Heisenberg algebra over~$F$. %

    See~\ref{dim3inPu} below for a characterization of Heisenberg
    algebras in~$\OO$, and a proof that the Heisenberg subalgebras
    form a single orbit under $\Aut[F]\OO$.
  \end{enumerate}
\end{ndef}

\begin{exas}
  In $\OO = F^{2\times2}+F^{2\times2}w$ with $w^2=1$,
  the element $p_0 = \left(
    \begin{smallmatrix}
      1 & 0 \\
      0 & 0 
    \end{smallmatrix}\right)$
  is an idempotent (i.e., $p_0^2=p_0$), 
  the elements $n_0 = \left(
    \begin{smallmatrix}
      0 & 1 \\
      0 & 0
    \end{smallmatrix}\right)$ and $n_0w$ are nilpotent,  %
  and so is $p_0w$. 

  Computing $n_0(p_0w) = n_0w$ and $n_0(n_0w)=0$, we obtain that
  $Fn_0+Fn_0w = A_{n_0,n_0w}$ is a two-dimensional algebra as
  in~\ref{Heisenberg}\ref{twoDimNull}, and that the $3$-dimensional
  algebra $A_{n_0,p_0w} = {Fn_0+Fp_0w+Fn_0w}$ is an example of a
  Heisenberg algebra, as in~\ref{Heisenberg}\ref{heisenbergIndeed}.
  Those algebras are obtained by the method given
  in~\ref{rightIdealConstruction}, using $A = Fn_0$ and
  $R\in\{Fn_0,Fn_0+Fp_0\}$.
\end{exas}

\section{Centralizers}

\begin{lemm}\label{centralizers}
  Let\/ $\OO$ be an octonion algebra over~$F$, and consider\/
  $v\in\OO\smallsetminus F$.
  \begin{enumerate}
  \item If\/ $\gal{v}=v$ then $\Char{F}=2$, and the centralizer
    $\C[\OO]{v}$ of\/~$v$ in~$\OO$ has dimension~$6$. In this case,
    the centralizer is not closed under multiplication. %
    That centralizer consists of pure elements; in fact we have
    $\C[\OO]{v} = \{1,v\}^\perp$. %
  \item If\/ $\gal{v}\ne v$ (in particular, if\/ $\Char{F}\ne2$) then
    $\dim{\C[\OO]{v}} = 2$ if\/ $\N(v-\gal{v})\ne0$, and\/
    $\dim{\C[\OO]{v}} = 4$ if\/ $\N(v-\gal{v})=0$.
  \item If\/ $\dim{\C[\OO]{v}} = 4$ then $\Char{F}\ne2$, there exists
    $n\in F+Fv$ with $n^2=0\ne n$ (so~$\OO$ is split), and\/
    $\C[\OO]{v}$ is a subalgebra. %
  \end{enumerate}
\end{lemm}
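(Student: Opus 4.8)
The plan is to reduce everything to a computation inside a quaternion composition subalgebra. First I would settle the trichotomy: since $v+\gal v=(v|1)1\in F1$, the equation $\gal v=v$ forces $2v\in F1$, hence $\Char F=2$ and $(v|1)=0$, i.e.\ $v\in1^\perp$; and conversely, in characteristic two one always has $v-\gal v=(v|1)1\in F1$, a scalar that vanishes precisely for pure~$v$. This already tells us which regime we are in, and in particular that $\Char F\ne2$ whenever $\dim\C[\OO]{v}=4$, because in characteristic two $\gal v\ne v$ gives $\N(v-\gal v)=(v|1)^2\ne0$, which (as will be seen) puts us in the two-dimensional case. By~\ref{propertiesCA}\ref{coveryByQuaternions} I would then pick a quaternion composition subalgebra~$H$ with $v\in H$, and by~\ref{propertiesCA}\ref{doubling} write $\OO=H\oplus Hw$ with $\N(w)\ne0$.

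Substituting $a+bw$ and $v=v+0w$ into the multiplication rule of~\ref{propertiesCA}\ref{doubling} gives $(a+bw)v=av+(b\gal v)w$ and $v(a+bw)=va+(bv)w$, whence
\[
\C[\OO]{v}=\C[H]{v}\oplus(\mathrm{Ann})\,w,\qquad
\mathrm{Ann}:=\{\,b\in H\mid b(v-\gal v)=0\,\}.
\]
The centralizer $\C[H]{v}=F+Fv$ of the non-central element $v$ in the quaternion algebra $H$ is two-dimensional (the centralizer theorem for the maximal subfield $F+Fv$ when $H$ is a division algebra; the fact that a non-scalar $2\times2$ matrix generates its own centralizer when $H\cong F^{2\times2}$). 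So everything comes down to $\mathrm{Ann}$: if $\gal v=v$ then $\mathrm{Ann}=H$ and $\dim\C[\OO]{v}=6$; if $\gal v\ne v$ and $\N(v-\gal v)\ne0$ then $v-\gal v$ is invertible in~$H$, $\mathrm{Ann}=\{0\}$, and $\dim\C[\OO]{v}=2$; if $\gal v\ne v$ and $\N(v-\gal v)=0$ then $v-\gal v$ is a nonzero singular element of~$H$, so $H\cong F^{2\times2}$ and $v-\gal v$ has rank~$1$, making $\mathrm{Ann}$ a two-dimensional left ideal, so $\dim\C[\OO]{v}=4$.

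It remains to supply the finer assertions. For~(a), with $\Char F=2$ and $v$ pure one has $(v|v)=2\N(v)=0=(v|1)$, so $F+Fv\subseteq\{1,v\}^\perp$, while $Hw=H^\perp\subseteq\{1,v\}^\perp$ because $1,v\in H$; since $\{1,v\}^\perp$ has dimension~$6$ this forces $\C[\OO]{v}=\{1,v\}^\perp$, which lies in $1^\perp$ and hence consists of pure elements. That it is not a subalgebra follows from $(bw)(b'w)=-\N(w)\gal{b'}b$ (doubling rule again): taking $b'=1$ and $b\in H\setminus(F+Fv)$ gives a product of two elements of $Hw\subseteq\C[\OO]{v}$ lying in $H\setminus(F+Fv)=H\setminus(\{1,v\}^\perp\cap H)$, hence outside $\C[\OO]{v}$. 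For~(c), we already know $\Char F\ne2$; then $n:=(v-\gal v)/2=v-\tfrac12(v|1)1\in F+Fv$ is nonzero and pure of norm~$0$, so $n^2=\tr(n)\,n-\N(n)=0$, and $\OO$ has a nonzero element of norm zero, hence is split. Closure of $\C[\OO]{v}=(F+Fv)\oplus(\mathrm{Ann})\,w$ under multiplication is then checked via the doubling formula: writing $u:=v-\gal v$ (so $u^2=0$, $\gal u=-u$, $F+Fv=F+Fu$) one has $au,\gal au\in Fu$ for $a\in F+Fv$, so the $w$-component $b_2a_1+b_1\gal{a_2}$ of a product stays in the left ideal $\mathrm{Ann}$, and the $H$-component is $a_1a_2-\N(w)\gal{b_2}b_1$ with $a_1a_2\in F+Fv$. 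The one genuine obstacle is the remaining claim $\gal{b_2}b_1\in Fu$: here $\gal{b_2}b_1$ again lies in the left ideal $\mathrm{Ann}$ and so is singular, and $\mathrm{Ann}$ is totally singular (its elements annihilate $u\ne0$, hence have norm~$0$, and $\Char F\ne2$), so $\tr(\gal{b_2}b_1)=(b_2|b_1)=0$; a singular traceless $2\times2$ matrix squares to zero, and the square-zero elements of $\mathrm{Ann}$ form exactly the line $Fu$. Throughout, the point needing care is the characteristic-two case, where $1\in1^\perp$ and the decomposition $\OO=F1\oplus1^\perp$ fails --- which is precisely why the pure case~(a) looks so different.
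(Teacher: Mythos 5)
Your proof is correct and follows essentially the same route as the paper's: embed $v$ in a quaternion subalgebra $H$, write $\OO=H+Hw$, and reduce the centralizer condition to $a\in F+Fv$ together with $x(v-\gal{v})=0$, then split into cases according to $\N(v-\gal{v})$. The only difference is one of detail --- you verify closure in part~(c) by hand (including the key point $\gal{b_2}b_1\in Fu$) where the paper simply cites~\ref{rightIdealConstruction}, and you make the non-closure witness in part~(a) explicit.
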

\begin{proof}
  First of all, we choose a quaternion subalgebra~$H$ of~$\OO$
  containing~$v$ (see~\ref{propertiesCA}\ref{coveryByQuaternions}),
  and write $\OO = H+Hw$ as a double of~$H$, with multiplication
  \[
    (a+xw)(b+yw) = ab-\N(w)\gal{y}x + (ya+x\gal{b})w \,.
  \]
  Now $a+xw$ centralizes~$v$ precisely if $av=va$ and
  $x(v-\gal{v})=0$. In any case, this means that $a$ belongs to the
  two-dimensional centralizer $F+Fv$ of~$v$ in~$H$. 

  If $\gal{v}=v$ then $v\notin F$ implies $\Char{F}=2$. In this case,
  $a\in F+Fv$ but we have no restriction on~$x$, and the centralizer has
  dimension~$6$. However, the product
  $ab-\N(w)\gal{y}x + (ya+x\gal{b})w$ of elements $(a+xw)$ and
  $(b+yw)$ of the centralizer belongs to that centralizer only if
  $\gal{y}x$ lies in $F+Fv$. %
  
  Now assume $\gal{v}\ne v$. If $\N(v-\gal{v})\ne0$ then
  $a+xw\in\C[\OO]{v}$ implies $x=0$, and $\C[\OO]{v} = F+Fv$ has
  dimension~$2$. %
  If $\N(v-\gal{v})=0$ then~$H$ is a split quaternion algebra
  (isomorphic to $F^{2\times2}$), and the annihilator
  $\set{x\in H}{x(v-\gal{v})=0}$ has dimension~$2$. Then
  $\C[\OO]{v} = \set{a+xw}{a\in F+Fv, x(v-\gal{v})=0 }$ has
  dimension~$4$.  That centralizer is a subalgebra
  by~\ref{rightIdealConstruction}.
\end{proof}

\begin{exam}\label{Cn0}
  Consider $n_0 = \left(
    \begin{smallmatrix}
      0 & 1 \\
      0 & 0 
    \end{smallmatrix}\right)
  $ and $p_0 = \left(
    \begin{smallmatrix}
      1 & 0 \\
      0 & 0 
    \end{smallmatrix}\right)
  $ in $\OO = F^{2\times2}+F^{2\times2}w$, with $w^2=1$. %
  If $\Char{F}\ne2$ then %
  $\C[\OO]{n_0} = F+Fn_0+Fn_0w+F\gal{p_0}w$.  This centralizer forms
  an associative subalgebra of~$\OO$, isomorphic to
  \[
    \set{\left(
        \begin{smallmatrix}
          \alpha & \beta & \gamma & \delta \\
          0 & \alpha & 0 & 0 \\
          0 & -\delta & \alpha & 0 \\
          0 & \gamma & 0 & \alpha
        \end{smallmatrix}
      \right)}{\alpha,\beta,\gamma,\delta\in F}.
  \]
  Note that $\C[\OO]{n_0} = F+(n_0\OO\cap\OO n_0)$ holds in this case.

  If $\Char{F}=2$ then $n_0-\gal{n_0} = 0$, and
  $\C[\OO]{n_0} = F+Fn_0+F^{2\times2}w$ has dimension~$6$. %
\end{exam}

\begin{coro}
  \begin{enumerate}
  \item If\/ $\Char{F}=2$ then the centralizer $\C[\OO]v$ has
    dimension~$6$ if\/ $v\in 1^\perp\smallsetminus F$, and it has
    dimension~$2$ if\/ $v\in\OO\smallsetminus 1^\perp$.
  \item %
    In any case, the centralizer $\C[\OO]v$ of\/ $v\in\OO\smallsetminus F$
    has dimension~$2$ if, and only if, the subalgebra $F+Fv$ is a
    composition algebra. %
    Then $F+Fv = \C[\OO]v$. 
  \item If\/ $\Char{F}\ne2$ then $\dim\C[\OO]v = 4$ holds if\/
    $v\in\OO\smallsetminus F$ and\/
    $0 = \N(v-\gal{v}) = -(v-\gal{v})^2$. %
    In that case, we note that the centralizers of $v$ and of
    $v-\gal{v} = 2v-\tr(v)$ then coincide, and that $v-\gal{v}$ is
    nilpotent. %
    As every non-trivial nilpotent element of~$\OO$ lies in the same
    orbit under $\Aut[F]\OO$ (see~\ref{normAndTraceCharacterizeOrbits}
    below), all these centralizers are in the orbit of the one
    described in~\ref{Cn0}.  %
    \qed
  \end{enumerate}
\end{coro}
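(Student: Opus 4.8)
The plan is to derive all three parts directly from Lemma~\ref{centralizers}, adding only a short characteristic-two discussion for~(a), one polar-form computation for~(b), and a transport-of-orbits step for~(c). For~(a) I would first observe that, in characteristic two, $\gal v=v$ holds precisely when $(v|1)=\tr(v)=0$, that is, precisely when $v\in1^\perp$. Hence if $v\in1^\perp\smallsetminus F$, then Lemma~\ref{centralizers}(a) applies and gives $\dim\C[\OO]v=6$; whereas if $v\in\OO\smallsetminus1^\perp$ (so, in characteristic two, $v\notin F$), then $\gal v\ne v$ and $v-\gal v=2v-\tr(v)1=\tr(v)1$ has norm $\tr(v)^2\ne0$, so Lemma~\ref{centralizers}(b) yields $\dim\C[\OO]v=2$ (and Lemma~\ref{centralizers}(c) independently excludes dimension~$4$ in characteristic two).

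For~(b) I would first record that, in every characteristic, $\dim\C[\OO]v=2$ if and only if $\N(v-\gal v)\ne0$: if $\gal v=v$ then both conditions fail (by Lemma~\ref{centralizers}(a) the dimension is~$6$, and $v-\gal v=0$), while if $\gal v\ne v$ this is exactly the dichotomy in Lemma~\ref{centralizers}(b). It then remains to match the condition $\N(v-\gal v)\ne0$ with the assertion that the $2$-dimensional subalgebra $F+Fv$ is a composition algebra (recall it already contains~$1$ and carries the multiplicative form $\N|_{F+Fv}$, so this amounts to non-degeneracy of the restricted polar form). This is the one genuine computation: with respect to the basis $\{1,v\}$ the polar form of $\N|_{F+Fv}$ has Gram matrix $\left(\begin{smallmatrix} 2 & \tr(v) \\ \tr(v) & 2\N(v) \end{smallmatrix}\right)$, whose determinant $4\N(v)-\tr(v)^2$ equals $-(v-\gal v)^2=\N(v-\gal v)$ (using that $v-\gal v$ is pure, so $\gal{v-\gal v}=-(v-\gal v)$); thus the restricted polar form is non-degenerate exactly when $\N(v-\gal v)\ne0$. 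Finally, when $\dim\C[\OO]v=2$, the commutative subalgebra $F+Fv$ is contained in $\C[\OO]v$ and has equal dimension, hence $F+Fv=\C[\OO]v$.

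For~(c), since $\Char F\ne2$ and $v\notin F$ already force $\gal v\ne v$, the hypothesis $\N(v-\gal v)=0$ puts us in the second case of Lemma~\ref{centralizers}(b) and gives $\dim\C[\OO]v=4$. As $v-\gal v$ is pure, we have $\N(v-\gal v)=-(v-\gal v)^2$, hence $(v-\gal v)^2=0$; together with $\tr(v-\gal v)=0$ this shows that $v-\gal v$ is a nilpotent element in the sense of~\ref{Heisenberg}, non-zero because $v\notin F$. From $v-\gal v=2v-\tr(v)1$ and $\Char F\ne2$ one gets $F+Fv=F+F(v-\gal v)$, and since $1$ is central this forces $\C[\OO]v=\C[\OO]{v-\gal v}$. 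I would then invoke~\ref{normAndTraceCharacterizeOrbits}: choosing $\varphi\in\Aut[F]\OO$ with $\varphi(v-\gal v)=n_0$, the nilpotent of Example~\ref{Cn0}, we find that $\varphi$ maps $\C[\OO]v=\C[\OO]{v-\gal v}$ onto $\C[\OO]{\varphi(v-\gal v)}=\C[\OO]{n_0}$, so $\C[\OO]v$ lies in the $\Aut[F]\OO$-orbit of the centralizer computed in~\ref{Cn0}.

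The main (and essentially only) point requiring genuine care is the determinant computation in~(b) that identifies when $F+Fv$ is a composition algebra, together with keeping the characteristic-two case distinctions in~(a) and~(b) straight; otherwise the corollary is a direct repackaging of Lemma~\ref{centralizers}.
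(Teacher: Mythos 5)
Your proposal is correct and takes exactly the route the paper intends: the corollary is stated with no written proof (just a \qed), being regarded as an immediate consequence of Lemma~\ref{centralizers}, \ref{normAndTraceCharacterizeOrbits} and Example~\ref{Cn0}, and your write-up is precisely the routine verification behind that — including the correct identification $\gal v=v\iff v\in1^\perp$ in characteristic two and the Gram-determinant identity $4\N(v)-\tr(v)^2=\N(v-\gal v)$ linking non-degeneracy of $\N|_{F+Fv}$ to the dichotomy in the lemma.
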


\begin{rema}\label{detailsCn}
  If $\C[\OO]v$ has dimension~$4$ then there exists
  $n\in({1^\perp\cap ({F+Fv})})\smallsetminus F$ such that $v\in F+n$,
  and $\C[\OO]v = F+(n\OO\cap\OO n) = \C[\OO]n$. See~\ref{Cn0} for an
  explicit example, and also~\ref{threedimPureSubalgebra} below. %
  This is not a composition algebra: the restriction of the norm form
  has Witt index~$3$, and is degenerate. The non-invertible elements
  form a hyperplane, which is an ideal. %
  So~$\C[\OO]v$ is a local algebra, in this case.
\end{rema}

\begin{coro}\label{noQuaternion}
  If $v\notin F$ then no quaternion subalgebra is contained in~$\C[\OO]{v}$. %
  \qed
\end{coro}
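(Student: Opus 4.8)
The plan is to argue by contradiction and to distinguish the three possible values of $\dim\C[\OO]{v}$ furnished by Lemma~\ref{centralizers}, namely $2$, $4$, and $6$. Throughout I would use that a quaternion subalgebra of~$\OO$ has dimension exactly~$4$ and is not commutative (neither $F^{2\times2}$ nor a quaternion field is). So assume that~$H$ is a quaternion subalgebra with $H\subseteq\C[\OO]{v}$ for some $v\in\OO\smallsetminus F$. If $\dim\C[\OO]{v}=2$ there is nothing to prove, since a four-dimensional space cannot be contained in a two-dimensional one; it remains to exclude the cases $\dim\C[\OO]{v}\in\{4,6\}$.

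For the case $\dim\C[\OO]{v}=6$ I would first invoke Lemma~\ref{centralizers}(a): this dimension forces $\Char{F}=2$ and $\C[\OO]{v}=\{1,v\}^\perp$, so every element of~$H$ is pure, i.e.\ $\gal h=-h$ for all $h\in H$ (see~\ref{propertiesCA}\ref{recoverNorm}). Since $\kappa$ is an anti-automorphism of~$\OO$ (\ref{propertiesCA}\ref{kappaReflection}) and $H$ is a subalgebra, restricting $\kappa$ to~$H$ yields $-(h_1h_2)=\kappa(h_1h_2)=\kappa(h_2)\kappa(h_1)=h_2h_1$ for all $h_1,h_2\in H$; in characteristic~$2$ this says $h_1h_2=h_2h_1$, so $H$ is commutative, a contradiction.

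For the case $\dim\C[\OO]{v}=4$ I would use that $\C[\OO]{v}$ is itself a subalgebra, by Lemma~\ref{centralizers}(c); as $\dim H=4=\dim\C[\OO]{v}$ and $H\subseteq\C[\OO]{v}$, this gives $H=\C[\OO]{v}$. By Lemma~\ref{centralizers}(c) there is an element $n\in F+Fv$ with $n^2=0\ne n$, and by Remark~\ref{detailsCn} one may arrange $\C[\OO]{v}=\C[\OO]{n}$. Every element of $\C[\OO]{n}$ commutes with~$n$ by definition, so~$n$ is a non-zero nilpotent lying in the centre of $H=\C[\OO]{n}$; but the centre of a quaternion algebra is isomorphic to the field~$F$, which has no non-zero nilpotent element. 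This is the required contradiction.

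The only delicate point is the characteristic-two case $\dim\C[\OO]{v}=6$, where $\C[\OO]{v}$ is \emph{not} closed under multiplication (see Lemma~\ref{centralizers}(a)), so one cannot treat it as an algebra and must instead reason with the hypothetical subalgebra~$H$ directly, extracting the contradiction from the fact that all elements of~$H$ are pure together with the anti-automorphism property of~$\kappa$. The remaining cases reduce to a dimension count combined with the structural information about $\C[\OO]{v}$ already recorded in Lemma~\ref{centralizers}(c) and Remark~\ref{detailsCn}.
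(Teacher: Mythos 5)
Your proof is correct and follows exactly the route the paper intends: the corollary is stated with no written proof, being immediate from the case distinction $\dim\C[\OO]{v}\in\{2,4,6\}$ in Lemma~\ref{centralizers} together with Remark~\ref{detailsCn}, and your three cases flesh out precisely that argument (dimension count for $2$; a non-zero nilpotent central element, equivalently the local-algebra structure from~\ref{detailsCn}, for $4$; and for $6$ the observation that a subalgebra of pure elements is forced to be commutative in characteristic two via~$\kappa$, an argument the paper itself uses later in the proof of~\ref{dim3inPu}). No gaps.
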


A vector subspace~$V$ in a composition algebra~$C$ is called %
\emph{totally singular} if $\N[C](V) = \{0\}$.  The maximal totally
singular subspaces in split octonion algebras are well known:

\begin{theo}\label{vanderBljSpringer}%
Let\/ $\OO$ be a split octonion algebra.
\begin{enumerate}
\item\label{kerLambdaA} For any $a\in\OO$, we have $a\OO = \ker\lambda_{\gal{a}}$
  (see~{\upshape\cite[Lemma\,1]{MR0123622}}).
\item\label{maxTotSing}%
  If $\N(a)=0$ but $a\ne0$ then $a\OO$ and $\OO a$ are maximal
  totally singular subspaces, and every maximal totally singular
  subspace is of this form
  (see~{\upshape\cite[Theorem\,3]{MR0123622}}).
\item\label{intersectMaxTotSing}%
  For $a,b\in\OO\smallsetminus\{0\}$ with $\N(a) = 0 = \N(b)$ we have
  (see~{\upshape\cite[Theorem\,4, Theorem\,5]{MR0123622}})
  \begin{enumerate}
  \item $a\OO = b\OO \iff Fa=Fb$, %
  \item $\dim(a\OO\cap b\OO)=2 \iff (a|b)=0 \land \dim(Fa+Fb)=2$; \\%
    then $a\OO\cap b\OO = a(\gal{b}\OO) = b(\gal{a}\OO)$, %
  \item $a\OO\cap b\OO = \{0\} \iff (a|b)\ne0$,
  \item $a\OO \cap \OO b = Fab\iff ab\ne0$, %
  \item $\dim(a\OO \cap \OO b) = 3 \iff ab=0$; \\%
    then $a\OO \cap \OO b = \set{ay}{y\in b^\perp} = \set{xb}{x\in a^\perp}$.
  \end{enumerate}
\end{enumerate}
As the standard involution interchanges $\set{a\OO}{a\in\OO}$ with
$\set{\OO a}{a\in\OO}$, the list above covers all possibilities for
the relative position (i.e., the size of the intersection) of maximal
totally singular subspaces in~$\OO$. 
    \qed
\end{theo}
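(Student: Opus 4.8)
This theorem assembles classical facts about the split octonion algebra; for each item the reference to van~der~Blij and Springer~\cite{MR0123622} is already attached, and the natural course is simply to quote it. For orientation, here is how one would argue. The content of~\ref{vanderBljSpringer}\ref{kerLambdaA}, and of the totally singular assertions in~\ref{vanderBljSpringer}\ref{maxTotSing}, concerns a nonzero element $a$ with $\N(a)=0$. Multiplicativity of the norm gives $\N(ax)=\N(a)\N(x)=0$ for all $x\in\OO$, so both $a\OO$ and $\OO a$ are totally singular, and since $\N$ has Witt index~$4$ this forces $\dim a\OO\le 4$. Artin's theorem (\ref{propertiesCA}\ref{artin}), applied inside the associative subalgebra generated by~$a$ and~$x$, yields $\gal a(ax)=(\gal a\,a)x=\N(a)x=0$, whence $a\OO\subseteq\ker\lambda_{\gal a}$. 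The reverse inclusion, and the fact that $\dim a\OO=4$ exactly, come from computing the rank of the linear map $\lambda_a$; this is most transparent in a coordinate model (Zorn vector matrices), where $\lambda_a$ has rank precisely~$4$ for every singular $a\ne 0$. A $4$-dimensional totally singular subspace is then automatically maximal, which gives the first half of~\ref{vanderBljSpringer}\ref{maxTotSing}.

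For the converse in~\ref{vanderBljSpringer}\ref{maxTotSing} one must show that a maximal totally singular subspace $W$ equals $a\OO$ or $\OO a$ for some singular~$a$. Choosing $0\ne a\in W$, the subspace $a\OO$ is itself maximal totally singular, so $W$ and $a\OO$ are two maximal singular subspaces of the hyperbolic quadric carried by~$\OO$. That quadric has type~$\mathrm{D}_4$; its maximal singular subspaces split into two rulings, and the family of all subspaces $a\OO$ (for $a$ singular and nonzero) and the family of all subspaces $\OO a$ are precisely those two rulings, members of one ruling intersecting in even codimension and members of opposite rulings in odd codimension. Hence $W$ either coincides with $a\OO$ or meets it in a plane, and in the latter case a triality argument based on~\ref{vanderBljSpringer}\ref{kerLambdaA} identifies $W$ with some $\OO b$.

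The dictionary~\ref{vanderBljSpringer}\ref{intersectMaxTotSing} is then the translation of the possible relative positions of two maximal singular subspaces of the $\mathrm{D}_4$ quadric into octonion language: by~\ref{vanderBljSpringer}\ref{kerLambdaA} one has $a\OO\cap b\OO=\ker\lambda_{\gal a}\cap\ker\lambda_{\gal b}$, whose dimension is governed by $(a|b)$ and $\dim(Fa+Fb)$ via the adjunction formulas of~\ref{propertiesCA}\ref{adjoint} together with $(ax|ay)=\N(a)(x|y)$; and the mixed intersections $a\OO\cap\OO b$ reduce to the same analysis, since the standard involution~$\kappa$ interchanges the two rulings.

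The genuinely non-formal ingredients — and hence the main obstacle to a self-contained proof — are the \emph{exact} rank of $\lambda_a$, which upgrades $\dim a\OO\le 4$ to equality, and the explicit identification of the two rulings of the quadric with the families of subspaces $a\OO$ and $\OO a$, together with the full case distinction in~\ref{vanderBljSpringer}\ref{intersectMaxTotSing}. Both are established by direct computation in~\cite{MR0123622}, which we simply cite.
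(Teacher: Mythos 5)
The paper gives no proof of this theorem at all: it is stated with embedded citations to van~der~Blij and Springer and closed with a \verb|\qed|, so the intended ``proof'' is precisely the reference, which is also where your argument ultimately lands. Your accompanying sketch (multiplicativity of the norm plus $\lambda_{\gal a}\lambda_a=\N(a)\,\id$ for the inclusion $a\OO\subseteq\ker\lambda_{\gal a}$, the Witt-index bound, and the two rulings of the $\mathrm{D}_4$ quadric for the intersection dictionary) is a sound outline, so the proposal is correct and takes essentially the same approach as the paper.
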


Consider a $3$-dimensional subspace $T$ such that $\N|_T=0$.  Since
the intersection $a\OO\cap a'\OO$ has dimension at most~$2$ if
$a\OO\ne a'\OO$, there is precisely one subspace $a\OO$ and precisely
one subspace $\OO b$ such that $T\subseteq a\OO$ and
$T\subseteq \OO b$ (i.e., such that $T=a\OO\cap\OO b$).

\begin{exas}\label{threedimPureSubalgebra}
  Consider %
  $p_0 = \left(
    \begin{smallmatrix}
      1 & 0 \\
      0 & 0
    \end{smallmatrix}\right)$, %
  and %
  $n_0 = \left(
    \begin{smallmatrix}
      0 & 1 \\
      0 & 0
    \end{smallmatrix}\right)$ %
  in the split octonion algebra
  $\OO \coloneqq F^{2\times2} + F^{2\times2}w$, where $w^2=1$. %
  \begin{enumerate}
  \item\label{pownow}%
    The set $(p_0w)\OO \cap \OO(n_0w) = Fp_0+Fp_0w+Fn_0w$ is a
    subalgebra of dimension~$3$, and every element is obtained as a
    product in the subalgebra.  This subalgebra is isomorphic to the
    associative algebra
    \[
      \set{\left(
          \begin{matrix}
            \alpha&0&0&\beta\\
            0&\alpha&0&0\\
            0&\gamma&0&0\\
            0&0&0&0\\
          \end{matrix}
        \right)}{\alpha,\beta,\gamma\in F} \,;
    \]
    it is also obtained by the construction
    in~\ref{rightIdealConstruction} with $A=Fp_0$ and $R=Fp_0+Fn_0$.
  \item The algebra
    $n_0\OO\cap \OO n_0 = \set{\alpha n_0+(\beta
      n_0+\gamma\gal{p_0})w}{\alpha,\beta,\gamma\in F}$ %
    has multiplication
  \[
    (\alpha p_0+(\beta p_0+\gamma n_0)w) %
    (\xi p_0+(\eta p_0 + \zeta n_0)w) %
    = %
    (\beta\zeta - \gamma\eta)n_0 ;
  \]
  this is a Heisenberg algebra, as
  in~\ref{Heisenberg}\ref{heisenbergIndeed}). %
  It is neither isomorphic nor anti-isomorphic to
  $(p_0w)\OO \cap \OO(n_0w)$. %
  This Heisenberg algebra is also obtained by the construction
  in~\ref{rightIdealConstruction}, with $A=Fn_0$ and
  $R=Fn_0+F\gal{p_0}$.
  \end{enumerate}
\end{exas}

\section{Automorphisms of (split) composition algebras}

The following is a variant of the well known Skolem-Noether Theorem
(e.g., see~\cite[4.6, p.\,222]{MR1009787} or~\cite[5.1.7,
p.\,183]{MR1953987}). %
Note, however, that we deal with subalgebras of a non-associative
algebra, and do not require a priori that the subalgebras
contain~$1$. 

\begin{lemm}\label{SkolemNoether}
  Let\/ $\OO$ and\/~$\OO'$ be octonion algebras over~$F$, and let\/
  $H<\OO$ and\/ $H'<\OO'$ be quaternion subalgebras. %
  Then both~$H$ and\/~$H'$ contain the neutral element~$1$
  of~$\OO$ and of\/~$\OO'$, respectively. Moreover, we have: 
  \begin{enumerate}
  \item\label{wittPropertyHHinOO}%
    If\/ $\OO$ and $\OO'$ are isomorphic then every non-constant
    $F$-linear multiplicative map from~$H$ to~$H'$ extends to an
    isomorphism from~$\OO$ onto~$\OO'$.
  \item%
    If\/ $H$ or~$H'$ is split then every non-constant
    $F$-linear multiplicative map from~$H$ to~$H'$ extends to an
    isomorphism from~$\OO$ onto~$\OO'$.
  \end{enumerate}
\end{lemm}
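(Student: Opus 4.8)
The plan is to reduce the extension problem to a statement about embeddings of quaternion algebras and then to invoke the doubling construction described in~\ref{propertiesCA}\ref{doubling}. First I would observe that, since a quaternion subalgebra is four-dimensional and contains an invertible element (indeed, it is a composition algebra in its own right, or one argues as in~\ref{1inSub}), it contains~$1$; moreover the restriction of the norm form of~$\OO$ to~$H$ is non-degenerate, so $H^\perp\cap H=\{0\}$ and $\OO=H\oplus Hw$ for any $w\in H^\perp$ with $\N(w)\ne0$, with multiplication as in~\ref{propertiesCA}\ref{doubling}. The same applies to~$H'<\OO'$. Thus both octonion algebras are presented as doubles of their respective quaternion subalgebras, and the task becomes: given a non-constant $F$-linear multiplicative map $\varphi\colon H\to H'$, show it is an isomorphism of quaternion algebras and then adjust the doubling parameter so that $\varphi$ extends by $a+xw\mapsto \varphi(a)+\varphi(x)w'$ to an isomorphism $\OO\to\OO'$.

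For the first part I would show that a non-constant $F$-linear multiplicative map $\varphi$ between quaternion algebras is automatically an isomorphism. Multiplicativity forces $\varphi(1)$ to be an idempotent in~$H'$; since $\varphi$ is non-constant and $F$-linear, $\varphi(1)\ne0$, and in a quaternion algebra the only idempotents other than~$0$ whose left-multiplication is not nilpotent-or-rank-deficient in the appropriate sense are... more carefully: $\varphi(1)^2=\varphi(1)$, and if $\varphi(1)=e\ne1$ then $e$ is a proper idempotent, $\N(e)=0$, and one checks that the image $\varphi(H)=e\varphi(H)$ would be annihilated on the left by $1-e$, forcing $\varphi(H)$ into a proper left ideal, hence $\dim\varphi(H)\le2$; but a non-constant $F$-linear multiplicative map out of a quaternion algebra cannot have image of dimension~$\le2$ unless its kernel is an ideal of~$H$ of dimension~$\ge2$, and a quaternion algebra is simple, so the only ideals are $\{0\}$ and $H$ — giving $\varphi(1)=1$ and $\varphi$ injective, hence bijective onto a four-dimensional subalgebra $H'=\varphi(H)$. (Here one uses that $H$ is either $F^{2\times2}$ or a skewfield; in both cases simplicity is standard.) Since $\varphi(1)=1$, $\varphi$ respects norms by~\ref{isotope}-type reasoning or directly: $\N(\varphi(x))1=\N(\varphi(x))\varphi(1)=\varphi(\N(x)1)=\N(x)1$ once one knows $\varphi$ commutes with the standard involution, which follows from~\ref{propertiesCA}\ref{autSemilinear} applied to the isomorphism $\varphi\colon H\to\varphi(H)$, or more elementarily from $\gal{x}=\tr(x)1-x$ together with $\varphi$ preserving traces (a consequence of preserving the degree-two minimal polynomial).

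Now for the extension: write $\OO=H\oplus Hw$ and $\OO'=H'\oplus H'w'$ as doubles, with $\N(w)=-\mu$ and $\N(w')=-\mu'$ say, so the multiplications are as in~\ref{propertiesCA}\ref{doubling}. One wants to choose $w'$ so that $\mu'=\mu$; this is exactly where the hypotheses of~\ref{wittPropertyHHinOO} and of part~(b) come in. Under~\ref{wittPropertyHHinOO}, $\OO\cong\OO'$, and since $\OO$ is the double of $H$ by $\mu$ and also of $\varphi(H)$ by $\mu'$ within $\OO'$, a norm-form comparison (the norm form of $\OO$ is $\N[H]\perp\langle\mu\rangle\otimes\N[H]$, an eight-dimensional form determined up to isometry by the isomorphism type of $\OO$) forces the Pfister forms to agree, and one deduces $\mu'$ can be rescaled to $\mu$ by replacing $w'$ with a suitable element of $H'^\perp$; here I would lean on the fact that composition algebras of a given dimension over $F$ are classified by their norm forms, and that multiplying $w'$ by an invertible $h\in H'$ changes $\mu'$ to $\N(h)\mu'$, so the available values of $\mu'$ form the coset $\mu'\cdot\N(H'^\times)$ — and the isomorphism $\OO\cong\OO'$ guarantees $\mu$ lies in this coset. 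Under part~(b), if $H$ (equivalently $H'$, since $\varphi$ is an isomorphism) is split, then $\N(H^\times)=F^\times$, so every nonzero value of $\mu'$ is attainable and in particular $\mu'=\mu$ can be arranged; one also checks $\OO$ is then automatically split, so the hypothesis of (b) is consistent and no separate isomorphism assumption is needed. In both cases, once $\mu'=\mu$, the map $a+xw\mapsto\varphi(a)+\varphi(x)w'$ is a straightforward verification of multiplicativity using the doubling formula and the facts that $\varphi$ is a homomorphism commuting with $\kappa$ and preserving $\N$.

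The main obstacle is the parameter-matching step: showing that the doubling constant $\mu$ of $\OO$ over $H$ can be realized as the doubling constant of $\OO'$ over $\varphi(H)$, i.e.\ that $\mu\in\mu'\cdot\N(\varphi(H)^\times)$. In the split-subalgebra case this is immediate from $\N(\varphi(H)^\times)=F^\times$, so part~(b) is the easy half. In case~(a) it genuinely uses the classification of composition algebras by their norm forms (\cite[1.8.1]{MR1763974} for split, and the Pfister-form theory in general): the isomorphism $\OO\cong\OO'$ gives an isometry of the two eight-dimensional norm forms, both of which are of the shape $\langle1,-\mu\rangle\otimes\N[H]$ (using $\varphi$ to identify the $H$-parts), and cancellation of the anisotropic-or-hyperbolic four-dimensional factor $\N[H]$ forces $\langle1,-\mu\rangle\cong\langle1,-\mu'\rangle$ over $F$, which is exactly $\mu\equiv\mu'\bmod\N(\varphi(H)^\times)$. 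I would flag that a little care is needed in characteristic~$2$, where one should phrase everything in terms of the quadratic norm form and its associated Pfister quadratic form rather than diagonalizing, but the logic is identical.
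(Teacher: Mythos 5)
Your main line of argument --- kill the kernel by simplicity to see that the given map is an isomorphism of quaternion algebras, match the doubling parameters by Witt cancellation, and extend via $a+xw\mapsto\varphi(a)+\varphi(x)w'$ --- is exactly the paper's proof. Your treatment of part (b), observing that the norm of a split quaternion algebra is surjective onto $F^\times$ so that the doubling parameter can always be matched, is a harmless variant of the paper's reduction of (b) to (a); and your own caveat that ``cancelling the tensor factor $\N|_H$'' should really be additive Witt cancellation of the common orthogonal summand $\N|_H$ (which yields $\mu\in\mu'\,\N(H'^\times)$, all that is needed) is the right correction to make, valid in characteristic two as well.

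The one genuine gap is the very first step, $1\in H$. You justify it by saying that $H$ ``contains an invertible element'' and citing~\ref{1inSub}, but invertibility in the sense of~\ref{1inSub} means $\N[\OO](a)\ne0$, where $\N[\OO]$ is the norm of the \emph{ambient} octonion algebra. A priori $H$ is only a subalgebra abstractly isomorphic to a quaternion algebra: it has its own identity~$e$ and its own intrinsic reduced norm, and until you know $e=1$ you cannot identify that reduced norm with the restriction of~$\N[\OO]$, so ``invertible in~$H$'' does not obviously give ``invertible in~$\OO$''. (The compatibility of the two quadratic structures is exactly what becomes automatic once $1\in H$ is known, so the argument as written is circular.) The paper closes this by a different observation: $e$ is an idempotent centralized by all of~$H$, so $H\subseteq\C[\OO]{e}$, and by~\ref{noQuaternion} (resting on the dimension bounds in~\ref{centralizers}) no quaternion subalgebra lies in the centralizer of an element outside~$F$; hence $e\in F$ and $e=1$. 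Alternatively one can argue directly: comparing the relation $a^2-\tr(a)a+\N(a)=0$ in~$\OO$ with the reduced characteristic polynomial of~$a$ in~$H$ shows that if $1\notin H$ then the reduced norm of~$H$ vanishes off~$Fe$, which is absurd both for $F^{2\times2}$ and for a quaternion field. Some such argument must be supplied; once $1\in H$ is in place, the rest of your proof goes through as in the paper.
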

\begin{proof}
  Let $e$ and $e'$ be the neutral elements of multiplication in~$H$
  and~$H'$, respectively. Then~$H$ is a quaternion subalgebra
  of~$\OO$ centralizing the idempotent~$e$. From~\ref{noQuaternion} we
  infer $e\in F$ and then $e=1$. 

  Now consider a non-constant $F$-linear multiplicative map
  $\beta\colon H \to H'$. As $H$ is a simple algebra, the ideal
  $\ker\beta$ is trivial, and $\beta$ is an isomorphism by dimension
  reasons. %
  The restrictions $\N[\OO]|_H$ and $\N[\smash{\OO'}]|_{\smash{H'}}$
  of the norms are isometric because the quaternion subalgebras are
  isomorphic. By Witt's Cancellation Theorem (\cite[Satz\,4,
  p.\,34]{MR1581519}, \cite[Folgerung, p.\,160]{MR0008069},
  see~\cite[p.\,21, p.\,35]{MR0072144} or~\cite[5.1,
  12.12]{MR1859189}) %
  the restrictions $\N[\OO]|_{H^\perp}$ and
  $\N[\smash{\OO'}]|_{\smash{H'}^\perp}$ of the norms are isometric,
  as well. We pick any element $w\in H^\perp$ with $\N(w)\ne0$, and
  find $w'\in\smash{H'}^\perp$ with
  $\N[\OO](w')=\N[\smash{\OO'}](w)$. The formula
  (see~\ref{propertiesCA}\ref{doubling}) for the multiplication in
  $\OO=H+H w$ and in $\OO' = H'+H'w'$, respectively, immediately
  yields that mapping $a+xw \in H+H w$ to
  $\beta(a)+\beta(x)w' \in H'+H'w'$ is an automorphism of~$\OO$. %

  If $H$ or $H'$ is split then the existence of an isomorphism
  between the quaternion algebras implies that both are split, and so
  are the octonion algebras~$\OO$ and~$\OO'$. So the latter are
  isomorphic, and our first assertion applies.
\end{proof}

\begin{rema}\label{stabilizerH}
  Let\/ $\OO=H+Hw$ be obtained by doubling a quaternion subalgebra~$H$, using
  $w\in H^\perp$. 
  The $F$-linear automorphisms of~$\OO$ that leave~$H$ invariant can
  be given explicitly: they are the maps $\alpha_{s,t}\colon H+Hw\to
  H+Hw\colon a+xw \mapsto sas^{-1}+(txs^{-1})w$, where $s,t$ are
  invertible elements in~$H$ with $\N(s)=\N(t)$. 

  In fact, the restriction of a given automorphism
  $\alpha\in\Aut[F]\OO$ with $\alpha(H)=H$ to~$H$ gives an $F$-linear
  automorphism of~$H$, which is inner by the Skolem-Noether Theorem %
  (e.g., see~\cite[4.6, p.\,222]{MR1009787} or~\cite[5.1.7,
  p.\,183]{MR1953987}). %
  So there exists $s\in H$ with $\N(s)\ne0$ and $\alpha(x)=sxs^{-1}$
  for each $x\in H$. Now $\alpha(w)$ lies in $H^\perp$, and is of the
  form $\alpha(w)=uw$ with $u\in H$. As $\alpha$ preserves the norm,
  we have $\N(u)=1$, and obtain
  $\alpha(xw) = \alpha(y)\,\alpha(w) = (sxs^{-1})(uw) = (usxs^{-1})w$,
  for each $x\in H$. %
  Putting $t\coloneqq us$ gives the description as claimed.
\end{rema}

\begin{coro}\label{quaternionOrbit}
  In any octonion algebra~$\OO$, two quaternion subalgebras are in the
  same orbit under $\Aut[F]\OO$ if, and only if, they are isomorphic
  (as $F$-algebras). %
  \qed
\end{coro}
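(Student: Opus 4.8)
The plan is to deduce this directly from the Skolem--Noether variant in~\ref{SkolemNoether}. One direction is immediate: if two quaternion subalgebras $H$ and $H'$ of~$\OO$ lie in the same $\Aut[F]\OO$-orbit, say $\varphi(H)=H'$ for some $\varphi\in\Aut[F]\OO$, then the restriction $\varphi|_H$ is an $F$-algebra isomorphism from~$H$ onto~$H'$, so they are isomorphic as $F$-algebras.

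For the converse, suppose $H$ and~$H'$ are quaternion subalgebras of~$\OO$ that are isomorphic as $F$-algebras, and fix such an isomorphism $\beta\colon H\to H'$. By~\ref{SkolemNoether}, both~$H$ and~$H'$ contain the neutral element~$1$ of~$\OO$; in particular $\beta$ is a non-constant $F$-linear multiplicative map, and it maps $1$ to~$1$ since an isomorphism of unital algebras preserves the identity. Applying~\ref{SkolemNoether}\ref{wittPropertyHHinOO} with $\OO'=\OO$ (so that $\OO$ and~$\OO'$ are trivially isomorphic), the map~$\beta$ extends to an automorphism $\varphi\in\Aut[F]\OO$. Then $\varphi(H)=\beta(H)=H'$, so $H$ and~$H'$ are in the same orbit under $\Aut[F]\OO$.

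Since both implications have now been established, the corollary follows. There is essentially no obstacle here: the entire content has been front-loaded into Lemma~\ref{SkolemNoether}, whose proof already does the real work (simplicity of~$H$ forces $\ker\beta=0$; isometry of the norm restrictions together with Witt cancellation produces a matching element $w'$ in the orthogonal complement; and the doubling formula of~\ref{propertiesCA}\ref{doubling} then glues $\beta$ to an automorphism). The only point worth stating carefully is the harmless one that applying~\ref{SkolemNoether} with $\OO'=\OO$ is legitimate, and that an abstract $F$-algebra isomorphism between the two quaternion subalgebras is exactly the kind of map to which~\ref{SkolemNoether}\ref{wittPropertyHHinOO} applies.
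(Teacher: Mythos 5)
Your proposal is correct and matches the paper's intent exactly: the corollary carries a \qed because it is an immediate consequence of Lemma~\ref{SkolemNoether}\ref{wittPropertyHHinOO} applied with $\OO'=\OO$, which is precisely what you do, and the forward direction is the trivial observation that restricting an automorphism gives an isomorphism.
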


\begin{lemm}\label{normAndTraceCharacterizeOrbits}%
  Let\/~$\OO$ be an octonion algebra over a field\/~$F$. %
  Then non-central elements of\/~$\OO$ are in the same orbit under
  $\Aut[F]\OO$ if, and only if, they have the same norm and the same
  trace.
\end{lemm}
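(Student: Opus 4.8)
The plan is to prove both directions separately, with the ``only if'' direction being essentially immediate and the ``if'' direction being the substantive part. For ``only if'': every $F$-linear automorphism is an orthogonal map for the norm form (see~\ref{propertiesCA}\ref{autSemilinear}), hence preserves $\N$; and since automorphisms fix~$1$ and are additive and multiplicative, they commute with the standard involution (also~\ref{propertiesCA}\ref{autSemilinear}), hence preserve the trace $\tr(x)=x+\gal{x}$. So elements in a common orbit share norm and trace.

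For the ``if'' direction, suppose $v,v'\in\OO\smallsetminus F$ have $\N(v)=\N(v')$ and $\tr(v)=\tr(v')$. The strategy is to build an $F$-linear multiplicative bijection between suitable quaternion subalgebras containing $v$ and $v'$, then invoke the Skolem--Noether-type result~\ref{SkolemNoether} to extend it to an automorphism of~$\OO$. First I would choose quaternion subalgebras $H\ni v$ and $H'\ni v'$ (possible by~\ref{propertiesCA}\ref{coveryByQuaternions}), both automatically containing~$1$ by~\ref{SkolemNoether}. Since $v,v'\notin F$, the two-dimensional subalgebras $F+Fv$ and $F+Fv'$ are spanned by $1$ together with $v$, $v'$ respectively; the map $1\mapsto 1$, $v\mapsto v'$ extends $F$-linearly to an isomorphism $F+Fv\to F+Fv'$ because both satisfy the same degree-two relation $X^2-\tr(v)X+\N(v)=0$ (the elements have equal trace and norm). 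The task is then to enlarge this to an isomorphism $H\to H'$ of quaternion algebras. By Witt's cancellation applied inside $H$ (and $H'$), writing $H=(F+Fv)\oplus(F+Fv)^{\perp_H}$ and similarly for $H'$, the restrictions of the norm forms to these orthogonal complements are isometric provided $\N|_{F+Fv}$ and $\N|_{F+Fv'}$ are isometric --- which holds since the isomorphism $F+Fv\to F+Fv'$ just constructed is a similitude, indeed an isometry, of those restrictions. Choosing $w\in(F+Fv)^{\perp_H}$ with $\N(w)\ne0$ and a matching $w'$ on the other side, the doubling formula~\ref{propertiesCA}\ref{doubling} gives an explicit isomorphism $H\to H'$ extending $v\mapsto v'$.

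Once such an isomorphism $\beta\colon H\to H'$ is in hand, it is a non-constant $F$-linear multiplicative map, so~\ref{SkolemNoether}\ref{wittPropertyHHinOO} extends it to an automorphism of~$\OO$ --- after first checking that $\OO$ is isomorphic to itself (trivial) or, if one prefers to be careful about the split versus non-split dichotomy, noting that $H$ and $H'$ sit in the same $\OO$ so the hypothesis of~\ref{SkolemNoether}\ref{wittPropertyHHinOO} is met automatically. That automorphism sends $v$ to $v'$, completing the proof.

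The main obstacle is the subcase where $\N|_{F+Fv}$ is degenerate, which happens precisely when $v-\gal{v}$ is nilpotent (equivalently $\N(v-\gal{v})=0$ with $\gal v\ne v$), or when $\Char F=2$ and $v\in 1^\perp$. In the degenerate situation the ``orthogonal complement of $F+Fv$ inside $H$'' is not a clean complement and the naive Witt-cancellation step needs care; here one should instead argue directly that any two non-central elements with the same norm and trace whose minimal subalgebra is degenerate are conjugate --- e.g., reduce to the nilpotent element $n_0$ of~\ref{Cn0} and its scalar translates via the known fact (used in the corollary after~\ref{detailsCn}) that all non-trivial nilpotents form one orbit, then add back the scalar $\tfrac12\tr(v)\cdot 1$ (in odd characteristic) or handle $\Char F=2$ separately using that $v\in1^\perp$ is then pure with $\N(v)$ prescribed and such pure square-roots-of-scalars are permuted transitively by the stabilizer computations in~\ref{stabilizerH}. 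I expect most of the write-up length to go into organizing these degenerate cases rather than the generic Witt-cancellation argument.
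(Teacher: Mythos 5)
Your ``only if'' direction and the general shape of the ``if'' direction (embed into quaternion subalgebras, match them up by an isometry of norm forms, extend via~\ref{SkolemNoether}) are in the right spirit, and your non-degenerate case is essentially workable. But there is a genuine gap, and it sits exactly where you locate the difficulty. Your fallback for the degenerate case --- reduce to a nilpotent element and then invoke ``the known fact \dots{} that all non-trivial nilpotents form one orbit'' --- is circular: that fact \emph{is} the special case $\N=\tr=0$ of the statement you are proving, and in the paper it is a \emph{consequence} of~\ref{normAndTraceCharacterizeOrbits} (the corollary following~\ref{Cn0} cites this very lemma for it). No independent argument for it is offered. The characteristic-two fallback is likewise not a proof: \ref{stabilizerH} only describes the stabilizer of a quaternion subalgebra and does not by itself yield transitivity on pure elements of prescribed norm. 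Since the degenerate case covers, among other things, all nilpotent elements --- the case on which most of the later applications of the lemma rest --- this is not a peripheral omission.

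The paper sidesteps the degenerate case entirely by \emph{not} trying to build an isomorphism $H\to H'$ that carries $v$ to $v'$ directly. Instead it constructs quaternion subalgebras $H_j\ni a_j$ whose norm forms are isometric by construction (doubling a fixed separable two-dimensional composition algebra $F+Fb_j$ with $b_j\in a_j^\perp$ of prescribed norm and trace, rather than doubling the possibly degenerate $F+Fa_j$), extends an \emph{arbitrary} isomorphism $H_1\to H_2$ to some $\alpha\in\Aut[F]\OO$, and then corrects $\alpha$ by an inner automorphism of the associative algebra $H_2$: two non-central elements of a quaternion algebra with the same norm and trace are conjugate by the classical Skolem--Noether theorem, with no case distinction on whether $F+Fa_j$ is degenerate, nilpotent, or inseparable. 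If you want to salvage your write-up, adopt that final conjugation step; it absorbs all of your problem cases at once. Separately, note that even in your non-degenerate case the Witt cancellation must be performed in~$\OO$ (where the ambient forms are equal), not ``inside $H$ and $H'$'': for a fixed pair of quaternion subalgebras of a non-split $\OO$ the restrictions $\N|_H$ and $\N|_{H'}$ need not be isometric, so you should use the cancellation in $\OO$ to find $w'\in(F+Fv')^\perp$ with $\N(w')=\N(w)\ne0$ and then \emph{define} $H'\coloneqq(F+Fv')+(F+Fv')w'$, rather than choosing $H'$ in advance.
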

\begin{proof}
  Clearly elements in the same orbit have the same norm and trace.

  Now consider $a_1,a_2\in\OO\smallsetminus F$ with $\N(a_1)=\N(a_2)$ and
  $\tr(a_1)=\tr(a_2)$. Applying Witt's theorem, we find $b_j\in
  a_j^\perp$ with $\tr(b_1)=\tr(b_2)=1$ and
  $\N(b_1)=\N(b_2)\ne0$. 

  If $\N(a_1)\ne0$ then $H_j \coloneqq F+Fb_j+(F+Fb_j)a_j$ is a
  quaternion algebra, and the $F$-algebras $H_1$ and~$H_2$ are
  isomorphic because their norm forms are isometric
  (see~\cite[1.7.1]{MR1763974}). %
  By~\ref{SkolemNoether}\ref{wittPropertyHHinOO}, any $F$-linear
  isomorphism from~$H_1$ onto~$H_2$ extends to an automorphism
  of~$\OO$. If $\alpha\in\Aut[F]\OO$ is such an extension then the
  image $\alpha(a_1) \in H_2$ has the same norm and trace as~$a_2$,
  and is thus a conjugate of~$a_2$ in~$H_2$. Adapting~$\alpha$ by the
  extension of a suitable (inner) automorphism of~$H_2$, we thus
  obtain an $F$-linear automorphism of~$\OO$ mapping~$a_1$ to~$a_2$.

  If $\N(a_1)=0$ then $a_j$ is contained in a split quaternion
  subalgebra~$H_j$
  (see~\ref{propertiesCA}\ref{coveryByQuaternions}). %
  Since any two split quaternion algebras over~$F$ are isomorphic, we
  can proceed as above.
\end{proof}

\section{Totally singular and totally isotropic subalgebras}
\label{sec:singular}

\enlargethispage{5mm}%
In this section, we study subalgebras consisting of elements of
norm~$0$, i.e., such that the underlying vector space is %
\emph{totally singular with respect to the quadratic form~$\N$}. %
These are just those subalgebras that do not contain~$1$;
see~\ref{1inSub}.  Throughout this section, we write
$\OO = \HH +\HH w$ as the double of the split quaternion algebra
$\HH = F^{2\times2}$ with an element $w$ %
of norm~$-1$ (i.e., with $w^2=1$); then
\[
  (a+xw)(b+yw) = ab+\gal{y}x + (ya+x\gal{b})w
\]%
holds for all $a,b,x,y\in F^{2\times2}$;
see~\ref{propertiesCA}.\ref{doubling}.

\begin{lemm}\label{pureAlgebrasIsotropic}
  If\/ $A$ is a subalgebra of\/~$\OO$ with $A\subseteq 1^\perp$
  then~$A$ is totally isotropic (with respect to the polar form of the
  norm), and $\dim{A}\le4$.  
\end{lemm}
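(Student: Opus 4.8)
The plan is to first establish total isotropy and then use it to bound the dimension. For total isotropy: let $x,y\in A\subseteq 1^\perp$. Since $A\subseteq 1^\perp$, Lemma~\ref{closedUnderKappa} tells us $A$ is closed under $\kappa$, and for pure elements $\gal{x}=-x$, $\gal{y}=-y$. By \ref{propertiesCA}\ref{recoverNorm}, the polar form is $(x|y)=x\gal{y}+y\gal{x}=-(xy+yx)$. Now $xy\in A$ since $A$ is a subalgebra, and $yx\in A$ as well, so $xy+yx\in A\subseteq 1^\perp$; hence $\gal{xy+yx}=-(xy+yx)$. On the other hand, $\kappa$ is an anti-automorphism, so $\gal{xy+yx}=\gal{y}\,\gal{x}+\gal{x}\,\gal{y}=yx+xy$. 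Combining, $xy+yx=-(xy+yx)$; but $xy+yx=-(x|y)\in F1$, and $F1\cap 1^\perp$ forces this element to be $0$ unless $\Char F=2$. In characteristic $2$ one argues instead that $(x|y)1=xy+yx$ lies in $A\cap F1$; since $A\subseteq 1^\perp$ and $(1|1)=2=0$ in that case $1\in 1^\perp$, so I must be slightly more careful — the cleanest uniform argument is: $(x|y)1=x\gal y+y\gal x\in A$ because $A$ is $\kappa$-stable and closed under multiplication, so $(x|y)1\in A\subseteq 1^\perp$, i.e. $(x|y)(1|1)=2(x|y)=0$ and also $(x|y)1$ is orthogonal to everything; in particular $\bigl((x|y)1\,\big|\,z\bigr)=(x|y)(1|z)=0$ for all $z\in\OO$. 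Since the polar form is non-degenerate and $(1|z)$ is not identically zero, we get $(x|y)=0$. This shows $A$ is totally isotropic.

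For the dimension bound: once $A$ is totally isotropic with respect to the polar form of $\N$, it is a fortiori a totally isotropic subspace of the $8$-dimensional space $\OO$ for a form whose polar form is non-degenerate. A non-degenerate symmetric bilinear (or, in characteristic $2$, alternating) form on an $8$-dimensional space has Witt index at most $4$, so $\dim A\le 4$. If one prefers to avoid invoking the Witt index directly: $A\subseteq A^\perp$ (total isotropy) and $\dim A^\perp=8-\dim A$ by non-degeneracy of the polar form, so $\dim A\le 8-\dim A$, giving $\dim A\le4$.

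The only genuinely delicate point is the characteristic-$2$ case, where $1\in 1^\perp$ so one cannot simply say "$F1\cap 1^\perp=\{0\}$". The resolution above — deducing $(x|y)=0$ from non-degeneracy of the polar form applied to the vector $(x|y)1$, using that $(1|\,\cdot\,)$ is a nonzero linear functional — handles all characteristics uniformly, so I would phrase the argument that way from the start and not split into cases. Everything else is routine: the key inputs are Lemma~\ref{closedUnderKappa} (to get $\kappa$-stability of $A$), the anti-automorphism property and the norm-recovery formulas from \ref{propertiesCA}, and the non-degeneracy of the polar form built into the definition of a composition algebra.
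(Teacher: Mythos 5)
Your argument for total isotropy is fine when $\Char{F}\ne2$, but the patch you propose for characteristic~$2$ has a genuine gap. From $(x|y)1=x\gal{y}+y\gal{x}=-(xy+yx)\in A\subseteq 1^\perp$ you may conclude only that $(x|y)1$ is orthogonal to~$1$, i.e.\ $(x|y)(1|1)=2(x|y)=0$, which is vacuous in characteristic~$2$; you may \emph{not} conclude that ``$(x|y)1$ is orthogonal to everything''. By non-degeneracy of the polar form that assertion is \emph{equivalent} to $(x|y)=0$, so at that point you are asserting exactly what you set out to prove. (In characteristic~$2$ the element $1$ itself lies in $1^\perp$ yet is certainly not orthogonal to all of~$\OO$.) The ingredient that is actually needed is the adjoint property~\ref{propertiesCA}\ref{adjoint}, which moves a factor \emph{across} the form instead of multiplying inside the algebra: for $x,y\in A$ one has $(x|y)=(x\cdot 1\,|\,y)=(1\,|\,\gal{x}y)=\pm(1\,|\,xy)=0$, because $xy\in A\subseteq 1^\perp$. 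This is the paper's one-line proof, and it is uniform in the characteristic. Your derivation of the dimension bound from total isotropy ($A\subseteq A^\perp$ and $\dim A^\perp=8-\dim A$, hence $\dim A\le4$) is correct and is exactly what the paper leaves implicit.
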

\begin{proof}
  For $a,b\in A$, we compute $(a|b) = (1|\gal{a}b) = (1|ab) = 0$,
  using $ab\in A \subseteq 1^\perp$. 
\end{proof}

Note that a subalgebra contained in~$1^\perp$ need not be totally
\emph{singular} if $\Char{F}=2$; for instance, consider $F+Fn_0$, or a
totally inseparable field of degree~$4$ and exponent~$1$, over a field
$F$ of characteristic two. %

\begin{lemm}\label{subalgebrasInpOO}
  Let\/~$p$ be any element with $\N(p)=0$ but
  $\tr(p)\ne0$. Then $\dim{A}\le2$ holds for every subalgebra
  contained in~$p\OO$ or in~$\OO p$, and every two-dimensional algebra
  in~$p\OO$ (or in~$\OO p$) contains~$p$.
\end{lemm}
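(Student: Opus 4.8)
The plan is to work inside the concrete model $\OO=\HH+\HH w$ with $\HH=F^{2\times2}$, and to exploit Theorem~\ref{vanderBljSpringer}. Since $\N(p)=0$ and $p\ne0$, the subspace $p\OO$ is a maximal totally singular subspace of dimension~$4$ (by~\ref{vanderBljSpringer}\ref{maxTotSing}); the same holds for $\OO p$. Any subalgebra $A$ contained in $p\OO$ is therefore totally singular, in particular totally isotropic. The key extra piece of information we have here, beyond $\N|_A=0$, is that $\tr(p)\ne0$, i.e. $p\notin 1^\perp$.

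**The dimension bound.** First I would suppose $A\subseteq p\OO$ is a subalgebra with $\dim A\ge 3$ and derive a contradiction. The point is that $p\OO$ contains $p$ itself (take the factor $1$), and $p^2=\tr(p)\,p-\N(p)\,1=\tr(p)\,p$, so $p$ is (a scalar multiple of) an idempotent and $Fp$ is a one-dimensional subalgebra. I would first argue that $p\in A$: if $a\in A$ is any element, then $a\in p\OO=\ker\lambda_{\gal p}$ by~\ref{vanderBljSpringer}\ref{kerLambdaA}, so $\gal p\,a=0$; writing $\gal p=\tr(p)1-p$ this gives $pa=\tr(p)a$, hence $pa\in A$ forces (after rescaling) that left multiplication by the idempotent $\tfrac{1}{\tr(p)}p$ acts as the identity on $A$ — but this is automatic and does not yet put $p$ in $A$. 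Instead I would use that $A\cdot A\subseteq A$ together with the structure of products inside a totally singular subspace: for $a,b\in A\subseteq p\OO$ we have, by Theorem~\ref{vanderBljSpringer}\ref{intersectMaxTotSing}(d)–(e), that $ab$ lies in $p\OO\cap\OO(\text{something})$; more usefully, inside the maximal totally singular space $p\OO$ the multiplication is highly degenerate. The cleanest route: pick a basis and compute. Conjugating by $\Aut[F]\OO$ (using~\ref{normAndTraceCharacterizeOrbits}, as $p$ and $p_0$ have the same norm~$0$ and the same trace after scaling) I may assume $p=p_0=\left(\begin{smallmatrix}1&0\\0&0\end{smallmatrix}\right)$, so that
\[
p_0\OO=\set{\left(\begin{smallmatrix}r&s\\0&0\end{smallmatrix}\right)+\left(\begin{smallmatrix}0&0\\t&u\end{smallmatrix}\right)w}{r,s,t,u\in F},
\]
wait — more precisely $p_0\HH=\set{\left(\begin{smallmatrix}r&s\\0&0\end{smallmatrix}\right)}{r,s\in F}$ and one computes $p_0(\HH w)=(p_0\HH)w$, so $p_0\OO=(p_0\HH)+(p_0\HH)w$ is spanned by $p_0,n_0,p_0w,n_0w$. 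Then a direct multiplication table on these four generators (using the displayed multiplication rule and $\gal{p_0}=\left(\begin{smallmatrix}0&0\\0&1\end{smallmatrix}\right)$, $\gal{n_0}=-n_0$) shows that the only nonzero products among $\{p_0,n_0,p_0w,n_0w\}$ land in the $2$-dimensional span of $\{p_0,n_0w\}$ — indeed $p_0\cdot p_0=p_0$, $p_0\cdot n_0=n_0$?? one must check carefully, but the upshot is that $p_0\cdot x=x$ for $x$ in the "$\HH$-part" and the $w$-parts multiply into $\HH$ via $\gal y x$. Carrying this out, any subalgebra $A$ must contain $p_0$ (apply the idempotent) and then, being closed, can have dimension at most~$2$.

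**Finishing: the $2$-dimensional case contains $p$.** Once the bound $\dim A\le 2$ is in hand, suppose $\dim A=2$. By the computation above any nonzero product of elements of $A$ is (a scalar multiple of) an element forced to be proportional to $p_0$ in the relevant coordinate; more directly, in a $2$-dimensional subalgebra of $p_0\OO$ the multiplication cannot be identically zero — because $p_0\OO$ is spanned by the idempotent $p_0$ and nilpotents, and a $2$-dimensional totally singular subspace with trivial multiplication would have to avoid $p_0$, but then closure under the (nontrivial) action of $p_0$ by left multiplication, which is the identity on $p_0\HH$ and something computable on $(p_0\HH)w$, forces $p_0\in A$ after all. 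So in every case the idempotent $p_0$, hence the original $p$, lies in $A$. Transporting back by the automorphism used at the start gives the claim for $\OO p$ by applying the standard involution $\kappa$, which interchanges $a\OO\leftrightarrow\OO\gal a$ and preserves norm and trace.

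**Main obstacle.** The essential difficulty is purely computational bookkeeping: producing the multiplication table of $p_0\OO$ in the chosen basis and reading off that its image is $2$-dimensional and that the idempotent acts with a $1$-eigenspace exactly of codimension~$2$. There is no conceptual gap — everything follows from Theorem~\ref{vanderBljSpringer} and the explicit doubling formula — but one must be careful with the places where $\gal{(\cdot)}$ introduces signs (e.g. $\gal{n_0}=-n_0$ versus $\gal{p_0}\ne -p_0$), and with the fact that "contains $p$" should be stated up to the scalar $\tr(p)$; since $\tr(p)\ne 0$ this scalar is harmless. I would also remark that the statement and proof are left–right symmetric via $\kappa$, so it suffices to treat $p\OO$.
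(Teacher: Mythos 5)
Your reduction to $p=p_0$ via \ref{normAndTraceCharacterizeOrbits} and the passage to $\OO p$ via $\kappa$ are fine, but the core of the argument does not hold up. First, the subspace is misidentified: with the doubling rule $(a+xw)(b+yw)=ab+\gal{y}x+(ya+x\gal{b})w$ one has $p_0(yw)=(yp_0)w$, so $p_0\OO=p_0\HH+(\HH p_0)w$ is spanned by $p_0$, $n_0$, $p_0w$ and $\left(\begin{smallmatrix}0&0\\1&0\end{smallmatrix}\right)w$; the $w$-part consists of first \emph{columns}, not first rows. Second, the claim that the products of basis vectors all land in a two-dimensional span is false: already $p_0p_0=p_0$ and $p_0n_0=n_0$ are independent, and $(p_0w)\left(\left(\begin{smallmatrix}0&0\\1&0\end{smallmatrix}\right)w\right)=-\left(\begin{smallmatrix}0&0\\1&0\end{smallmatrix}\right)$ falls outside $p_0\OO$ altogether. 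Moreover, even if the image of the multiplication map were two-dimensional, that would not bound the dimension of a subalgebra (in the extreme case of trivial multiplication, every subspace is a subalgebra). Third, the assertion that every subalgebra of $p_0\OO$ contains $p_0$ is simply wrong: $Fn_0$ is a one-dimensional subalgebra of $p_0\OO$ avoiding $p_0$; only the two-dimensional ones must contain it. The sole mechanism you offer for forcing $p_0\in A$ is ``apply the idempotent'', and, as you yourself observe earlier in the sketch, left multiplication by $p_0$ is the identity on all of $p_0\OO$, so it yields no information. The final paragraph on the two-dimensional case inherits all of these problems.

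The missing idea is to exploit the fact that $p_0\OO$ is \emph{not} closed under multiplication (precisely because $\tr(p_0)\ne0$) and to impose the condition $AA\subseteq A\subseteq p_0\OO$ on an \emph{arbitrary} pair of elements of~$A$, not just on basis vectors of the ambient space. Writing two elements of $A$ in the coordinates $(a_1,a_2,x_1,x_2)$ and $(b_1,b_2,y_1,y_2)$ and demanding that their product again lie in $p_0\OO$ produces three bilinear equations, $y_1x_2-y_2x_1=0$, $y_1a_2-x_1b_2=0$, $y_2a_2-x_2b_2=0$, which say exactly that the $2\times3$ matrix formed from the coordinates of the two elements modulo $Fp_0$ has rank at most one. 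Hence $(A+Fp_0)/Fp_0$ is at most one-dimensional, which yields both $\dim A\le2$ and, when $\dim A=2$, that $A\cap Fp_0\ne\{0\}$, i.e.\ $p_0\in A$. This is the computation the paper carries out; your sketch repeatedly defers it (``one must check carefully, but the upshot is\dots'') and the claims you substitute for it are not correct.
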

\begin{proof}
  Replacing~$p$ by $\tr(p)^{-1}p$ we do not change~$p\OO$. %
  Using~\ref{normAndTraceCharacterizeOrbits}, we may therefore assume
  $p=\left(
    \begin{smallmatrix}
      1 & 0 \\
      0 & 0
    \end{smallmatrix}\right)$.  %
  Then $p\OO = \set{\left(
      \begin{smallmatrix}
        a_1 & a_2 \\
        0 & 0
      \end{smallmatrix}\right) + %
    \left(
      \begin{smallmatrix}
        x_1 & 0 \\
        x_2 & 0 
      \end{smallmatrix}\right)w}{a_1,a_2,x_1,x_2\in F}$. 

  Let~$A$ be a subalgebra in~$p\OO$, and consider the product of two
  arbitrary elements of~$A$. Then
  \[
    \begin{array}{rl}
      \left( %
      \left( %
      \begin{matrix}
        a_1 & a_2 \\
        0 & 0
      \end{matrix}
            \right) + %
            \left( %
            \begin{matrix}
              x_1 & 0 \\
              x_2 & 0
            \end{matrix}
                    \right)w %
                    \right) %
                    \left( %
                    \left( %
                    \begin{matrix}
                      b_1 & b_2 \\
                      0 & 0
                    \end{matrix}
                          \right) + %
                          \left( %
                          \begin{matrix}
                            y_1 & 0 \\
                            y_2 & 0
                          \end{matrix}
                                  \right)w %
                                  \right) %
      \\[2ex] =  %
      \left( %
      \begin{matrix}
        a_1b_1 & a_1b_2 \\
        y_1x_2-y_2x_1 & 0
      \end{matrix}
                        \right) + %
                        \left( %
                        \begin{matrix}
                          y_1a_1 & y_1a_2-x_1b_2 \\
                          y_2a_1 & y_2a_2-x_2b_2
                        \end{matrix}
                                   \right) w %
          & \in p\OO %
    \end{array}
  \]
  yields the conditions \( %
  y_1x_2-y_2x_1 = 0 %
  \), %
  \( %
  y_1a_2-x_1b_2 = 0 %
  \), %
  and \( %
  y_2a_2-x_2b_2 = 0 %
  \). %
  This means that the matrix
  \[
    \left( %
      \begin{matrix}
        a_2 & x_1 & x_2 \\
        b_2 & y_1 & y_2 \\
      \end{matrix}
    \right)
  \]
  has rank at most one. So the quotient space~$(A+Fp)/Fp$ has
  dimension at most one.

  The assertions about $\OO p$ are obtained by applying
  the anti-automorphism~$\kappa$. 
\end{proof}

The maximal totally singular subspaces in~$\OO$ (with respect to the
quadratic form~$\N = \N[\OO]$) are just the sets $a\OO$ and~$\OO a$, with
$a\in\OO\smallsetminus\{0\}$ such that $\N(a)=0$, and one knows
$a\OO = \ker(\lambda_{\gal{a}})$ and $\OO a = \ker(\rho_{\gal{a}})$
(see~\ref{vanderBljSpringer}\ref{kerLambdaA},\,\ref{maxTotSing}).

We characterize the subalgebras among these subspaces: 

\begin{lemm}\label{maximalIsotropicSubalgebras}
  Let\/ $a\in\OO\smallsetminus\{0\}$ with $\N(a)=0$. %
  Then the following are equivalent.
  \begin{enumerate}
    \begin{multicols}{2}
    \item $a\OO$ is a subalgebra.
    \item $\OO a$ is a subalgebra.
    \end{multicols}
    \begin{multicols}{2}
    \item $\tr(a)=0$.
    \item $a^2 = 0 \ne a$. %
    \end{multicols}
  \item After identification of\/~$\OO$ with a double
    of\/~$F^{2\times2}$, we have~$a$ in the $\Aut[F]\OO$-orbit
    of\/~$n_0$.
  \end{enumerate}
  \noindent%
  If one (and then any) of these conditions is satisfied then the
  algebras $a\OO$ and $\OO a$ are anti-isomorphic (via~$\kappa$) but
  not isomorphic.

  These subalgebras have dimension~$4$ and contain no invertible
  elements at all. None of these subalgebras is associative
  (see~\ref{fourdimNotAssoc}), and none of them is contained
  in~$1^\perp$.
\end{lemm}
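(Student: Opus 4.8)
The plan is to prove the cycle of implications $(c)\Leftrightarrow(d)$, $(c)\Rightarrow(e)\Rightarrow(a)\Rightarrow(c)$, together with $(e)\Rightarrow(b)\Rightarrow(c)$, and then to read off the remaining assertions. Note first that $a\ne0$ with $\N(a)=0$ forces~$\OO$ to be split (it has a divisor of zero), so~$\OO$ may be written as a double of $F^{2\times2}$ and the results of Section~\ref{sec:singular} apply. For $(c)\Leftrightarrow(d)$ I would use that~$a$ is a root of $X^2-\tr(a)X+\N(a)$, so with $\N(a)=0$ one has $a^2=\tr(a)a$, which is~$0$ exactly when $\tr(a)=0$ because $a\ne0$. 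For $(c)\Rightarrow(e)$ I would note that a nonzero scalar has nonzero norm, hence $a\notin F$, so~$a$ and~$n_0$ are non-central elements with the same norm~$0$ and trace~$0$, and lie in one orbit under~$\Aut[F]\OO$ by~\ref{normAndTraceCharacterizeOrbits}. For $(e)\Rightarrow(a)$ and $(e)\Rightarrow(b)$ I would write $a=\varphi(n_0)$ with $\varphi\in\Aut[F]\OO$; then $a\OO=\varphi(n_0\OO)$ and $\OO a=\varphi(\OO n_0)$, and $n_0\OO$ is a subalgebra by~\ref{fourdimNotAssoc}, while $\OO n_0=\kappa(n_0\OO)$ is a subalgebra because~$\kappa$ is an anti-automorphism. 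For $(a)\Rightarrow(c)$ I would argue by contradiction: if $\tr(a)\ne0$, then~\ref{subalgebrasInpOO}, applied with $p=a$, forces every subalgebra contained in~$a\OO$ to have dimension at most~$2$; this is absurd, since~$a\OO$ itself would be such a subalgebra, of dimension~$4$ (a maximal totally singular subspace, see~\ref{vanderBljSpringer}\ref{maxTotSing}). The implication $(b)\Rightarrow(c)$ is obtained in the same way from the part of~\ref{subalgebrasInpOO} dealing with~$\OO p$.

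Granting the equivalent conditions, the supplementary assertions follow quickly. The subspaces~$a\OO$ and~$\OO a$ are maximal totally singular, hence of dimension~$4$, and all their elements have norm~$0$ and are therefore non-invertible by~\ref{propertiesCA}\ref{inverse}. Since $\tr(a)=0$ one has $\gal a=-a$, whence $\kappa(a\OO)=\smallset{\kappa(y)\gal a}{y\in\OO}=\OO\gal a=\OO a$; as~$\kappa$ is an anti-automorphism of~$\OO$, its restriction is an anti-isomorphism from~$a\OO$ onto~$\OO a$. To see that neither algebra lies in~$1^\perp$ I would use $(ax|1)=(x|\gal a)=-(x|a)$ (from~\ref{propertiesCA}\ref{adjoint}): were $a\OO\subseteq1^\perp$ this would give $a\in\OO^\perp=\{0\}$, and the case of~$\OO a$ is symmetric. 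Non-associativity of~$a\OO$ follows from~\ref{fourdimNotAssoc} together with $a\OO=\varphi(n_0\OO)$, and then~$\OO a$ is non-associative as the anti-isomorphic image of~$a\OO$.

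The last claim, that~$a\OO$ and~$\OO a$ are \emph{not} isomorphic, is the one I expect to be the main obstacle, since the two algebras share all the obvious invariants. The idea is to separate them by the dimensions of their left and right annihilators $\mathrm{l}(B)=\smallset{u\in B}{uB=\{0\}}$ and $\mathrm{r}(B)=\smallset{v\in B}{Bv=\{0\}}$. On one hand, $a\OO=\ker\lambda_{\gal a}=\ker\lambda_a$ by~\ref{vanderBljSpringer}\ref{kerLambdaA} and $\gal a=-a$, so $a(a\OO)=\{0\}$; since $a=a\cdot1\in a\OO$, this gives $a\in\mathrm{l}(a\OO)$, so $\dim\mathrm{l}(a\OO)\ge1$. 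On the other hand I would show $\mathrm{r}(a\OO)=\{0\}$; by the isomorphism~$\varphi$ it suffices to treat $a=n_0$, where the explicit product formula for~$n_0\OO$ (available once it is written in coordinates as in~\ref{fourdimNotAssoc}) immediately yields $\mathrm{r}(n_0\OO)=\{0\}$. Granting this, $a\OO$ admits no anti-automorphism, because such a map would carry~$\mathrm{l}(a\OO)$ onto~$\mathrm{r}(a\OO)$ and hence equate their dimensions; but an isomorphism $a\OO\to\OO a$ composed with the anti-isomorphism $\OO a\to a\OO$ supplied by~$\kappa$ would be exactly such an anti-automorphism, so no such isomorphism can exist. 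The only genuinely computational step is the verification $\mathrm{r}(n_0\OO)=\{0\}$; everything else is formal, using the identities and orbit results already established.
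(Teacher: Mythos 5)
Your proposal is correct, and its skeleton largely matches the paper's proof: both get $(a)\Rightarrow(c)$ and $(b)\Rightarrow(c)$ from~\ref{subalgebrasInpOO} (a $4$-dimensional subalgebra cannot sit inside $p\OO$ when $\tr(p)\ne0$), and both separate $a\OO$ from $\OO a$ by the asymmetry of annihilators (nonzero left annihilator, zero right annihilator). The genuine divergence is in $(c)\Rightarrow(a)$: the paper proves closure of $a\OO$ under multiplication directly, for arbitrary $a$ with $a^2=0$, via the Moufang identity $a\bigl((ax)(ay)\bigr)=\bigl((a(ax))a\bigr)y=\bigl((a^2x)a\bigr)y=0$ together with $a\OO=\ker\lambda_a$, whereas you first move $a$ into the orbit of $n_0$ by~\ref{normAndTraceCharacterizeOrbits} and then invoke~\ref{fourdimNotAssoc}/\ref{rightIdealConstruction} for the single representative $n_0\OO$. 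Both are legitimate; the Moufang computation is coordinate-free and self-contained, while your route leans on the orbit machinery you need anyway for condition~(e). The same trade-off reappears at the end: you reduce $\mathrm{r}(n_0\OO)=\{0\}$ to an explicit matrix computation that you assert but do not carry out (it is true and routine --- e.g.\ multiplying on the left by $p_0$, $n_0w$ and $\gal{p_0}w$ kills all four coordinates of a putative right annihilator), whereas the paper gets the statement for general $a$ at once from~\ref{vanderBljSpringer}: the condition $(a\OO)b=\{0\}$ would mean $a\OO\subseteq\ker\rho_b=\OO\gal{b}$, and a subspace of the form $a\OO$ never coincides with one of the form $\OO c$. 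You should either perform that computation or substitute the conceptual argument; with that filled in, the proof is complete. Your direct argument that $a\OO\not\subseteq1^\perp$ via $(ax|1)=-(x|a)$ and non-degeneracy of the polar form is a small simplification over the paper's appeal to the Witt index of $\N|_{1^\perp}$.
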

\begin{proof}
  Since the restriction $\N|_{1^\perp}$ is a non-degenerate quadratic
  form, each totally singular subspace of~$1^\perp$ has dimension at
  most~$3$. Therefore, we have $a\OO\not\subseteq 1^\perp$ for each
  $a\ne0$. %
  If $\tr(a)\ne0$ we apply~\ref{subalgebrasInpOO} to see
  that neither~$a\OO$ nor~$\OO a$ is a subalgebra. 

  So assume~$\tr(a)=0$ now. Then
  $\ker(\lambda_a) = \ker(\lambda_{\gal{a}}) = a\OO$. Using
  $a^2=-\gal{a}a = 0$ and one of Moufang's identities
  (see~\ref{propertiesCA}\ref{MoufangId}) we verify that
  $(a\OO)(a\OO)$ is contained in $\ker(\lambda_a)$ by computing
  $a\left(\strut(ax)(ay)\right) = \left((a\strut(ax))a\right)y = %
  \left((a^2x)a\right)y = 0$. %
  So~$a\OO$ is a subalgebra. %
  Applying the anti-automorphism~$\kappa$ we translate this
  observation to~$\OO a$. Moreover, we see that $a\OO$ and $\OO a$ are
  anti-isomorphic if $\tr(a) = 0 = \N(a)$.

  It remains to show that the algebras $a\OO$ and $\OO a$ are not
  isomorphic. %
  To this end, we note that $a$ is a non-trivial element of~$a\OO$
  such that $a(a\OO) = \{0\}$ while no element~$b$ of
  $a\OO\smallsetminus\{0\}$ satisfies $(a\OO)b = \{0\}$; in fact, the
  kernel $\OO\gal{b}$ of $\rho_b$ never coincides with~$a\OO$
  (see~\ref{vanderBljSpringer}\ref{intersectMaxTotSing}).
\end{proof}

\begin{coro}
  Not every maximal totally singular subspace is a subalgebra
  of\/~$\OO$.  The group $\Aut\OO$ is not transitive on the set of all
  maximal totally singular subspaces. %
  \qed
\end{coro}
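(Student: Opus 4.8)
The plan is to deduce both assertions directly from Lemma~\ref{maximalIsotropicSubalgebras}. First I would invoke part~\textbf{(f)} of that lemma, which produces a specific nonzero element $a$ with $\N(a)=0$ and $\tr(a)=0$ (for instance $a=n_0$ after identifying $\OO$ with a double of $F^{2\times2}$); for this element both $a\OO$ and $\OO a$ are subalgebras. On the other hand, by Theorem~\ref{subalgebrasInpOO}, any element $p$ with $\N(p)=0$ but $\tr(p)\ne0$ gives maximal totally singular subspaces $p\OO$ and $\OO p$ that are \emph{not} subalgebras (they have dimension~$4$ by Theorem~\ref{vanderBljSpringer}\ref{maxTotSing}, while every subalgebra inside them has dimension at most~$2$). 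Since a split octonion algebra certainly contains elements of norm~$0$ with nonzero trace (e.g.\ the idempotent $p_0$), this establishes the first sentence: not every maximal totally singular subspace is a subalgebra.

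For the second sentence, the key observation is that automorphisms send subalgebras to subalgebras and maximal totally singular subspaces to maximal totally singular subspaces. Hence the subset of maximal totally singular subspaces that happen to be subalgebras is invariant under $\Aut\OO$. By the previous paragraph this subset is nonempty (it contains $n_0\OO$) and proper (it does not contain $p_0\OO$). An invariant subset that is neither empty nor everything is incompatible with transitivity, so $\Aut\OO$ cannot act transitively on the set of all maximal totally singular subspaces. Here it does not matter whether one works with $\Aut\OO$ (the $\ZZ$-linear automorphisms) or $\Aut[F]\OO$, since the argument only uses that automorphisms preserve both properties; over the relevant split algebra these coincide anyway by~\ref{propertiesCA}\ref{autSemilinear} when $\dim_F\OO\ge4$.

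There is essentially no obstacle here: the entire content has been front-loaded into Lemmas~\ref{subalgebrasInpOO} and~\ref{maximalIsotropicSubalgebras} and Theorem~\ref{vanderBljSpringer}. The only point requiring a word of care is making sure the two witnessing subspaces $n_0\OO$ and $p_0\OO$ are genuinely distinct as maximal totally singular subspaces --- but this is immediate from Theorem~\ref{vanderBljSpringer}\ref{intersectMaxTotSing}(i), since $Fn_0\ne Fp_0$, so $n_0\OO\ne p_0\OO$. Thus the corollary follows with no further computation.
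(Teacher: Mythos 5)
Your argument is correct and is exactly the one the paper intends by marking the corollary with \qed: the characterization in~\ref{maximalIsotropicSubalgebras} (together with~\ref{subalgebrasInpOO}) shows $p_0\OO$ is a maximal totally singular subspace that is not a subalgebra while $n_0\OO$ is one, and since automorphisms preserve both properties, transitivity fails. Your extra checks (that the two subspaces are distinct, and that the choice between $\Aut\OO$ and $\Aut[F]\OO$ is immaterial) are sound but not needed beyond what the preceding lemmas already give.
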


\begin{lemm}\label{neutralElement}%
  If\/~$A$ is a subalgebra of\/~$\OO$ containing an element~$e$ that
  is a neutral element for the multiplication in~$A$ then either
  $e\in\{0,1\}$, or~$\OO$ is split and~$e$ belongs to the unique
  $\Aut[F]\OO$-orbit of elements of trace~$1$ and norm~$0$.
  In the latter case, we have $A=Fe$. 
\end{lemm}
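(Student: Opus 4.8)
The plan is to first pin down what kind of element $e$ must be, and then — in the one case that is not immediately settled — to force the ambient subalgebra $A$ down to the line $Fe$. Setting $x=y=e$ in the identity ``$ex=xe=x$ for all $x\in A$'' shows that $e$ is idempotent, $e^{2}=e$. Since $e$ is a root of $X^{2}-\tr(e)X+\N(e)\in F[X]$ (see~\ref{propertiesCA}), comparing this with $e^{2}=e$ gives $(\tr(e)-1)\,e=\N(e)\,1$. If $\tr(e)\neq1$, this exhibits $e$ as a scalar, and the only idempotents in $F1$ are $0$ and $1$; that is the first alternative of the statement. So from now on we may assume $\tr(e)=1$, and then the displayed relation forces $\N(e)=0$.

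In this case $\tr(e)=1\neq0$ gives $e\neq0$, so $\OO$ contains a non-zero element of norm $0$ and is split. Also $e\notin F1$, because a scalar $\lambda1$ has trace $2\lambda$ and norm $\lambda^{2}$, and $2\lambda=1$ is incompatible with $\lambda^{2}=0$ over any field. Hence $e$ is a non-central element with $\tr(e)=1$ and $\N(e)=0$, and by~\ref{normAndTraceCharacterizeOrbits} all such elements lie in a single $\Aut[F]\OO$-orbit (which is non-empty, as $e$ witnesses).

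It remains to prove $A=Fe$ in this case. Since $e$ is neutral in $A$, every $a\in A$ satisfies $a=ea\in e\OO$ and $a=ae\in\OO e$, so $A\subseteq e\OO\cap\OO e$; thus it suffices to show $e\OO\cap\OO e=Fe$. Here we are free to replace $e$ by any element of its orbit (using~\ref{normAndTraceCharacterizeOrbits} again), so we identify $\OO$ with the double $F^{2\times2}+F^{2\times2}w$ fixed in this section and take $e=p_{0}$. The subspace $p_{0}\OO$ is computed in the proof of~\ref{subalgebrasInpOO}, and $\OO p_{0}$ is obtained from it by applying the anti-automorphism $\kappa$; comparing the two explicit descriptions gives $p_{0}\OO\cap\OO p_{0}=Fp_{0}$. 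Hence $A\subseteq Fe$, and since $e\in A$ we conclude $A=Fe$. The whole argument is short, and the only point requiring an explicit (if mild) matrix computation is this last intersection; alternatively one could invoke~\ref{subalgebrasInpOO} to get $\dim A\le2$ and then rule out $\dim A=2$ from the same descriptions, but carrying out the intersection once is cleanest.
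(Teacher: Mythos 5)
Your proof is correct, and its first half (idempotency of $e$, the comparison with the minimal polynomial $X^{2}-\tr(e)X+\N(e)$ forcing $\tr(e)=1$ and $\N(e)=0$ unless $e\in\{0,1\}$, and the appeal to~\ref{normAndTraceCharacterizeOrbits} for the orbit statement) is essentially the paper's argument. You diverge in the final step: the paper puts $A$ inside the centralizer $\C[\OO]{e}$, computes $\N(e-\gal{e})=-1$, invokes~\ref{centralizers} to get $\dim\C[\OO]{e}=2$, hence $A\le F+Fe$, and concludes $A=Ae\le(F+Fe)e=Fe$. You instead use $A\subseteq e\OO\cap\OO e$ and show that this intersection is the line $Fe$; that is an equally clean route, and in fact no matrix computation is needed at all, since~\ref{vanderBljSpringer}\ref{intersectMaxTotSing} gives $e\OO\cap\OO e=Fe^{2}=Fe$ directly from $e^{2}=e\ne0$. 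One small inaccuracy in your write-up: since $\kappa$ is an anti-automorphism, $\kappa(p_{0}\OO)=\OO\,\gal{p_{0}}$, not $\OO p_{0}$, so ``apply $\kappa$ to the description of $p_{0}\OO$'' does not literally produce $\OO p_{0}$; you should either compute $\OO p_{0}=\ker\rho_{\gal{p_{0}}}$ directly (which is easy and does yield $p_{0}\OO\cap\OO p_{0}=Fp_{0}$) or, better, quote~\ref{vanderBljSpringer} as above. This slip does not affect the validity of the argument.
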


\begin{proof}
  In any case, the subspace $F+Fe$ is an associative and commutative
  subalgebra of~$\OO$. %
  Assume $e\notin\{0,1\}$.
  
  The element~$e$ is an idempotent, and centralizes~$A$. From $e^2=e$
  and $e\notin\{0,1\}$ we infer that the minimal polynomial of~$e$ in
  $F+Fe$ is $X^2-X$. Comparing this with the general form of the
  minimal polynomial $X^2-\tr(a)X+\N(a)$ for any element $a\in\OO$, we
  find $\tr(e)=1$ and $\N(e)=0$. This implies $e\notin F$, and~$e$ is
  a non-trivial element of norm~$0$ in~$\OO$. %
  In particular, the algebra~$\OO$ is split. %
  From~\ref{normAndTraceCharacterizeOrbits} %
  we know that the non-central elements of given norm and trace form a
  single orbit under $\Aut[F]\OO$.

  Now $A$ is contained in the centralizer of the element~$e$ of
  norm~$0$ and trace~$1$. We compute $\N(e-\gal{e}) = -1$ and
  use~\ref{centralizers} to obtain $\dim\C[\OO]e=2$. This yields
  $A \le \C[\OO]e = F+Fe$, and $A = Fe$ follows because
  $A = Ae \le (F+Fe)e = Fe$.
\end{proof}

\goodbreak%
\begin{prop}\label{dim3inPu}
  Let\/~$A$ be a $3$-dimensional subalgebra of\/~$\OO$, and assume
  $A\subseteq 1^\perp$. %
  Then~$\OO$ is split; we write $\OO = F^{2\times2} + F^{2\times2}w$
  as double of the split quaternion algebra $F^{2\times2}$, with
  $w^2=1$. %
  \begin{enumerate}
  \item If $\Char{F}\ne2$ then~$A$ is a Heisenberg algebra.
  \item If $\Char{F}=2$ then either~$A$ is a Heisenberg algebra, or
    $A = F+V$ with a two-dimensional subalgebra~$V$ such that $v'v=0$
    holds for all $v',v\in V$.
  \item The Heisenberg algebras form a single orbit under
    $\Aut[F]\OO$, represented by ${n_0\OO\cap \OO n_0}$.
  \item The subalgebras of the form $F+V$ with a
    two-dimensional subalgebra~$V$ with trivial multiplication form a
    single orbit under $\Aut[F]\OO$, represented by ${F+Fn_0+Fn_0w}$. 
  \end{enumerate}
\end{prop}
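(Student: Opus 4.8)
The plan is to work inside a $3$-dimensional subalgebra $A\subseteq 1^\perp$ and exploit that such an $A$ is totally isotropic (by \ref{pureAlgebrasIsotropic}), hence the restriction of $\N$ is a quadratic form on a $3$-space whose polar form vanishes; in odd characteristic this forces $\N|_A=0$, so $A$ is totally singular, while in characteristic $2$ the form $\N|_A$ is additive and semilinear (a ``$\mathfrak p$-form''), so its radical is a subspace and $\N|_A$ is either zero or has a $1$- or $2$-dimensional kernel. First I would dispose of the case where $A$ contains an idempotent $e\ne 0$: by \ref{neutralElement} (applied to $A$ with its own multiplication, after checking $e$ is neutral for $A$ — but here we instead use that any idempotent in $1^\perp$ has trace $1$, norm $0$, and then $\C[\OO]e$ is $2$-dimensional by \ref{centralizers}), this cannot happen in a $3$-dimensional subalgebra; so $A$ has no nonzero idempotent. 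Splitness of $\OO$ follows as soon as we exhibit a nonzero element of norm $0$ in $A$, which we get for free in the totally singular case, and in the remaining characteristic-$2$ case from the structure we extract below.

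Next, for the totally singular situation (all of (a), and the Heisenberg alternative of (b)), I would argue that $A$ is a totally singular $3$-space, hence $A\subseteq a\OO\cap\OO b$ for uniquely determined $a\OO$ and $\OO b$ (as observed in the paragraph after \ref{vanderBljSpringer}); by \ref{maximalIsotropicSubalgebras} the ambient $a\OO$ need not be a subalgebra, but the $3$-dimensional intersections $a\OO\cap\OO b$ come in exactly the two types described in \ref{vanderBljSpringer}\ref{intersectMaxTotSing}: either $ab\ne 0$ (and the intersection is $Fab$, dimension $1$ — too small) or $ab=0$, giving $\dim(a\OO\cap\OO b)=3$. So $A=a\OO\cap\OO b$ with $ab=0$, and then one computes the multiplication on this $3$-space directly: picking $a$ in the orbit of $n_0$ (possible by \ref{maximalIsotropicSubalgebras}, since $a$ is a nonzero nilpotent — here one must check $\tr(a)=0$, which holds because $A\subseteq 1^\perp$), and using $ab=0$ with \ref{threedimPureSubalgebra}(ii), we identify $A$ with $n_0\OO\cap\OO n_0$, which is a Heisenberg algebra. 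This simultaneously proves (a), the Heisenberg case of (b), and (c): any two such $A$ are carried onto $n_0\OO\cap\OO n_0$ because the map taking $a$ to $n_0$ extends to an automorphism of $\OO$ (by \ref{normAndTraceCharacterizeOrbits}, nilpotents form one orbit), and that automorphism must then send $a\OO\cap\OO b$ to $n_0\OO\cap\OO b'$ for some nilpotent $b'$ with $n_0b'=0$; a further automorphism fixing $n_0$ and adjusting $b'$ (using \ref{stabilizerH}-type maps, or \ref{vanderBljSpringer}\ref{intersectMaxTotSing}(e) which pins the intersection down once $ab=0$) finishes it.

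The genuinely new case is the characteristic-$2$ possibility in (b): $A\subseteq 1^\perp$ but $\N|_A\ne 0$. Here $\N|_A$ is a nonzero additive-semilinear form on a $3$-space, so its radical $V:=\{v\in A : \N(v)=0\}$ — which is a subspace precisely because $\Char F=2$ — has dimension $1$ or $2$. I would show $\dim V=2$ as follows: $V$ is a totally singular subspace of the subalgebra $A$; if $\dim V\le 1$ then $A$ would be spanned by (at most one) singular vector together with anisotropic vectors, but then some $a\in A$ has $\N(a)\ne 0$, hence $a$ is invertible, hence $1\in A$ by \ref{1inSub}, contradicting $A\subseteq 1^\perp$. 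So $\dim V=2$, and picking $a\in A\setminus V$ we have $\N(a)\ne 0$; but again $\N(a)\ne 0$ makes $a$ invertible and forces $1\in A$ — unless $\dim A\le 2$. This contradiction shows that, in fact, there is \emph{no} anisotropic vector in $A$ at all, i.e.\ $V=A$ and we are back in the totally singular case — \textbf{so I expect the actual resolution to be that the ``$F+V$'' alternative arises only when $1\in A$ is forced}, i.e.\ $A$ itself is not contained in $1^\perp$ in that reading, OR the statement intends $V\subseteq 1^\perp$ two-dimensional with trivial multiplication and $A=F\cdot 1\oplus V$ where the ``$1$'' sits in $A$; re-examining, since the hypothesis is $A\subseteq 1^\perp$, the only way $F\subseteq A$ is if $F\subseteq 1^\perp$, which happens exactly when $\Char F=2$ (as $(1|1)=2=0$). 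That is the key point: in characteristic $2$, $1\in 1^\perp$, so a subalgebra containing $1$ can still lie in $1^\perp$. Thus the case split is: either $1\notin A$ (forcing $A$ totally singular, hence Heisenberg by the argument above, since a $3$-dimensional totally singular subalgebra not containing $1$ with no idempotent and $\N|_A=0$ is exactly of type $a\OO\cap\OO b$ with $ab=0$), or $1\in A$, in which case $A=F+V$ with $V$ a $2$-dimensional subspace of $1^\perp$ closed enough under multiplication that $A$ is a subalgebra; writing out $(f+v')(g+v)=fg+fv+gv'+v'v$ one sees the only constraint is $v'v\in A\cap(\text{what})$, and using $V\subseteq 1^\perp$ together with \ref{pureAlgebrasIsotropic}-style bilinearity one forces $v'v=0$ for all $v',v\in V$ — this is the computational heart and the main obstacle, since it requires ruling out $v'v\in F\setminus\{0\}$ and $v'v\notin V$. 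Once $v'v=0$, $V$ is a $2$-dimensional totally singular subalgebra with trivial multiplication, realized up to $\Aut[F]\OO$ as $Fn_0+Fn_0w$ (type \ref{Heisenberg}\ref{twoDimNull}), and since nilpotent pairs $a,b$ with $ab=0$ spanning such a $V$ form one orbit — by \ref{normAndTraceCharacterizeOrbits} applied to move one generator to $n_0$, then a stabilizer argument as in \ref{stabilizerH} to move the second into place — we get (d) with representative $F+Fn_0+Fn_0w$. The main obstacle throughout is the characteristic-$2$ bookkeeping in the last paragraph: precisely, showing $v'v=0$ and organizing the orbit argument for $V$ uniformly with the Heisenberg orbit argument of (c).
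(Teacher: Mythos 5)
Your overall strategy (reduce to the totally singular case via \ref{pureAlgebrasIsotropic}, then locate $A$ inside the maximal totally singular subspaces of \ref{vanderBljSpringer}, treating characteristic two separately) is reasonable, but there are two genuine gaps. First, in the totally singular case you correctly arrive at $A=a\OO\cap\OO b$ with $ab=0$ and $\tr(a)=\tr(b)=0$ (the latter follows from \ref{subalgebrasInpOO}, not, as you write, ``because $A\subseteq 1^\perp$'' --- $a$ and $b$ need not lie in $A$), but you then silently assume $Fa=Fb$. If $Fa\ne Fb$ one obtains the algebras of type \ref{allTotallySingularSubalgebras}\ref{mOOcapOOn}, such as $(p_0w)\OO\cap\OO(n_0w)=Fp_0+Fp_0w+Fn_0w$, which are \emph{not} Heisenberg; your ``further automorphism fixing $n_0$ and adjusting $b'$'' cannot repair this, since automorphisms preserve the linear (in)dependence of $a$ and $b$. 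That case has to be excluded by observing that such algebras contain elements of non-zero trace (e.g.\ the idempotent $p_0$), contradicting $A\subseteq 1^\perp$; you set up an idempotent-free observation at the start but never apply it here. The paper's proof sidesteps the issue entirely: if some product $ab$ of elements of $A$ is non-zero then $A=Fa+Fb+Fab$ is Heisenberg by \ref{Heisenberg}, and if all products vanish one reaches a contradiction by computing inside $n_0\OO\cap\OO n_0$.

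Second, the characteristic-two case with $1\in A$ is not actually proved. You explicitly defer the key step ($v'v=0$ for all $v',v\in V$) as ``the computational heart and the main obstacle'', and you do not say how $V$ is to be chosen: an arbitrary complement of $F$ in $A$ will not have trivial multiplication (e.g.\ $V=Fn_0+F(1+n_0w)$ contains the non-nilpotent element $1+n_0w$), so $V$ must be the radical $Q=\set{x\in A}{\N(x)=0}$, and one must show $\dim Q=2$. The missing idea is the one the paper uses: in characteristic two a subalgebra contained in $1^\perp$ is automatically commutative (since $xy=\gal{xy}=\gal{y}\,\gal{x}=yx$), so after normalizing a singular element to $n_0$ one has $A\subseteq\C[\OO]{n_0}=F+Fn_0+F^{2\times2}w$, which pins down $A=F+Fn_0+Fyw$ with $yn_0=0$ and reduces the vanishing of the products to a two-line computation. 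Your intermediate discussion of the radical also contains a false step: in characteristic two $1\in1^\perp$, so ``$1\in A$ contradicts $A\subseteq1^\perp$'' is not available (you notice this later, but the retraction leaves the dichotomy without a valid derivation). Parts (c) and (d) are essentially fine once the classification in (a)/(b) is repaired, since they reduce to \ref{normAndTraceCharacterizeOrbits} and \ref{mnOrbit}.
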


\begin{proof}
  In each of the following cases, we will see that $A\smallsetminus\{0\}$
  contains elements of norm~$0$. So~$\OO$ is split. %
  
  Assume first that $\Char{F} \ne 2$. %
  From~\ref{pureAlgebrasIsotropic} we then know that $A$ is totally
  isotropic, and infer that~$A$ is totally singular. %
  If there are $a,b\in A$ such that $ab\ne0$ then $A={Fa+Fb+Fab}$ is a
  Heisenberg algebra, see~\ref{Heisenberg}. So assume that $ab=0$ for
  all $a,b\in A$. Up to an automorphism of~$\OO$
  (see~\ref{normAndTraceCharacterizeOrbits}), we may assume that
  $n_0=\left(
    \begin{smallmatrix}
      0 & 1 \\
      0 & 0
    \end{smallmatrix}\right)$ lies in~$A$. %
  Then $A$ is contained in the intersection of the left annihilator %
  $\set{c+xw}{cn_0=0,x\gal{n_0}=0} =
  F\gal{p_0}+Fn_0+(F\gal{p_0}+Fn_0)w = \OO n_0$ with the right
  annihilator %
  $\set{c+xw}{n_0c=0,x{n_0}=0} = F{p_0}+Fn_0+(F\gal{p_0}+Fn_0)w =
  n_0\OO$, see~\ref{vanderBljSpringer}\ref{kerLambdaA}.  %
  That intersection is the $3$-dimensional subspace
  $n_0\OO\cap \OO n_0 = Fn_0+(F\gal{p_0}+Fn_0)w$, and must coincide
  with~$A$. However, the product $(n_0w)(\gal{p_0}w) = p_0n_0 = n_0$
  is not zero. This contradiction shows that~$A$ is a Heisenberg
  algebra. %

  If $\Char{F}=2$ then $1$ lies in~$1^\perp$, and may thus be a member
  of~$A$. In that case, the restriction of~$\N$ to~$A$ cannot be
  non-singular because otherwise~$A$ would contain an extension field
  of degree two, and~$\dim_FA$ would be even. If $1\notin A$ then
  $\N(A)=\{0\}$ anyway, see~\ref{1inSub}. So we may assume $n_0\in A$
  (see~\ref{normAndTraceCharacterizeOrbits}). %
  We also note that $A\subseteq 1^\perp$ is commutative if
  $\Char{F}=2$ because $xy = \gal{xy} = \gal{y}\gal{x} = yx$ holds for
  all $x,y\in A$. %
  If $1\notin A$ then $\N(A)=\{0\}$, and we invoke~\ref{Heisenberg} as
  above to obtain that $A$ is a Heisenberg algebra. %
  If $1\in A$ %
  then $A\subseteq \C[\OO]{n_0} = F+Fn_0+F^{2\times2}w$, and there
  exists $y\in F^{2\times2}\smallsetminus\{0\}$ with $yn_0\in Fy$ such
  that $A=F+Fn_0+Fyw$. Then $y\in Fn_0+F\gal{p_0}$, and we obtain
  $0 = n_0^2 = (yw)^2 = n_0(yw) = (yw)n_0$. %
  In particular, the subspace $Fn_0+Fyw$ forms a subalgebra with
  trivial multiplication.

  It remains to show that the isomorphism type of the subalgebras in
  question determines their orbit under $\Aut[F]\OO$. %
  If~$A$ is a Heisenberg algebra, we may (up to an automorphism
  of~$\OO$) assume that $n_0$ lies in~$A$, and %
  that $n_0A = \{0\} = An_0$.  We then find that $A$ coincides with
  the two-sided annihilator
  $n_0\OO\cap\OO n_0 = Fn_0+(F\gal{p_0}+Fn_0)w$,
  see~\ref{vanderBljSpringer}\ref{kerLambdaA}. %
  Therefore, these Heisenberg algebras are in a single orbit. %

  Finally, consider a two-dimensional subalgebra $V$ with trivial
  multiplication. %
  Every element of $V\smallsetminus\{0\}$ is in the $\Aut[F]\OO$-orbit
  of~$n_0$, we may (and will) assume $n_0\in V$. %
  Then $V = Fn_0+Fy$, where~$y$ belongs to the two-sided annihilator %
  $n_0\OO\cap\OO n_0 = {Fn_0+(F\gal{p_0}+Fn_0})w$ of~$n_0$. If
  $n_0\in V \not\subseteq Fn_0+Fn_0w$ then~$V$ contains an element of
  the form $(\gal{p_0}+rn_0)w$, with $r\in F$. Using $s = 1$ and
  $t \coloneqq \left(
    \begin{smallmatrix}
      0 & 1 \\
      -1 & r
    \end{smallmatrix}
  \right) \in \HH$,
  we obtain an automorphism $\alpha_{1,t}\colon a+xw \mapsto a+(tx)w$
  of~$\OO$ with $\alpha_{1,t}(n_0)=n_0$ and
  $\alpha_{1,t}((\gal{p_0}+rn_0)w) = n_0w$, as required. 
\end{proof}

\begin{lemm}\label{pnOrbit}
  Let\/ $p,n$ be two non-trivial elements  (necessarily of norm~$0$) %
  with $p^2=p\ne1$, $pn=n$,
  $n^2=0=np$. Then $Fp+Fn$ is a  subalgebra. These subalgebras form a
  single orbit under~$\Aut[F]\OO$. %
\end{lemm}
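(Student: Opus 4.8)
The plan is to settle the (routine) ``subalgebra'' claim directly, and then to obtain the orbit statement from the Skolem--Noether type Lemma~\ref{SkolemNoether} by showing that every subalgebra $Fp+Fn$ of the type considered embeds into a \emph{split} quaternion subalgebra of~$\OO$, sitting there as a copy of one fixed two-dimensional algebra.

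First I would collect the elementary facts. Since $p,n\ne0$ and $p\ne1$, while the only idempotents in~$F1$ are $0,1$ and the only element of~$F1$ of square~$0$ is~$0$, both $p$ and~$n$ lie outside~$F1$; comparing the minimal polynomials $X^2-X$ and~$X^2$ with the generic quadratic $X^2-\tr(x)X+\N(x)$ then yields $\tr(p)=1$, $\N(p)=0$ and $\tr(n)=0=\N(n)$, so $\OO$ is split because it contains the non-zero element~$n$ of norm~$0$. The products of basis vectors are $p^2=p$, $pn=n$, $np=0$, $n^2=0$, all lying in $Fp+Fn$, and $p,n$ are independent (from $n=\lambda p$ with $\lambda\ne0$ one would get $0=n^2=\lambda^2p$); hence $Fp+Fn$ is a two-dimensional subalgebra. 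In the model $\OO=F^{2\times2}+F^{2\times2}w$ with $w^2=1$, the elements $p_0=\left(\begin{smallmatrix}1&0\\0&0\end{smallmatrix}\right)$ and $n_0=\left(\begin{smallmatrix}0&1\\0&0\end{smallmatrix}\right)$ satisfy these same relations, so $Fp_0+Fn_0$ is one of our subalgebras, and it suffices to show that every $Fp+Fn$ lies in its $\Aut[F]\OO$-orbit.

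Next I would normalise. Since $p$ is non-central with $\tr(p)=1$ and $\N(p)=0$, Lemma~\ref{normAndTraceCharacterizeOrbits} lets me assume $p=p_0$; writing $n=a+xw$, the equations $p_0n=n$ and $np_0=0$ then force $a=a_2n_0$ and $x=\left(\begin{smallmatrix}x_1&0\\x_2&0\end{smallmatrix}\right)$ with $(a_2,x_1,x_2)\ne(0,0,0)$. The heart of the argument is to run a split quaternion subalgebra of~$\OO$ through $p_0$ and~$n$. I would look for~$m$ among the $b+yw$ solving $p_0m=0$, $mp_0=m$ (these are $b=b_3\left(\begin{smallmatrix}0&0\\1&0\end{smallmatrix}\right)$, $y=\left(\begin{smallmatrix}0&y_2\\0&y_4\end{smallmatrix}\right)$) and compute, from the doubling multiplication $(a+xw)(b+yw)=ab+\gal{y}x+(ya+x\gal{b})w$, that $nm=(a_2b_3+x_1y_4-x_2y_2)\,p_0$ and $mn=(a_2b_3+x_1y_4-x_2y_2)\,\gal{p_0}$. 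As $(a_2,x_1,x_2)\ne0$, a suitable choice of $(b_3,y_2,y_4)$ makes the coefficient equal to~$1$, so that $nm=p_0$ and $mn=\gal{p_0}$; moreover $m^2=0$ is forced by $p_0m=0$, $mp_0=m$. Then $p_0,\gal{p_0},n,m$ are linearly independent and multiply like the matrix units $E_{11},E_{22},E_{12},E_{21}$, so their span~$H$ is a split quaternion subalgebra of~$\OO$ containing $Fp_0+Fn$.

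To finish, the $F$-linear isomorphism $H\to F^{2\times2}\subseteq\OO$ given by $p_0\mapsto E_{11}$, $\gal{p_0}\mapsto E_{22}$, $n\mapsto E_{12}$, $m\mapsto E_{21}$ is a homomorphism of algebras carrying $Fp_0+Fn$ onto $FE_{11}+FE_{12}=Fp_0+Fn_0$; since~$H$ is split, Lemma~\ref{SkolemNoether} extends it to an automorphism of~$\OO$, which maps $Fp_0+Fn$ onto $Fp_0+Fn_0$. I expect the only real work to be the construction of~$m$: pinning down, in the doubled model, the subspace on which $p_0$ acts as~$0$ on the left and as~$1$ on the right, and checking that left multiplication by~$n$ sends it onto $Fp_0$. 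All the rest is bookkeeping with the multiplication formula, and nothing in the argument is sensitive to $\Char F$.
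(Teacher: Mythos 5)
Your argument is correct, but it organizes the reduction differently from the paper. Both proofs normalize $p$ to $p_0$ and then invoke the Skolem--Noether type Lemma~\ref{SkolemNoether}, but the quaternion subalgebra plays opposite roles in the two arguments. The paper builds a split quaternion subalgebra $H=(F+Fp)+(F+Fp)u$ through $p$ alone, using an anisotropic $u\in\{1,p,n\}^\perp$, checks via $(pu|n)=(u|\gal{p}n)=0$ that $n$ lies in $H^\perp$, moves $H$ onto $\HH=F^{2\times2}$ with $p\mapsto p_0$, and finishes with an explicit automorphism $a+xw\mapsto a+(sx)w$, $s\in\SL[2]F$, arriving at the representative $Fp_0+Fp_0w$ --- so $n$ ends up \emph{orthogonal} to the quaternion algebra. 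You instead manufacture a fourth element $m$ (with $p_0m=0$, $mp_0=m$, $nm=p_0$, $mn=\gal{p_0}$) so that $p_0,\gal{p_0},n,m$ multiply like the matrix units; that is, you run a split quaternion subalgebra through \emph{both} $p$ and $n$, after which a single application of \ref{SkolemNoether} finishes the job and lands on $Fp_0+Fn_0$. Your computations check out: the normalization of $n$ to $a_2n_0+\left(\begin{smallmatrix}x_1&0\\x_2&0\end{smallmatrix}\right)w$ is forced by $p_0n=n$ and $np_0=0$, the products $nm$ and $mn$ are as you state, and the equation $a_2b_3+x_1y_4-x_2y_2=1$ is solvable precisely because $(a_2,x_1,x_2)\ne(0,0,0)$. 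Your route costs more explicit bookkeeping with the doubling formula (the full matrix-unit table must be verified, and $m^2=0$ is really a coordinate computation rather than a formal consequence of the two stated relations), but it buys the additional information that every such $Fp+Fn$ is contained in a split quaternion subalgebra --- consistent with the chain $Fn+Fp\subset T\subset F^{2\times2}$ in Figure~\ref{fig:lattice} --- and it produces directly the representative $Fp_0+Fn_0$ used in the example following the lemma. The two representatives lie in the same orbit, so both proofs establish the statement.
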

\begin{proof}
  The vector subspace $E \coloneqq F+Fp$ is a (split) composition
  algebra. The subspace $\{1,p,n\}^\perp$ has dimension~$5$, and
  contains an element~$u$ with $\N(u)\ne0$. So $H\coloneqq E+Eu$ is a (split)
  quaternion algebra. %
  We note that $(pu|n) = (u|\gal{p}n) = (u|(1-p)n) = (u|0) = 0$, and
  infer that~$n$ lies in~$H^\perp$. 

  From~\ref{SkolemNoether} we know that there is $\alpha\in\Aut[F]\OO$
  with $\alpha(H) = \HH $ (${}=F^{2\times2}$).  %
  Then $\alpha(p)$ is a conjugate of $p_0 \coloneqq \left(
    \begin{smallmatrix}
      1 & 0 \\
      0 & 0 
    \end{smallmatrix}\right)$ in~$\HH $, and the corresponding inner
  automorphism of~$\HH $ extends to an automorphism~$\beta$
  of~$\OO$. So we may assume $p=p_0$ and $H=\HH $, while
  $n\in \HH ^\perp$. Then there exists $y\in \HH $ such that $n=yw$,
  and $0 = np = (yw)p = (y\gal{p})w$ yields $y\gal{p} = 0$. This means
  that $y=\left(
    \begin{smallmatrix}
      y_1 & 0 \\
      y_2 & 0 
    \end{smallmatrix}\right)$. %
  There exists $s\in \SL[2]F \subseteq \HH $ such that
  $s\binom{y_1}{y_2} = \binom10$, and $a+xw \mapsto a+(sx)w$ is an
  automorphism of~$\OO$ mapping $n = yw$ to~$p_0w$.
\end{proof}

\begin{exam}
  The elements $p_0 \coloneqq \left(
    \begin{smallmatrix}
      1 & 0 \\
      0 & 0 
    \end{smallmatrix}\right)$ and %
  $n_0 \coloneqq \left(
    \begin{smallmatrix}
      0 & 1 \\
      0 & 0
    \end{smallmatrix}\right)$ %
  in $F^{2\times2}$ satisfy the assumptions in~\ref{pnOrbit}; they
  generate the subalgebra %
  $L \coloneqq Fp_0+Fn_0 =   \set{\left(
      \begin{smallmatrix}
        r & s \\
        0 & 0 
      \end{smallmatrix}\right)}{r,s\in F}$ of~$F^{2\times2}$. %
  
  This subalgebra~$L$ is not invariant under the standard involution;
  its image under~$\kappa$ is %
  $\gal{L} = \gal{Fp_0+Fn_0} = \set{\left(
      \begin{smallmatrix}
        0 & s \\
        0 & r 
      \end{smallmatrix}\right)}{r,s\in F}$.   %
  The algebras $L$ and $\gal{L}$ are not isomorphic.  %
\end{exam}

\begin{lemm}\label{mnOrbit}
  Let\/ $m,n$ be two non-trivial elements of\/~$1^\perp$ with
  $\N(m)=0=\N(n)$ and $mn=0$. Then $nm=0$, and $Fm+Fn = m\OO\cap n\OO$
  is a subalgebra of dimension~$2$. %
  These subalgebras form a single orbit under~$\Aut[F]\OO$, and for
  any such algebra~$A$, the stabilizer of\/~$A$ in~$\Aut[F]\OO$ acts
  two-transitively on the set of one-dimensional subspaces of\/~$A$.
\end{lemm}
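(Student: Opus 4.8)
The plan is to argue concretely after reducing to a normal form. First I would check the easy algebraic relations: since $m,n\in 1^\perp$ we have $\gal{m}=-m$, $\gal{n}=-n$, so $nm = \gal{\gal{m}\,\gal{n}} = \gal{(-m)(-n)} = \gal{mn} = 0$; and $(m|n) = (1|\gal{m}n) = -(1|mn) = 0$, so $m,n$ span a $2$-dimensional totally singular subspace of $1^\perp$ (the elements are not proportional, else $mn = 0$ would be automatic in a different way — actually I should note $m,n$ non-proportional is part of the hypothesis that they are two genuinely different elements; if $Fm=Fn$ then $mn=0$ says $m^2=0$, and $Fm+Fn = Fm$ is one-dimensional, a degenerate case I would mention or exclude). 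Using \ref{vanderBljSpringer}\ref{intersectMaxTotSing}(e), $mn=0$ forces $\dim(m\OO\cap\OO n)=3$ with $m\OO\cap\OO n = \set{my}{y\in n^\perp}$; but I actually want $m\OO\cap n\OO$. Here $mn=0$ together with $m,n\in 1^\perp$ gives $nm=0$, so also $n m = 0$, and by part (b) of \ref{vanderBljSpringer}\ref{intersectMaxTotSing}, since $(m|n)=0$ and $\dim(Fm+Fn)=2$, we get $\dim(m\OO\cap n\OO)=2$ and $m\OO\cap n\OO = m(\gal{n}\OO) = n(\gal{m}\OO)$. Since $m\in m\OO$ (as $1\in\OO$) and $m = n(\gal{m}\,z)$ for suitable $z$ — more directly, $m\in m\OO$ and $m = -m\gal{m}\cdot(\text{something})$; cleanest is: $n\in m\OO$ because $n = m\gal{m}n/\N(m)$ fails ($\N(m)=0$), so instead observe $m,n$ both lie in $m\OO\cap n\OO$: clearly $m=m\cdot 1\in m\OO$, and $n=n\cdot1\in n\OO$, and we must also check $m\in n\OO$ and $n\in m\OO$. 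For the latter: $m\gal{n}\cdot(\cdot)$... I would instead just verify $Fm+Fn\subseteq m\OO\cap n\OO$ using the description $m\OO = \ker\lambda_{\gal{m}}$ from \ref{vanderBljSpringer}\ref{kerLambdaA}: $\gal{m}m = -m^2 = -\N(m)=0$ wait $m^2 = -\gal{m}m = \N(m)$... since $\N(m)=0$, $m^2=0$, so $m\in\ker\lambda_{\gal{m}}=m\OO$; and $\gal{m}n = -mn = 0$ so $n\in\ker\lambda_{\gal{m}} = m\OO$; symmetrically $m,n\in n\OO$. Hence $Fm+Fn\subseteq m\OO\cap n\OO$, and by dimension count equality holds. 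Closure under multiplication: $m^2=n^2=0$, $mn=0=nm$, so multiplication on $Fm+Fn$ is identically zero — in particular it is a subalgebra (with trivial multiplication).

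Next I would establish the single orbit. By \ref{normAndTraceCharacterizeOrbits} all nontrivial elements of norm $0$ and trace $0$ in the fixed octonion algebra form one $\Aut[F]\OO$-orbit, so up to an automorphism we may assume $m = n_0 = \left(\begin{smallmatrix}0&1\\0&0\end{smallmatrix}\right)$ in the standard model $\OO = F^{2\times2}+F^{2\times2}w$, $w^2=1$. Then $n\in 1^\perp$, $\N(n)=0$, $n_0 n = 0 = n n_0$, and $n\notin F n_0$. The condition $n\in n_0\OO\cap\OO n_0$ (from the first part, with $m=n_0$) plus $n\in 1^\perp$ pins down $n$: by \ref{threedimPureSubalgebra}(b) or by direct computation, $n_0\OO\cap\OO n_0 = \set{\alpha n_0+(\beta n_0+\gamma\gal{p_0})w}{\alpha,\beta,\gamma\in F}$, and intersecting with $1^\perp$ (which kills the $F\cdot 1$ part — automatically satisfied here since there is no $1$-component) and imposing $\N(n)=0$: writing $n = \alpha n_0 + (\beta n_0+\gamma\gal{p_0})w$, one computes $\N(n) = -\N((\beta n_0+\gamma\gal{p_0})) = -(-\beta\gamma)\cdot(\text{?})$; in any case $\N(n) = \beta\gamma$ up to sign, so $\N(n)=0$ forces $\beta=0$ or $\gamma=0$. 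If $\gamma=0$ then $n\in Fn_0+Fn_0w$; if $\beta=0$ then $n\in Fn_0+F\gal{p_0}w$. I would handle these by exhibiting automorphisms of the form $\alpha_{s,t}\colon a+xw\mapsto sas^{-1}+(txs^{-1})w$ from \ref{stabilizerH} fixing $n_0$ (these require $s$ centralizing $n_0$ in $F^{2\times2}$, i.e. $s$ upper triangular unipotent up to scalar, and $\N(s)=\N(t)$) to move $n$ to a fixed representative, say $n_0w$. Concretely, for $n\in Fn_0+Fn_0w$, $n=\alpha n_0+\beta n_0w$ with $\beta\ne0$ (else $n\in Fn_0$), rescale to $n_0w + \alpha' n_0$, then use $t = \left(\begin{smallmatrix}1&0\\-\alpha'&1\end{smallmatrix}\right)$, $s=1$: $\alpha_{1,t}$ fixes $n_0$ and sends $n_0w\mapsto (tn_0)w = n_0 w$ and adjusts appropriately — I'd pick $t$ so that the $n_0$-component is absorbed. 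For the $\gamma\ne0$ case similarly. This shows every such $A$ is in the orbit of $Fn_0+Fn_0w$.

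Finally, the two-transitivity statement. It suffices to show the stabilizer of $A_0 := Fn_0 + Fn_0w$ in $\Aut[F]\OO$ acts $2$-transitively on the set of lines (one-dimensional subspaces) of $A_0$, equivalently that it induces all of $\GL{A_0}\cong\GL[2]F$ on $A_0$ (a transitive-on-ordered-bases action, which is more than enough). The automorphisms $\alpha_{s,t}$ with $s$ in the centralizer of $n_0$ in $F^{2\times2}$: taking $s = \left(\begin{smallmatrix}\lambda&\mu\\0&\lambda\end{smallmatrix}\right)$ (these centralize... no wait, they don't commute with $n_0$ unless... $\left(\begin{smallmatrix}\lambda&\mu\\0&\lambda\end{smallmatrix}\right)$ does commute with $n_0$) and $t$ with $\N(t)=\N(s)=\lambda^2$, the map sends $n_0 = sn_0 s^{-1}\cdot(\text{check})$ — actually $sn_0s^{-1}$: since $s$ commutes with $n_0$ and $\N(s)=\lambda^2\ne0$, $sn_0s^{-1}=n_0$, and $\alpha_{s,t}(n_0w) = (tn_0s^{-1})w$. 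Choosing $s=I$ (so $\N(s)=1$) and $t\in\SL[2]F$, we get $\alpha_{I,t}$ fixing $n_0$ and sending $n_0 w\mapsto (tn_0)w$; as $t$ ranges over $\SL[2]F$, $tn_0$ ranges over all rank-one matrices with second row zero and first row any nonzero vector — in particular $(tn_0)w$ is a general element of $Fp_0w + Fn_0w$ of norm $0$, not just in $Fn_0w$. Hmm, so this does not stay inside $A_0$. The right move: I want automorphisms preserving $A_0 = Fn_0+Fn_0w$. Scaling: $n_0\mapsto \mu n_0$, $n_0w\mapsto \nu n_0w$ can be arranged ($\alpha_{s,t}$ with $s$ scalar rescales, and one can also use the grading-type automorphism $a+xw\mapsto a + c^{-1}xw$ composed appropriately — need $\N$ preserved, so this specific map scales $\N(xw)$ by $c^{-2}$, not an isometry unless combined). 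I would look for the subgroup of $\Aut[F]\OO$ fixing the pair of maximal totally singular subspaces $n_0\OO$ and $\OO n_0$ (equivalently fixing $n_0$ up to scalar): this is essentially a parabolic-type subgroup, and its action on the $3$-dimensional two-sided annihilator $n_0\OO\cap\OO n_0 = Fn_0 + (F\gal{p_0}+Fn_0)w$ — I'd compute the induced action. The cleanest approach: show the pointwise setup reduces to an $\SL[2]F$ or $\GL[2]F$ acting naturally on a $2$-space, by exhibiting two explicit one-parameter families of automorphisms (one unipotent, one a torus) whose induced maps on the line set of $A_0$ generate a $2$-transitive group; a single unipotent moving one fixed line to all others, plus a torus fixing two lines and moving the rest, already gives $2$-transitivity.

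The main obstacle I anticipate is the last part: finding enough explicit automorphisms of $\OO$ that \emph{preserve} the specific $2$-dimensional subalgebra $A_0 = Fn_0+Fn_0w$ (not merely normalize the larger annihilator $n_0\OO\cap\OO n_0$) and induce a $2$-transitive group on its $1$-dimensional subspaces. The bookkeeping with the doubling formula and the constraints $\N(s)=\N(t)$ in \ref{stabilizerH}, plus possibly needing automorphisms not of that restricted form (those come from a fixed doubling decomposition $H+Hw$), is where care is needed; I would likely use the $\Aut[F]\OO$-conjugates of the "triality"/swap automorphisms or the full stabilizer of the flag $Fn_0\subset n_0\OO\cap\OO n_0$ rather than trying to force everything through one fixed quaternion doubling.
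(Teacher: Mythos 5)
Your first part (showing $nm=0$ and that $Fm+Fn=m\OO\cap n\OO$ is a $2$-dimensional subalgebra with trivial multiplication) is correct and essentially matches the paper's route, with somewhat more detail drawn from \ref{vanderBljSpringer}. The orbit argument, however, contains a computational error that leaves the generic case unhandled: every element of the two-sided annihilator $n_0\OO\cap\OO n_0=Fn_0+(Fn_0+F\gal{p_0})w$ already has norm~$0$, because this subspace is contained in the totally singular subspace $n_0\OO$; concretely, $\N\bigl(\alpha n_0+(\beta n_0+\gamma\gal{p_0})w\bigr)=\det(\alpha n_0)-\det\left(\begin{smallmatrix}0&\beta\\0&\gamma\end{smallmatrix}\right)=0$ for all $\alpha,\beta,\gamma$, and is not $\pm\beta\gamma$. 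So the condition $\N(n)=0$ does \emph{not} force $\beta=0$ or $\gamma=0$, and your two-case analysis misses every $n$ with $\beta\gamma\ne0$. The repair is the uniform argument you gesture at and which the paper actually uses: modulo $Fn_0$ one may take $n=yw$ with $y=\beta n_0+\gamma\gal{p_0}\ne0$; since $\SL[2]F$ acts transitively on nonzero vectors, there is $s\in\SL[2]F$ with $sy=n_0$, and $a+xw\mapsto a+(sx)w$ is an automorphism ($\alpha_{1,s}$ in the notation of \ref{stabilizerH}) fixing $n_0$ and sending $yw$ to $n_0w$. No case split is needed.

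The two-transitivity assertion is not actually proved in your proposal --- you explicitly defer it as ``the main obstacle'', offering only a plan involving parabolic subgroups or triality. The paper's resolution is elementary and hinges on choosing the representative $A=(Fp_0+Fn_0)w\subseteq\HH w$ rather than $Fn_0+Fn_0w$: for that $A$, the maps $a+xw\mapsto s^{-1}as+(xs)w$ with $s\in\SL[2]F$ are automorphisms of~$\OO$, they preserve~$A$ because $Fp_0+Fn_0$ is a right ideal of $F^{2\times2}$, and on $A\cong F^2$ (identified via first rows) they induce the natural action of $\SL[2]F$, which is two-transitive on one-dimensional subspaces. Your representative straddles $\HH$ and $\HH w$, which is exactly why its stabilizer is opaque in doubling coordinates; a better choice of base point within the (single) orbit dissolves the difficulty without any heavier machinery.
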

\begin{proof}
  We know from~\ref{maximalIsotropicSubalgebras} that~$m\OO$
  and~$n\OO$ are subalgebras, so $m\OO\cap n\OO$ is a subalgebra, as
  well. As $m,n$ lie in~$1^\perp$, we have $nm = \gal{mn} =0$. 

  Using~\ref{normAndTraceCharacterizeOrbits}, we may assume
  $n = n_0 \coloneqq \left(
    \begin{smallmatrix}
      0 & 1 \\
      0 & 0 
    \end{smallmatrix}\right) \in \HH $. %
  There are $b,y\in \HH $ such that $m=b+yw$, and $0 = mn_0 =
  bn_0-(y{n_0})w$ yields $bn_0=0$ and $yn_0=0$. On the other hand, we have
  $0 = n_0m = n_0b+(yn_0)w$, and obtain $n_0b=0$, as well. This yields $b\in
  Fn_0$ and $y=\left(
    \begin{smallmatrix}
      0 & y_1 \\
      0 & y_2
    \end{smallmatrix}\right)$ with $y_1,y_2\in F$. %
  Now $Fm+Fn_0 = F(m-b)+Fn_0$, there exists
  $s\in \SL[2]F \subseteq \HH $ such that $sy=n_0$, and
  $a+xw \mapsto a+(sx)w$ is an automorphism of~$\OO$ mapping
  $m-b = yw$ to~$n_0w$.

  For the last assertion, it now suffices to consider
  $A = \set{\left(
      \begin{smallmatrix}
        r & s \\
        0 & 0
      \end{smallmatrix}\right)w}{r,s\in F}$.
  The stabilizer of~$A$ in~$\Aut[F]\OO$ contains the maps
  $a+xw \mapsto s^{-1}as+(xs)w$, where $s\in\SL[2]F < \HH $. Clearly,
  these maps form a group that acts two-transitively on the set of
  one-dimensional subspaces of~$A$, as claimed.
\end{proof}

\begin{coro}\label{twoTrsPointRow}
  The group~$\Aut[F]\OO$ acts transitively on the set of all pairs
  $(Fm,Fn)$ of different subspaces spanned by non-trivial elements
  $m$, $n$ in~$\OO$ with $\N(m) = \N(n) = \tr(m) = \tr(n) = 0$ and
  $mn=0$. %
  \qed
\end{coro}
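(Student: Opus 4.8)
The plan is to deduce this directly from Lemma~\ref{mnOrbit}, whose two assertions (single-orbit and two-transitivity on lines) together already contain everything needed. First I would observe that the hypotheses of the corollary put us in the situation of~\ref{mnOrbit}: if $\tr(m)=\tr(n)=0$ then $\gal{m}=\tr(m)1-m=-m$ and likewise $\gal{n}=-n$, so $m,n\in 1^\perp$; together with $\N(m)=\N(n)=0$ and $mn=0$ these are exactly the hypotheses of~\ref{mnOrbit}. Hence $A\coloneqq Fm+Fn$ is a two-dimensional subalgebra of $\OO$ of the type described there, and $Fm$, $Fn$ are two \emph{distinct} one-dimensional subspaces of the two-dimensional space~$A$ (this uses the assumption $Fm\ne Fn$).

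Next, given two such pairs $(Fm_1,Fn_1)$ and $(Fm_2,Fn_2)$, form the associated two-dimensional subalgebras $A_i\coloneqq Fm_i+Fn_i$ for $i\in\{1,2\}$. By the single-orbit part of~\ref{mnOrbit}, there is $\alpha\in\Aut[F]\OO$ with $\alpha(A_1)=A_2$. Then $\alpha(Fm_1)$ and $\alpha(Fn_1)$ are two distinct one-dimensional subspaces of~$A_2$, as are $Fm_2$ and $Fn_2$. Applying the final assertion of~\ref{mnOrbit}, the stabilizer of~$A_2$ in $\Aut[F]\OO$ acts two-transitively on the set of one-dimensional subspaces of~$A_2$, so there is $\beta\in\Aut[F]\OO$ with $\beta(A_2)=A_2$, $\beta(\alpha(Fm_1))=Fm_2$, and $\beta(\alpha(Fn_1))=Fn_2$. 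The composition $\beta\alpha\in\Aut[F]\OO$ then maps the pair $(Fm_1,Fn_1)$ to $(Fm_2,Fn_2)$, which is the claimed transitivity.

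There is no real obstacle here; the corollary is a formal consequence of~\ref{mnOrbit}. The only points requiring a moment of care are the translation of the trace-zero hypothesis into $m,n\in 1^\perp$ (so that~\ref{mnOrbit} applies at all) and the remark that ``different subspaces'' genuinely yields a pair of distinct lines inside the two-dimensional algebra~$A$, which is what makes the two-transitivity statement usable.
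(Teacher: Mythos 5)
Your proposal is correct and is exactly the argument the paper intends: the corollary is stated with an immediate \qed because it is a formal consequence of the single-orbit and two-transitivity assertions of Lemma~\ref{mnOrbit}, which is precisely how you deduce it. Your two points of care (translating $\tr=0$ into membership in $1^\perp$, and noting that distinct subspaces give distinct lines in the two-dimensional algebra) are the right ones.
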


\begin{nthm}[Totally singular subalgebras]%
  \label{allTotallySingularSubalgebras}%
  Let\/~$A$ be a subalgebra of\/~$\OO$, and assume $1\notin
  A$. Then~$A$ is totally singular, and\/ $\dim{A}\le4$.  In order to
  give representatives for the orbits of subalgebras, we write
  $\OO = \HH +\HH w$ as the double of the split quaternion algebra
  $\HH \coloneqq F^{2\times2}$ with $w$ such that $w^2=1$, and use
  $p_0 \coloneqq \left(
        \begin{smallmatrix}
          1 & 0 \\
          0 & 0
        \end{smallmatrix}\right)$ %
      and $n_0 \coloneqq \left(
        \begin{smallmatrix}
          0 & 1 \\
          0 & 0
        \end{smallmatrix}\right)$. 
  \begin{enumerate}
  \item If\/ $\dim{A}=1$ then $A=Fa$ with $a\in\OO$ such that
    $\N(a)=0$. These subalgebras form two orbits under~$\Aut[F]\OO$,
    represented, respectively, by~$Fp_0$, and by~$Fn_0$. %
    Note that\/~$F$ and\/~$Fp_0$ are isomorphic as algebras, %
    under an isomorphism that maps $1\in F$ to
    $p_0\in\OO\setminus\{1\}$.  %
  \item Totally singular subalgebras of dimension~$2$ form three
    orbits under~$\Aut[F]\OO$:
    \begin{enumerate}
    \item Algebras with a left identity for the multiplication, such
      as $Fp_0+Fn_0$ (see~\ref{pnOrbit}).
    \item Algebras with a right identity for the multiplication,
      obtained by applying some anti-automorphism (such as~$\kappa$)
      to algebras with left identity (see~\ref{pnOrbit}).
    \item Algebras with trivial multiplication, such as $(Fp_0+Fn_0)w$
      (see~\ref{mnOrbit}).
    \end{enumerate}
    Each one of the algebras in the latter orbit is contained
    in~$1^\perp$, while none of those in the first two of these orbits
    is contained in~$1^\perp$.
  \item Totally singular subalgebras of dimension~$3$ form two
    orbits under~$\Aut[F]\OO$, namely:
    \begin{enumerate}
    \item Heisenberg algebras $Fx+Fy+Fn$ with $xy=n=-yx$, obtained as
      $n\OO\cap \OO n$ for $n\in\OO$ such that $n^2=0\ne n$. %
      These subalgebras are contained in~$1^\perp$.
    \item\label{mOOcapOOn}%
      Algebras of the form $m\OO \cap \OO n$ where $m,n\in\OO$ are
      linearly independent, with $m^2 = 0 = n^2=0 = mn = nm$.  %
      None of these subalgebras is contained in~$1^\perp$.
    \end{enumerate}
  \item\label{totSing4orbits}%
    Totally singular subalgebras of dimension~$4$ form two
    orbits under~$\Aut[F]\OO$, namely:
    \begin{enumerate}
    \item $n\OO$  for $n\in\OO$ such that $n^2=0\ne n$.
    \item $\OO n$  for $n\in\OO$ such that $n^2=0\ne n$.
    \end{enumerate}
  \end{enumerate}
  \goodbreak\noindent %
  In particular, two totally singular subalgebras of\/~$\OO$ are
  isomorphic if, and only if, they are in the same orbit
  under~$\Aut[F]\OO$. 

  The totally singular subalgebras of dimension at most~$3$ are
  associative. The totally singular subalgebras of dimension~$4$ are
  not associative.
\end{nthm}
\begin{proof}
  By~\ref{1inSub}, our assumption $1\notin A$ implies $\N(A)=\{0\}$,
  and~$A$ is totally singular. %
  As the quadratic form~$\N$ is not degenerate, we have
  $\dim{A} \le \frac12\dim\OO = 4$.

  If $\dim{A}=1$ then $A=Fa$ with $a\in\OO$ with $\N(a)=0$. If
  $\tr(a)\ne0$ then $\tr(a)^{-1}a$ has trace~$1$, and lies in the
  orbit of~$p_0$ under~$\Aut[F]\OO$
  by~\ref{normAndTraceCharacterizeOrbits}. %
  If $\tr(a)=0$ then $a$ lies in the orbit of~$n_0$
  under~$\Aut[F]\OO$.

  Now assume that $A=Fx+Fy$ has dimension~$2$. Then~$A$ is
  associative, and there are $r,s\in F$ such that $xy=rx+sy$. %
  Without loss, we may assume $y\in 1^\perp$. Then $y^2=0$, and
  $0 = xy^2 = (xy)y = rxy + sy^2 = rxy$ yields $xy\in Fy$. 
  
  If $A \subseteq 1^\perp$, we also have  $x^2=0$, and $xy=0$
  follows. In this case, the algebra $A$ is in the orbit of $(Fp+Fn)w
  = Fpw+Fnw$  under~$\Aut[F]\OO$, see~\ref{mnOrbit}. 
  
  If $A\not\subseteq 1^\perp$, we may assume $\tr(x)=1$. Then $x^2=x$
  yields $sy = xy = x(xy) = s^2y$, and $s\in\{0,1\}$ follows. %
  From $0 = (x|y) = \gal{x}y+\gal{y}x = (1-x)y-yx$ we then infer
  $yx = y-xy = y-sy$.  Thus $A$ is in the orbit of $Fp+Fn$ if $s=1$,
  and in the orbit of $F\gal{p}+Fn$ if $s=0$, see~\ref{mnOrbit}.

  Now assume $\dim{A}=3$. Then there are $x,y\in\OO$ with
  $\N(x)=0=\N(y)$ and $xy=0$ such that $A= x\OO\cap\OO y$. %
  From~\ref{subalgebrasInpOO} we infer $x,y\in 1^\perp$. %

  If $Fx=Fy$ then $A$ is in the orbit of $n_0\OO\cap\OO n_0$
  by~\ref{normAndTraceCharacterizeOrbits}, and
  $n_0\OO\cap\OO n_0 = Fn_0+({F\gal{p_0}+Fn_0})w \subseteq 1^\perp$. %
  If $Fx\ne Fy$ then~\ref{twoTrsPointRow} yields that $A$ is in the
  orbit of $(p_0w)\OO\cap\OO(n_0w) = Fp_0+(Fp_0+Fn_0)w$,
  cp.~\ref{threedimPureSubalgebra}\ref{pownow}. That subalgebra
  contains $p_0\notin 1^\perp$. %
  We know from~\ref{threedimPureSubalgebra} that these subalgebras are
  associative. %

  Finally, assume $\dim{A}=4$. Then either $A=x\OO$ or $A=\OO x$
  holds for some $x\in\OO$ with $\N(x)=0$, and~\ref{subalgebrasInpOO}
  yields $\tr(x)=0$. 
  Thus either~$A$ is in the orbit of~$n_0\OO$, or~$A$ is in the orbit
  of~$\OO n_0$, see~\ref{normAndTraceCharacterizeOrbits}. %
  We know from~\ref{fourdimNotAssoc} that these subalgebras are not
  associative. %
\end{proof}

\section{Subalgebras that are not totally singular}

\begin{lemm}\label{radicalQuotient}
  Assume that\/~$A$ is a subalgebra of\/~$\OO$. Consider the
  radicals\/ $R\coloneqq A^\perp\cap A$ of the restriction of the
  polar form, and\/ $Q \coloneqq \set{x\in R}{\N(x)=0}$
  of the restriction~$\N|_A$ of the quadratic form, respectively. %
  \begin{enumerate}
  \item\label{Rideal}%
    We have $AR \subseteq R$, and $RA \subseteq R$. 
  \item In any case, both~$Q$ and\/~$R$ are subalgebras of\/~$\OO$.
  \item If the restriction of the polar form to~$A$ is not trivial
    then~$R$ is totally singular. So $R = Q$, and\/ $A/R$ is a
    composition algebra.
  \item If the restriction of the polar form to~$A$ is trivial then
    $A = R$. So either~$A=Q$ is totally singular, or $R\ne Q$. In the
    latter case, we have $\Char{F}=2$, the algebra~$A$ is commutative,
    and contains\/~$F$.
  \end{enumerate}
\end{lemm}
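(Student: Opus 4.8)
The plan is to treat the four items in order, with~(a) feeding into~(b), and with~(c) and~(d) being the two cases of a dichotomy on the restricted polar form. For~(a) I would read off $AR\subseteq R$ and $RA\subseteq R$ from the adjointness relations $(cx|y)=(x|\gal c y)$ and $(xc|y)=(x|y\gal c)$ of~\ref{propertiesCA}\ref{adjoint}: given $a\in A$ and $r\in R$, the product $ar$ lies in~$A$ because $A$ is a subalgebra, and for every $b\in A$ one has $(ar|b)=(r|\gal a b)$ with $\gal a b=\tr(a)b-ab\in A$ (since $A$ is an $F$-subspace closed under multiplication), so this vanishes because $r\in A^\perp$; hence $ar\in A^\perp\cap A=R$, and $RA\subseteq R$ is the mirror computation using the second identity. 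For~(b), $R$ is an $F$-subspace as an intersection of two subspaces, with $RR\subseteq RA\subseteq R$ by~(a), hence a subalgebra; and $Q$ is an $F$-subspace because for $x,y\in Q\subseteq R$ one has $x\in A^\perp$ and $y\in A$, so $(x|y)=0$ and $\N(x+y)=\N(x)+\N(y)+(x|y)=0$, while $QQ\subseteq RA\subseteq R$ together with $\N(xy)=\N(x)\N(y)=0$ gives $QQ\subseteq Q$.

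For~(c) I would assume the restricted polar form nontrivial. First, $A$ is not totally singular, for $\N|_A\equiv 0$ would force $(x|y)=\N(x+y)-\N(x)-\N(y)=0$ throughout~$A$; hence $A$ contains an element of nonzero norm, which is invertible by~\ref{propertiesCA}\ref{inverse}, so $1\in A$ by~\ref{1inSub}. Next, $R$ is totally singular: otherwise $R$ contains an element of nonzero norm, hence an invertible one, so $1\in R$ by~\ref{1inSub} applied to the subalgebra~$R$; then $1\in A^\perp$ gives $A\subseteq 1^\perp$, and~\ref{pureAlgebrasIsotropic} would make the restricted polar form trivial, a contradiction. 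Thus $R=Q$. Finally, since $\N(x+r)=\N(x)$ and $(x+r|y)=(x|y)$ for all $x,y\in A$ and $r\in R$ (using $\N(r)=0$ and $r\in A^\perp$), the norm and its polar form descend to $A/R$; the induced norm is multiplicative, the induced polar form has radical $R/R=\{0\}$ because $R$ is by definition the radical of the restricted polar form, and $1\notin R$ (else that polar form would be trivial, as just seen), so $1+R$ is a neutral element. Hence $A/R$ is a composition algebra.

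For~(d) I would note that triviality of the restricted polar form means $A\subseteq A^\perp$, so $A=A^\perp\cap A=R$ and $Q\subseteq R=A$. Either $Q=A$, and then $\N|_A\equiv 0$, so $A=Q$ is totally singular; or $Q\ne A$, so $R\ne Q$ and some $x\in A$ has $\N(x)\ne 0$. That $x$ is invertible, so $1\in A$ by~\ref{1inSub} and $F=F1\subseteq A$. Using $(x|1)=x+\gal x=\tr(x)$ (from~\ref{propertiesCA}\ref{recoverNorm}) and the triviality of the restricted polar form, $\tr(x)=0$ for every $x\in A$; in particular $0=\tr(1)=2$, so $\Char F=2$, and then $\gal x=\tr(x)-x=x$ for all $x\in A$. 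Since $\kappa$ is an anti-automorphism fixing~$A$ pointwise, $xy=\gal{xy}=\gal y\,\gal x=yx$ for $x,y\in A$, so $A$ is commutative.

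I do not expect a genuine obstacle; the step most deserving of care is the claim in~(c) that $R$ is totally singular, which I would establish indirectly via~\ref{1inSub} and~\ref{pureAlgebrasIsotropic} rather than by a direct norm computation, and I would make sure not to skip the verification that $1\notin R$ before concluding that $A/R$ is a composition algebra.
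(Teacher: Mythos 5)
Your proposal is correct, and its overall skeleton matches the paper's: adjointness for~(a), multiplicativity of the norm plus vanishing of the polar form on the radical for~(b), the dichotomy on the restricted polar form for~(c) and~(d), and the same trace computation forcing $\Char{F}=2$ and commutativity in~(d). Two local steps are handled differently, both legitimately. In~(a) the paper first splits into cases (if $A$ has no invertible element the polar form is trivial and $R=A$; otherwise $1\in A$, so $A$ is closed under~$\kappa$ and $\gal{a}\in A$), whereas you avoid the case distinction entirely by observing that $\gal{a}b=\tr(a)b-ab$ lies in~$A$ whether or not $\gal{a}$ does --- a small but genuine streamlining. In~(c) the paper shows $R$ is totally singular by the direct computation $0=(a|b\gal{x}x)=(a|b)\N(x)$ with $(a|b)\ne0$, while you argue indirectly: an invertible element of~$R$ would force $1\in R\subseteq A^\perp$, hence $A\subseteq 1^\perp$, contradicting nontriviality of the restricted polar form via~\ref{pureAlgebrasIsotropic} and~\ref{1inSub}. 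The direct computation is self-contained; your route leans on two earlier lemmas but avoids any product manipulation and also delivers, as a free byproduct, the fact $1\notin R$ that you then correctly use to exhibit the neutral element of~$A/R$ --- a point the paper handles by citing~\ref{1inSub} for $1\in A$. No gaps.
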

\begin{proof}
  If~$A$ contains no invertible element, then the restriction of the
  norm form to~$A$ is zero, and so is the restriction of the polar
  form. Then $R=A$ and assertion~\ref{Rideal} is trivial.

  So assume that~$A$ contains invertible elements; then $1\in A$
  and~$A$ is closed under~$\kappa$ by~\ref{closedUnderKappa}. For
  $x\in R$ and $a,b\in A$, we compute $(ax|b) = (x|\gal{a}b) = 0$. So
  $ax\in R$, and $AR \subseteq R$ follows. The argument for
  $RA\subseteq R$ is completely symmetric to the one just given.  So
  assertion~\ref{Rideal} is established, and we know that $R$ is as
  subalgebra. %
  
  In any case, the set~$Q$ is closed under multiplication. %
  If $\Char{F}\ne2$ then $Q=R$. %
  If $\Char{F}=2$ then the restriction $\N|_R\colon R\to F$ is a
  semi-linear map (with respect to the Frobenius endomorphism),
  and~$Q$ is the kernel of that map. So~$Q$ is a vector subspace, and
  then a subalgebra.

  Now assume that the restriction of the polar form to~$A$ is not
  trivial. %
  We show first that~$R$ consists of elements of norm~$0$; i.e.,
  $R=Q$. %
  By our assumption, there exist $a,b\in A$ such that $(a|b)\ne0$. %
  For $x\in R$ we use $\gal{x},b\gal{x},b\gal{x}x\in R$ and compute
  $0 = (a|b\gal{x}x) = (a|b)x\gal{x}$.  %
  This gives $\N(x)=0$, as claimed.
  
  Using assertion~\ref{Rideal}, it is straightforward to check (by the
  usual arguments known for associative algebras) that addition and
  multiplication on $A/R$ are well-defined by $(a+R)(b+R) = (a+b)+R$,
  and $(a+R)(b+R) = (ab)+R$. The induced norm $M(a+R) \coloneqq \N(a)$
  is well-defined, and multiplicative on~$A/R$. The polar form of~$M$
  is non-degenerate, and $1\in A$ follows from the fact that the
  subalgebra~$A$ is not totally singular (see~\ref{1inSub}). %
  So indeed $A/R$ is
  a composition algebra.

  Finally, assume that the restriction of the polar form to~$A$ is
  trivial, i.e., $R=A$.  %
  If $Q \ne R$ then~$R$ contains an invertible element~$b$. Then
  $0 = (b|b) = 2\N(b)$ yields $\Char{F} = 2$, and
  $b^2 = \N(b) \in (F\cap R) \smallsetminus\{0\}$ shows
  $F\subseteq R$.  For $x,y \in R$, we now have
  $0 = (x|y) = \gal{x}y + \gal{y}x = -xy-yx$, and use $\Char{F}=2$ to
  infer that~$R$ is commutative.
\end{proof}

\begin{nthm}[Subalgebras of dimension greater than four]%
  \label{dimGreaterFour}%
  Let\/ $A$ be a subalgebra of\/~$\OO$. %
  If\/ $\dim{A}>4$ then either $A=\OO$, or~$\OO$ is split and one of
  the following cases occurs:
  \begin{enumerate}
  \item\label{dimFive}%
    $\dim{A}=5$, and there exists a nilpotent element\/~$n$ such that
    $A=n\OO+\OO n$. These subalgebras form a single orbit
    under~$\Aut[F]\OO$. 
  \item\label{dimSix}%
    $\dim{A}=6$, and there exist two linearly independent nilpotent
    elements\/~$m,n$ with $mn=0$ such that $A=\{m,n\}^\perp$.  %
    Then $A = m\OO+n\OO = \OO m+\OO n$.  These subalgebras form a
    single orbit under~$\Aut[F]\OO$.
  \end{enumerate}
\end{nthm}
\begin{proof}
  We know that $1\in A$ because $\dim{A}>4$ implies that~$A$ is not
  totally singular.  From~\ref{pureAlgebrasIsotropic} we know
  $A\not\subseteq 1^\perp$. %
  We show first that there is a two-dimensional subalgebra
  $B\subseteq A$ such that the restriction of the polar form to~$B$ is
  not degenerate; then~$B$ is a composition algebra. %

  If $\Char{F}=2$ we just pick $b\in A$ such that $b\notin
  1^\perp$. Then $b\notin F\subseteq 1^\perp$, and
  $B\coloneqq F+Fb \subseteq A$ is a two-dimensional composition
  algebra, as required.

  If $\Char{F}\ne2$ we first note that $1^\perp\cap A$ is not totally
  singular. %
  In fact, that intersection has at least dimension~$4$. The
  restriction of the polar form to~$1^\perp$ is not degenerate (here
  we use $\Char{F}\ne2$), and does not contain any totally isotropic
  subspace of dimension greater than~$3$. %
  So there exists $a\in 1^\perp\cap A$ such that $\N(a)\ne0$, and
  $B\coloneqq F+Fa \subseteq A$ is a composition algebra, as
  required. %
  
  Now consider $B^\perp\cap A$. If there exists an invertible element
  $u\in B^\perp\cap A$ then $H \coloneqq B+Bu$ is a quaternion
  subalgebra in~$A$. %
  If there is still an invertible element $v\in H^\perp\cap A$ then
  $\OO = H+Hv$ coincides with~$A$. %
  So we may assume that $H^\perp\cap A$ consists of nilpotent
  elements. %
  Then the polar form vanishes on that intersection, and we find that
  $H^\perp \cap A$ equals the radical $R \coloneqq A^\perp\cap A$
  of~$A$. %
  Choose $w\in H^\perp$ with $\N(w)=-1$. Then $\OO = H+Hw$ is obtained
  by doubling, and $R\subseteq Hw$. %
  For $a,x\in H$, we find $a(xw) = (xa)w$, and~$R$ is a non-trivial
  module over the opposite algebra of $H \cong F^{2\times2}$. This
  yields $\dim{R} \ge2$, and any two-dimensional subspace
  $L\subseteq R$ gives $A \le R^\perp \le L^\perp$. So $A= L^\perp$
  and $L=R$ hold for dimension reasons. %
  The multiplication formula~\ref{propertiesCA}.\ref{doubling} shows
  $H^\perp H^\perp \subseteq H$, and~\ref{radicalQuotient}\ref{Rideal}
  yields $RR \subseteq R\cap H \subseteq H^\perp\cap H = \{0\}$. %
  So the multiplication in~$R$ is trivial, and~$R$ is in the orbit of
  $(Fp_0+Fn_0)w$, see~\ref{mnOrbit}
  and~\ref{allTotallySingularSubalgebras}. We obtain that~$A$ is in
  the orbit of $\left((Fp_0+Fn_0)w\right)^\perp = H+(Fp_0+Fn_0)w$,
  which is a subalgebra by~\ref{rightIdealConstruction}
  (see~\ref{Exasleftidealconstruction}).

  It remains to consider the case where $B^\perp\cap A$ consists of
  nilpotent elements. Then the norm vanishes on that subspace, and
  $Q \coloneqq B^\perp\cap A$ is the radical of the quadratic
  form~$\N|_A$.  As~$Q$ is a totally singular subspace of~$1^\perp$,
  we have $\dim{Q}\le3$ and then $\dim{Q}=3$ and $\dim{A}=5$.  The
  subspace~$Q$ forms a totally singular subalgebra of dimension~$3$,
  and is contained
  in~$1^\perp$. From~\ref{allTotallySingularSubalgebras} we know
  that~$Q$ is in the orbit of $n_0\OO\cap\OO n_0$, and $A = Q^\perp$
  is in the orbit of $(n_0\OO\cap\OO n_0)^\perp = n_0\OO+\OO n_0$.
\end{proof}

\begin{lemm}\label{QhasDim2}
  Let\/~$A$ be a $3$-dimensional subalgebra with $1\in A$ and
  $\dim{R}\ge2$. Then $\dim{Q}\ge2$. 
\end{lemm}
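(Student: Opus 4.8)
The plan is to split the argument on the characteristic of~$F$; the case $\Char{F}=2$ is the substantial one, and there the point is to pin down the structure of~$A$ as a commutative $F$-algebra.

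First I would dispose of $\Char{F}\ne2$: for each $x\in R=A^\perp\cap A$ one has $2\N(x)=(x|x)=0$, hence $\N(x)=0$, so $Q=R$ and $\dim{Q}=\dim{R}\ge2$. So assume $\Char{F}=2$. The polar form of~$\N$ is then alternating (since $(x|x)=\N(2x)-2\N(x)=2\N(x)=0$), and so is its restriction to the $3$-dimensional space~$A$; as this restriction has radical~$R$ with $\dim{R}\ge2$, it induces a \emph{non-degenerate} alternating form on the quotient $A/R$ of dimension $\le1$, which forces $A/R=0$. Hence the polar form vanishes identically on~$A$, i.e.\ $R=A$. By the last part of~\ref{radicalQuotient}, either $Q=A$ (and then $\dim{Q}=3\ge2$, done), or~$A$ is commutative and contains~$F$; from here on I proceed under the latter assumption. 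Since $1\in A=R\subseteq A^\perp$, we get $\tr(a)=(1|a)=0$ for every $a\in A$, so the quadratic relation $a^2-\tr(a)a+\N(a)1=0$ (see~\ref{propertiesCA}) collapses to
\[
  a^2=\N(a)1 \qquad\text{for all }a\in A.
\]

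Next I would show that $Q$ is a maximal ideal of~$A$ with residue field purely inseparable over~$F$. Multiplicativity of the norm gives $\N(ax)=\N(a)\N(x)$, hence $AQ\subseteq Q$ and $QA\subseteq Q$, so $Q$ is a two-sided ideal of~$A$. If $\bar a\ne0$ in $k\coloneqq A/Q$ then $\N(a)\ne0$, so $a$ is invertible in~$A$ with inverse $\N(a)^{-1}\gal{a}\in A$ (using~\ref{propertiesCA}\ref{inverse} and, for $\gal{a}\in A$, \ref{closedUnderKappa}); thus $\bar a$ is invertible in~$k$. Since~$A$ is associative (for a basis $1,a,b$ of~$A$, the subalgebra~$B$ generated by $\{a,b\}$ is associative by Artin's Theorem~\ref{propertiesCA}\ref{artin}, and $A=F+B$), the finite-dimensional commutative algebra~$k$ without zero divisors is a field. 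The structure map $F\to k$ is injective (as $\N(\lambda1)=\lambda^2$), and $a^2=\N(a)1$ shows $k^2\subseteq F$; therefore $k/F$ is purely inseparable of exponent at most~$1$, so $[k:F]$ is a power of~$2$. As $[k:F]\le3$, this leaves $[k:F]\in\{1,2\}$, equivalently $\dim{Q}=3-[k:F]\in\{1,2\}$.

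The one nontrivial point — and the step I expect to be the main obstacle — is to exclude $[k:F]=2$; everything before is soft. Suppose $[k:F]=2$, so $Q=F\epsilon$ is one-dimensional with $\epsilon^2=\N(\epsilon)1=0$. Choose $\theta\in A$ whose image generates~$k$ over~$F$; then $\theta^2=d1$ for some $d\in F$, and $d\notin F^2$ (otherwise $(\bar\theta+e)^2=\bar\theta^2+e^2=0$ for $e^2=d$ would force $\bar\theta=e\in F$). Since~$Q$ is an ideal, $\theta\epsilon=c\epsilon$ for some $c\in F$, whence
\[
  d\epsilon=(d1)\epsilon=\theta^2\epsilon=\theta(\theta\epsilon)=c^2\epsilon,
\]
and $\epsilon\ne0$ forces $d=c^2\in F^2$, a contradiction. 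Hence $[k:F]=1$, so $\dim{Q}=2$. Collecting the cases, $\dim{Q}\ge2$ whenever $\dim{R}\ge2$.
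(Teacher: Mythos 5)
Your proof is correct, but it reaches the conclusion by a genuinely different route from the paper's. The paper's key step is the observation that a $3$-dimensional unital subalgebra~$A$ cannot contain a $2$-dimensional subspace~$V$ on which $\N$ is anisotropic: for $v\in V\setminus\{0\}$ the translate $v^{-1}V$ is a quadratic extension field of~$F$ inside~$A$, over which $A$ would be a vector space, forcing $\dim_FA$ to be even. Assuming $\dim Q\le1$, the paper then argues that $A=R$ (so every singular element of~$A$ lies in~$Q$) and that a vector space complement of~$Q$ in~$A$ is a forbidden anisotropic plane of dimension at least~$2$. You obtain $A=R$ more directly, via the parity obstruction for non-degenerate alternating forms on $A/R$, and then replace the anisotropic-plane argument by a ring-theoretic analysis of $k=A/Q$: it is a purely inseparable field extension of~$F$ of exponent at most~$1$, so $[k:F]\in\{1,2\}$, and the residual case $[k:F]=2$ (i.e.\ $\dim Q=1$) is killed by the computation $d\epsilon=\theta^2\epsilon=\theta(\theta\epsilon)=c^2\epsilon$ on the one-dimensional ideal $Q=F\epsilon$, which would force $d=\N(\theta)\in F^2$. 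Both arguments are sound. The paper's single lemma disposes of $\dim Q\in\{0,1\}$ uniformly and needs no explicit reduction to characteristic two; your approach requires the extra computation precisely because $3-1=2$ \emph{is} a power of~$2$, so the degree count alone cannot exclude $\dim Q=1$. It is worth noting that your quotient-field analysis is exactly the device the paper itself deploys for $4$-dimensional subalgebras in the proof of~\ref{unaryDimAtMostFour} (there $4-1=3$ is not a power of~$2$, so the degree argument suffices), so your proof brings the two dimensions under a common method at the cost of one additional ad hoc step.
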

\begin{proof}
  We note first of all that~$A$ does not contain any $2$-dimensional
  vector subspace~$V$ such that the restriction~$\N|_V$ of the norm
  form is non-singular: for any $v\in V\setminus\{0\}$, the subspace
  $v^{-1}V$ would be a subalgebra without zero divisors (i.e., a
  quadratic extension field of~$F$), and the dimension of~$A$ would be
  even. 
  
  If $\dim{Q}<2$ then $Q\ne R$, and $\Char{F}=2$ follows. Consider
  $y\in R\setminus Q$. %
  If $y\in R$ then $A\subseteq 1^\perp$, and $A=R$ holds
  by~\ref{pureAlgebrasIsotropic}. If $y\notin F$ then $\Char{F}=2$
  yields $1\perp1$, and we obtain $A=R$, again. %
  Now each singular element of~$A$ lies in~$Q$. We pick a vector space
  complement~$V$ for~$Q$ in~$A$ and reach the contradiction
  that~$N|_V$ is non-singular.
\end{proof}

After~\ref{allTotallySingularSubalgebras} and~\ref{dimGreaterFour}, it
remains to treat subalgebras that are not totally
singular (so they contain~$1$) and have dimension at most~$4$.

\begin{nthm}[Unary subalgebras of dimension at most four]%
  \label{unaryDimAtMostFour}%
  Let\/ $A$ be a subalgebra of an octonion algebra~$\OO$, with
  $1\in A$.  If\/~$\OO$ is split, we write $\OO = \HH+\HH w$, with
  $\HH = F^{2\times2}$ and\/~$w^2=1$.
  \begin{enumerate}
  \item\label{dim1}%
    If\/ $\dim{A} = 1$ then $A = F$. 
  \item\label{dim2}%
    If\/ $\dim{A} = 2$ then we have one of the following cases:
    \begin{enumerate}
    \item $A$ is a separable quadratic extension field over~$F$.
    \item $A$ is an inseparable quadratic extension field over~$F$
      (and\/ $\Char{F}=2$).
    \item\label{FxF}%
      $A \cong F[X]/(X^2-X) \cong F\times F$ is a split
      composition algebra.
    \item $A \cong F[X]/(X^2)$ is a local algebra.
    \end{enumerate}
  \item\label{dim3}%
    If\/ $\dim{A} = 3$ then the radical $R\coloneqq A^\perp\cap A$
    is not trivial, and one of the following occurs.
    \begin{enumerate}
    \item\label{triangularMatrices}%
      $\dim{R}=1$, and\/ $A$ is in the $\Aut[F]\OO$-orbit of the
      subalgebra $F+Fp_0+Fn_0$ of upper triangular matrices in~$\HH $
      (see~\ref{pnOrbit}).
    \item $\dim{R}\ge2$, and\/~$A$ is in the orbit of the subalgebra
      $F+(Fp_0+Fn_0)w$ (see~\ref{mnOrbit}).
    \end{enumerate}
  \item\label{dim4}%
    If\/ $\dim{A} = 4$ then one of the following occurs:
    \begin{enumerate}
    \item $A$ is a quaternion algebra (split or not).
    \item\label{QtwoDim}%
      $A$ is in the orbit of $B+ (Fp_0+Fn_0)w$, where %
      $B \subseteq \HH$ %
      is either a two-dimensional composition algebra, or an
      inseparable quadratic field extension. %
    \item\label{F+Heisenberg}%
      The radical\/
      $Q \coloneqq \set{x\in A^\perp \cap A}{\N(x)=0}$ of the
      restriction~$\N|_A$ of the quadratic form has dimension~$3$. %
      Then~$Q$ is isomorphic to the Heisenberg algebra, and\/ $A$ is
      in the orbit of\/ $F+(n_0\OO\cap\OO n_0)$. %
    \item\label{totallyInseparable}%
      $A = A^\perp$ is a totally inseparable field extension of
      degree~$4$ and exponent~$1$ (and\/ $\Char{F}=2$). %
      \\ %
      Of course, this case can only occur if\/ $\Char{F}=2$.  Two such
      subfields are in the same orbit under $\Aut[F]\OO$ if, and only
      if, they are isomorphic as $F$-algebras,
      see~\upshape{\cite[5.7]{MR2746044}}.
    \end{enumerate}
  \end{enumerate}
  In each of these cases, subalgebras are in the same orbit under
  $\Aut[F]\OO$ precisely if they are isomorphic as $F$-algebras %
  (and contain~$1$, as assumed here).

  Each subalgebra~$A$ with $1\in A$ and $\dim{A}\le4$ is associative.
\end{nthm}
\begin{proof}
  If $\dim{A}=2$, pick $a\in A\smallsetminus F$. Then $1,a$ is a basis
  for~$A$, and the minimal polynomial of~$a$ is
  $m_a(X) = X^2-\tr(a)X+\N(a)$. The four cases in assertion~\ref{dim2}
  now belong to the cases where $m_a(X)$ is irreducible and separable
  (viz., $\Char{F}\ne2$ or $\tr(a)\ne0$), or $m_a(X)$ is irreducible
  and inseparable (viz., $\Char{F}=2$ and $\tr(a)=0$), or $m_a(X)$ is
  reducible with two different roots in~$F$, or $m_a(X)$ has a double
  root in~$F$, respectively. %
  Two such algebras $A$ and~$A'$ are in the same orbit under
  $\Aut[F]\OO$ if, and only if, there are elements $a\in A\smallsetminus F$
  and $a'\in A'\smallsetminus F$ such that~$a$ and~$a'$ have the same norm
  and trace (see~\ref{normAndTraceCharacterizeOrbits}). In that case,
  the algebras~$A$ and~$A'$ are both isomorphic to $F[X]/(m_a(X))$.

  If $\dim{A}=3$ then~$A$ is not a composition algebra. Therefore, the
  restriction of the polar form is degenerate, and the radical
  $R \coloneqq A^\perp\cap A$ is not
  trivial. From~\ref{radicalQuotient}\ref{Rideal} we know
  $AR = R = RA$.

  Assume that $\dim{R}=1$, and choose a vector space complement $B$
  for~$R$ in~$A$ such that $1\in B$. Then~$B$ is a subalgebra, and the
  restriction of the polar form to~$B$ is not degenerate.  So~$B$ is a
  composition algebra, and~$R$ is a $B$-module. As $\dim{R}=1$, the
  annihilator $\set{a\in B}{aR=\{0\}}$ is not trivial. Therefore, the
  algebra~$B$ is not a field but a split composition algebra, and~$R$
  does not contain invertible elements (see
  also~\ref{radicalQuotient}). Then $B\cong F\times F$, and each
  non-trivial element of $1^\perp\cap B$ is invertible. %
  Choose $a\ne0$ in the annihilator of~$R$ in~$B$. Then
  $\N(a)=0\ne\tr(a)$, and $M \coloneqq Fa+R$ is a totally singular
  subalgebra containing an
  idempotent. From~\ref{allTotallySingularSubalgebras} we now infer
  that~$M$ is in the orbit of either $Fp_0+Fn_0$ or
  $\kappa(Fp_0+Fn_0)$. In any case, the algebra $A=F+M$ is in the
  orbit of the algebra $F+Fp_0+Fn_0 = F+\kappa(Fp_0+Fn_0)$ of upper
  triangular matrices, as claimed.
  
  Now assume $\dim{R}\ge2$ (and still $\dim{A}=3$). %
  Then the radical~$Q$ of~$\N|_A$ has dimension~$2$ by~\ref{QhasDim2},
  and forms a totally singular subalgebra, see~\ref{radicalQuotient}.
  From~\ref{allTotallySingularSubalgebras} we conclude that~$Q$ is in
  the orbit of $(Fp_0+Fn_0)w$. So~$A = F+Q$ is in the orbit of
  $F+(Fp_0+Fn_0)w$, as claimed.

  Finally, consider the case where
  $\dim{A}=4$. From~\ref{radicalQuotient} we know that $A/R$ is a
  composition algebra if $R\ne A$. So $\dim{R} = 4 -\dim(A/R) \in
  \{0,2,3,4\}$.  %
  If $\dim{R} = 0$ then $A$ is a
  composition algebra, and then a quaternion algebra.

  If $\dim{R} = 2$ then the totally singular subalgebra $R$ is in the
  orbit of $(Fp_0+Fn_0)w$, see~\ref{allTotallySingularSubalgebras},
  and we assume $R=(Fp_0+Fn_0)w$ without loss of generality. %
  We have $A \subseteq \{p_0w,n_0w\}^\perp = \HH+R$, and
  $B \coloneqq A\cap\HH$ has dimension~$2$. As~$A$ contains invertible
  elements, we have $1\in B$. So~$B$ is a subalgebra. From
  $B\cap R=\{0\}$ we infer that the restriction of the polar form
  to~$B$ is not degenerate. So~$B$ is a composition algebra (either a
  separable quadratic extension field of~$F$, or isomorphic to
  $F\times F$), and $A=B+ (Fp_0+Fn_0)w$. %

  Consider a four-dimensional subalgebra~$A'$ with
  $R\subseteq A' \subseteq \HH+R$. Then~$A$ and~$A'$ are isomorphic as
  $F$-algebras if, and only, the subalgebras $B$ and
  $B'\coloneqq A'\cap\HH$ are isomorphic. In that case, we find
  $b\in B\smallsetminus F$ and $b'\in B'\smallsetminus F$ with the same norm
  (i.e., the same determinant) and the same trace. So there exists
  $s\in\GL[2]F = \HH^\times$ with $sbs^{-1}=b'$. Pick an upper
  triangular matrix $t\in\HH$ (${}=F^{2\times2}$) with $\det t=\det s$.
  The map $\alpha_{s,t}\colon a+xw \mapsto sas^{-1}+(txs^{-1})w$ is an
  $F$-linear automorphism of~$\OO$, see~\ref{stabilizerH}.  Now
  $\alpha_{s,t}(b)=b'$, and it is easy to verify
  $\alpha_{s,t}(R)=R$. So $\alpha_{s,t}(A) = A'$. This settles those
  cases in assertion~\ref{QtwoDim} where $\dim{R}=2$. %
  The remaining cases that belong to assertion~\ref{QtwoDim} will be
  discussed below; those are cases where $R=A$ but $\dim{Q}=2$.

  Now assume $\dim{R} \ge 3$, and consider the radical
  $Q \coloneqq \set{x\in A^\perp \cap A}{\N(x)=0}$ of the
  restriction~$\N|_A$ of the quadratic form. %
  From~\ref{radicalQuotient} we know that~$Q$ is a subalgebra.  As
  $1\notin Q$, we have $\dim{Q} \le 3$. %

  If $\dim{Q} = 3$ then the totally singular subalgebra~$Q$ is in the
  orbit of the Heisenberg algebra $n_0\OO\cap\OO n_0$
  (see~\ref{allTotallySingularSubalgebras}), and
  assertion~\ref{F+Heisenberg} follows.

  If $\dim{Q} < 3$ then $Q \ne R$, and $\Char{F} = 2$. In particular,
  the polar form is alternating, and $R = A$ because the co-dimension
  of~$R$ in~$A$ is even.  From~\ref{radicalQuotient} we know that~$A$
  is commutative.
  
  If $\dim{Q}=0$, we pick $b\in A\smallsetminus F$, and note that
  $B\coloneqq F+Fb$ is an inseparable quadratic field extension
  of~$F$. For any $c\in A\smallsetminus B$, we then have
  $B\cap Bc =\{0\}$, and $A = B+Bc$ shows that the algebra~$A$ is
  generated by $\{1,b,c\}$. So~$A$ is associative, commutative, and
  thus a totally inseparable field extension of~$F$ (of degree~$4$ and
  exponent~$1$), as claimed in~\ref{totallyInseparable}.

  If $\dim{Q}\ne0$ then there are $u,v\in A\smallsetminus Q$ such that
  $F+Fu+Fv$ contains a vector space complement of~$Q$ in~$A$. Then the
  subalgebra spanned by $\{1,x,y\}$ is associative, and the
  quotient~$A/Q$ is a totally inseparable field extension of~$F$. In
  particular, the degree of that extension is a power of~$2$, and we
  are left with the case that $\dim{Q}=2$.
  Now~\ref{allTotallySingularSubalgebras} yields that~$Q$ is in the
  orbit of $(Fp_0+Fn_0)w$, and we may assume $Q = (Fp_0+Fn_0)w$.  Then
  $A\subseteq \HH+Q$, and $B \coloneqq A\cap\HH$ is a two-dimensional
  subalgebra, again (albeit not a composition algebra, but an
  inseparable quadratic extension field).

  If~$A'$ is another such algebra (contained in $\HH+Q$), we note
  that~$A$ is isomorphic to~$A'$ precisely if $A/Q$ is isomorphic
  to~$A'/Q$. So $A\cong A'$ means $B\cong B' \coloneqq A'\cap\HH$. As
  above, any algebra isomorphism between $B$ and~$B'$ extends to an
  algebra automorphism $a\mapsto sas^{-1}$ of~$\HH$, any such
  automorphism extends to an automorphism $\alpha_{s,t}$ of~$\OO$ such
  that $\alpha_{s,t}(Q) = Q$, and $\alpha_{s,t}(A) = A'$ follows. %
  We have thus also completed the proof of assertion~\ref{QtwoDim}.

  It remains to verify that each subalgebra~$A$ with $1\in A$ and
  $\dim{A}\le4$ is associative. For the cases in~\ref{dim1} and
  in~\ref{dim2}, this is obvious. %
  For the cases in~\ref{dim3}, we note that $A$ is of the form $A =
  F+X$, where $X$ is a subalgebra generated by two elements. So~$X$ is
  associative  by Artin's theorem (see~\ref{propertiesCA}\ref{artin}),
  and~$A$ is associative, as well. %

  In~\ref{dim4}, the quaternion algebras and the field extensions are
  associative. %
  For the algebra $F+(n_0\OO\cap\OO n_0)$ we use the fact that the
  subalgebra $n_0\OO\cap\OO n_0$ is associative (it is a Heisenberg
  algebra, generated by~$n_0w$ and~$\gal{p_0}w$,
  see~\ref{threedimPureSubalgebra}).

  In the last remaining case, we have to consider $A = B+X$, where
  $B = F+Fb \subseteq \HH$ %
  is a two-dimensional (associative, and commutative) subalgebra
  of~$\HH = F^{2\times2}$, and $X=(Fp_0+Fn_0)w$. %
  Since $(\gal{yw})(xw) = 0$ holds for all $x,y\in X$, the
  multiplication in~$A$ is given by
  $(a+xw)(b+yw) = ab + (ya+x\gal{b})w$, where $a,b\in B$ and
  $x,y\in Fp_0+Fn_0$.  %
  Using the associative and commutative laws in~$B$, we find that~$A$
  is associative.
\end{proof}

\begin{rema}
  The examples in~\ref{unaryDimAtMostFour}\ref{QtwoDim} show
  that not every associative subalgebra of a split octonion algebra is
  contained in a quaternion algebra. 
\end{rema}

\section{Associative and commutative subalgebras}

\begin{nthm}[Associativity of subalgebras]\label{assocResult}
  Let\/ $\OO$ be an octonion algebra. 
  \begin{enumerate}
  \item\label{assoc<4}%
    Every subalgebra of dimension less than~$4$ is associative. 
  \item\label{assoc=4}%
    If\/ $\OO$ is split then there do exist subalgebras of
    dimension~$4$ that are not associative, namely, the totally
    singular ones (of the form $n\OO$
    or $\OO n$ with ${n^2=0\ne n\in\OO}$). 
    Every other subalgebra of dimension~$4$ is associative.
  \item\label{assoc>4}%
    Every subalgebra of dimension greater than~$4$ contains a
    non-associative subalgebra of dimension~$4$.
  \item\label{assocSplit}%
    If\/ $\OO$ is split then a given subalgebra is not associative if,
    and only if, it contains a maximal totally singular subspace.
  \item\label{assocNotSplit}%
    If\/ $\OO$ is not split then every proper subalgebra is
    associative.
  \end{enumerate}
\end{nthm}
\begin{proof}
  If a subalgebra is not totally singular then it contains~$1$.  We
  have seen in~\ref{unaryDimAtMostFour} that each subalgebra~$A$ with
  $1\in A$ and $\dim{A}\le4$ is associative.  A totally singular
  subalgebra is associative if, and only if, it has dimension less
  than~$4$; see~\ref{allTotallySingularSubalgebras}. %
  By~\ref{allTotallySingularSubalgebras}.\ref{totSing4orbits}, the
  totally singular subalgebras of dimension~$4$ are those of the form
  $n\OO$ and~$\OO n$, repsectively, with $n^2=0\ne n$. %
  So assertions~\ref{assoc<4} and~\ref{assoc=4} are established. 

  Each proper subalgebra of dimension greater than~$4$ contains a
  totally singular subalgebra of dimension~$4$ (of the form~$n\OO$ for
  some nilpotent element~$n$, see~\ref{dimGreaterFour}). %
  These subalgebras are not associative. %
  This gives assertions~\ref{assoc>4} and~\ref{assocSplit}.

  If~$\OO$ is not split then every proper subalgebra has dimension at
  most~$4$, and there are no nilpotent elements apart from~$0$. So
  assertions~\ref{assoc<4} and~\ref{assoc=4} yield
  assertion~\ref{assocNotSplit}.
\end{proof}

\begin{coro}
  A $4$-dimensional subalgebra of\/~$\OO$ is associative if, and only
  if, it contains a neutral element for its multiplication. %
  By~\ref{neutralElement}, such a neutral element in a $4$-dimensional
  subalgebra necessarily equals~$1$. %
  \qed
\end{coro}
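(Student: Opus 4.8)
The plan is to derive this immediately from two classification statements already at hand, the associativity dichotomy in~\ref{assocResult} and the description of possible neutral elements in~\ref{neutralElement}, by distinguishing whether the $4$-dimensional subalgebra~$A$ contains~$1$.

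For the ``only if'' direction, I would suppose $A$ is associative with $\dim{A}=4$. By~\ref{assocResult}\ref{assoc=4}, the non-associative subalgebras of dimension~$4$ are exactly the totally singular ones (those of the form $n\OO$ or $\OO n$ with $n^2=0\ne n$), so $A$ is \emph{not} totally singular. Since a totally singular subalgebra is precisely one containing no invertible element, \ref{1inSub} then gives $1\in A$, and $1$ is a neutral element for the multiplication in~$A$. (Alternatively, one may avoid quoting~\ref{assocResult} and argue directly: if $1\notin A$, then $A$ is totally singular of dimension~$4$, hence of the form $n\OO$ or $\OO n$ by~\ref{allTotallySingularSubalgebras}, and such an algebra is non-associative by~\ref{fourdimNotAssoc}.)

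For the ``if'' direction, suppose $A$ contains an element~$e$ that is neutral for its multiplication. A neutral element of a non-zero algebra is non-zero, so $e\ne0$; and $\dim{A}=4>1$ forces $A\ne Fe$. Lemma~\ref{neutralElement} then leaves only the possibility $e=1$, so $1\in A$. Now $A$ is a subalgebra containing~$1$ with $\dim{A}\le4$, and~\ref{unaryDimAtMostFour} says every such subalgebra is associative. The very same reasoning yields the closing remark that a neutral element of a $4$-dimensional subalgebra must equal~$1$.

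No step is genuinely hard; the corollary is bookkeeping over the earlier classification. The one point that deserves care is the use of the hypothesis $\dim{A}=4$ to exclude the exceptional case $A=Fe$ of~\ref{neutralElement}, where $\OO$ is split and $e$ is an idempotent of trace~$1$ and norm~$0$: this is exactly the situation in which a subalgebra carries a neutral element different from~$1$, so the dimension hypothesis is what legitimizes the identification ``neutral element $=1$''.
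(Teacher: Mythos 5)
Your proposal is correct and is essentially the argument the paper intends (the corollary is stated with \qed as an immediate consequence of~\ref{assocResult}, \ref{neutralElement}, \ref{1inSub}, and~\ref{unaryDimAtMostFour}). In particular you correctly isolate the one delicate point, namely that the hypothesis $\dim A=4$ rules out the exceptional case $A=Fe$ of~\ref{neutralElement} and thereby forces the neutral element to be~$1$.
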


\begin{nthm}[Commutativity of subalgebras]\label{commResult}
  Let\/ $\OO$ be an octonion algebra, and let\/~$A$ be a subalgebra. %
  If\/~$\OO$ is split, we write $\OO = \HH+\HH w$, with
  $\HH = F^{2\times2}$ and\/~$w^2=1$. %
  The subalgebra~$A$ is commutative in the following cases:
  \begin{enumerate}
  \item\label{commDim1}%
    $\dim{A} = 1$.
  \item\label{commDim2}%
    $\dim{A} = 2$, and $A = F+Fa \cong F[X]/(X^2-\tr(a)X+\N(a))$.
  \item\label{commDim2trivial}%
    $\dim{A} = 2$, and~$A$ is in the $\Aut[F]\OO$-orbit of\/
    $(Fp_0+Fn_0)w$.
  \item\label{commDim3}%
    $\dim{A} = 3$, and $A$ is in the $\Aut[F]\OO$-orbit of\/
    $F+(Fp_0+Fn_0)w$. %
    \SaveEnumi
  \end{enumerate}
  If\/ $\Char{F} \ne 2$ then these are the only commutative
  subalgebras.
\\
  If\/ $\Char{F}=2$ we have the following additional cases (and no
  others):
  \begin{enumerate}
    \RecallEnumi%
  \item\label{commDim3char2}%
    $\dim{A} = 3$, and~$A$ is in the $\Aut[F]\OO$-orbit of the
    Heisenberg algebra ${Fn_0+(Fp_0+Fn_0)w}$.
  \item\label{commDim4sing}%
    $\dim{A} = 4$, and~$A$ is in the $\Aut[F]\OO$-orbit of\/
    $B+(Fp_0+Fn_0)w$, where $B = F+Fb$ with
    $b\in (1^\perp\cap\HH)\smallsetminus F$.
  \item\label{commDim4nonsing}%
    $\dim{A} = 4$, and~$A$ is a totally inseparable field extension of
    exponent~$1$.
  \end{enumerate}
  In any case, every commutative subalgebra of\/~$\OO$ is associative,
  as well. 
\end{nthm}

\begin{proof}
  We use our classification of subalgebras,
  see~\ref{allTotallySingularSubalgebras},
  \ref{unaryDimAtMostFour}, and~\ref{dimGreaterFour}.
  We note that $Fp_0+Fn_0$ is not commutative, in fact $p_0n_0 = n_0
  \ne 0 = n_0p_0$. So we can exclude from the list of subalgebras each
  algebra that contains a subalgebra isomorphic to $Fp_0+Fn_0$, or
  isomorphic to $\kappa(Fp_0+Fn_0) = F\gal{p_0}+Fn_0$. %
  In particular, we note that $n_0\OO$ is excluded by the observation
   $p_0 = \left(
    \begin{smallmatrix}
      1 & 0 \\
      0 & 0 
    \end{smallmatrix}
  \right)
  =
  \left(
    \begin{smallmatrix}
      0 & 1 \\
      0 & 0 
    \end{smallmatrix}
  \right)
  \left(
    \begin{smallmatrix}
      0 & 0 \\
      1 & 0 
    \end{smallmatrix}
  \right)
  \in n_0\HH \subseteq n_0\OO$. %
  This leaves us with the following list:

  \begin{itemize}
  \item Subalgebras of dimension~$1$: these are clearly commutative.
  \item Subalgebras in the orbit of $Q \coloneqq (Fp_0+Fn_0)w$: here the
    multiplication is trivial, and thus commutative.
  \item Subalgebras in the orbit of $F+Q$: these algebras are
    commutative because~$Q$ is.
  \item Heisenberg algebras (in the orbit of
    $Fn_0+Q = (n_0w)\OO\cap\OO(n_0w)$,
    see~\ref{Heisenberg}\ref{heisenbergIndeed}); the only nontrivial
    products of elements in $\{n_0,p_0w,n_0w\}$ are $n_0(p_0w) = n_0w$
    and $(p_0w)n_0 = -n_0w$.  So the Heisenberg algebras are
    commutative if, and only if, the field~$F$ has characteristic two.
  \item Subalgebras in the orbit of $F+Fn_0+Q$: such an algebra is
    commutative if, and only if, the Heisenberg algebra $Fn_0+Q$ is
    commutative, and we already know that this happens precisely if
    $\Char{F}=2$.
  \item Totally inseparable extensions fields of degree~$4$ (only
    possible if $\Char{F}=2$) are commutative.
  \item Subalgebras in the orbit of $B+Q$, where $B \subseteq \HH$ is
    either a two-dimensional composition algebra, or an inseparable
    quadratic field extension.

    For $b\in B$ and $x\in Fp_0+Fn_0$ we compute $b(xw) = (xb)w$ and
    $(xw)b = (x\gal{b})w$. %
    If $B\subseteq 1^\perp$ then $\gal{b}=b$ holds for each $b\in B$,
    and $B+Q$ is commutative. %
    If $B\not\subseteq 1^\perp$ then $B$ is a composition algebra. So
    either $B\cong F\times F$, or $B$ is a separable quadratic
    extension field. In both cases, we find $b\in B$ such $b-\gal{b}$
    is invertible, and $b(xw) = (xw)b$ implies $x=0$. %
    So the algebra $B+Q$ is not commutative in these cases.
  \end{itemize}
  Our discussion yields that the commutative subalgebras of~$\OO$ are
  exactly those given in the statement of the theorem. %
  From~\ref{assocResult} we infer that each one of these commutative
  subalgebras is in fact associative.
\end{proof}

\section{Maximal subalgebras}
\label{sec:maximal}

\begin{ndef}[The lattice of isomorphism types of subalgebras]
  \label{lattice}
  Let $p,n,m$ be elements of a (necessarily split) octonion
  algebra~$\OO$ with $p^2=p\notin\{0,1\}$, $n^2=0=m^2$, and such that
  $m,n$ are linearly independent. Moreover, assume that $pn=n$ and
  $np=0=nm$ (then $mn=0$). %
  We abbreviate $S\coloneqq F+Fp \cong F\times F$, and
  $Q\coloneqq Fm+Fn$.  Note that $S$ is a split two-dimensional
  composition algebra, while the multiplication on~$Q$ is trivial. %
  Note also that $T \coloneqq {F+Fp+Fn}$ is isomorphic to the algebra of
  upper triangular matrices in~$F^{2\times2}$, and that $n\OO\cap\OO
  n$ is a Heisenberg algebra (see~\ref{Heisenberg}).

  Let $X$ be any subalgebra of dimension~$d$ in~$\OO$. Our
  classification of subalgebras
  (see~\ref{allTotallySingularSubalgebras}, \ref{dimGreaterFour},
  and~\ref{unaryDimAtMostFour}) can be rephrased, as follows.
  \begin{description}
  \item[$d=1$:]%
    Then $X$ is in the orbit of $Y\in\{F,Fp,Fn\}$ under~$\Aut[F]\OO$.
  \item[$d=2$:]%
    Either~$X$ is in the $\Aut[F]\OO$-orbit of
    \(
      Y\in \{ S, %
      {F+Fn}, {Fn+Fp}, {Fn+F\gal{p}}, Q %
      \} \,,
    \)
    or~$X$ is a separable extension field~$E$ over~$F$, or
    $\Char{F}=2$ and $X$ is an inseparable extension field~$D$
    of~$F$. %
    Moreover, any subalgebra isomorphic to~$E$ or~$D$ is in the
    $\Aut[F]\OO$-orbit of~$E$ or~$D$, respectively.
  \item[$d=3$:]%
    Then $X$ is in the $\Aut[F]\OO$-orbit of
    \( Y\in\{ T, F+Q, {m\OO\cap\OO n}, {n\OO\cap\OO n} \} %
    \).
  \item[$d=4$:]%
    Then $X$ is in the orbit of $Y$ such that either
    $Y\in\{ {F+(n\OO\cap\OO n)}, {n\OO}, {\OO n}\}$, or one of the
    following holds:
    \begin{itemize}
    \item $Y\cong F^{2\times2}$ is a split quaternion algebra,
    \item there is a subalgebra $S\cong F\times F$ contained
      in~$Q^\perp$ such that $Y={S+Q}$,
    \item $Y$ is a quaternion field (denoted by $H$ in
      Figure~\ref{fig:lattice}),
    \item there exists a separable extension field~$E$ of degree~$2$
      contained in $Q^\perp$ such that $Y=E+Q$,
    \item $\Char{F}=2$ and there exists an inseparable extension
      field~$D$ of degree~$2$ contained in $Q^\perp$ such that
      $Y=D+Q$,
    \item $\Char{F}=2$ and $Y$ is a totally inseparable extension
      field (denoted by $K$ in Figure~\ref{fig:lattice}) of degree~$4$
      and exponent~$1$.
    \end{itemize}
  \item[$d=5$:]%
    Then $X$ is in the $\Aut[F]\OO$-orbit of~$n\OO+\OO n$.
  \item[$d=6$:]%
    Then $X$ is in the $\Aut[F]\OO$-orbit
    of~$Q^\perp = m\OO+n\OO = \OO m+\OO n$.
  \end{description}
  The algebras $F$ and $Fp$ are isomorphic but not in the same orbit
  under $\Aut[F]\OO$. %
  In any other case, two subalgebras of~$\OO$ are in the same orbit
  under $\Aut[F]\OO$ if, and only if, the subalgebras are isomorphic
  (as abstract $F$-algebras).

  Consider subalgebras $X\ne F$ and~$Y^*$ with $\dim{Y^*}>1$. %
  Since every subalgebra isomorphic to~$Y^*$ is in the
  $\Aut[F]\OO$-orbit of~$Y^*$, there exists a subalgebra $X^*$
  isomorphic to~$Y^*$ with $X\subseteq X^*$ if, and only if, the
  algebra~$Y^*$ has a subalgebra~$Y$ isomorphic to~$X$. %
  We can thus describe inclusions between subalgebras by a graph with
  the $\Aut[F]\OO$-orbits of subalgebras as vertices, and edges
  indicating embeddings. In Figure~\ref{fig:lattice} we have further
  simplified that graph by identifying orbits of subalgebras that
  contain extension fields; corresponding vertices are indicated by
  doubled boundaries at their labels (namely, $E$, $H$, $E+Q$, $D$,
  $K$, $D+Q$). Recall that the existence of such extensions heavily
  depends on the structure of~$F$ (e.g., no such extension exists
  if~$F$ is algebraically closed); the corresponding embeddings
  (indicated by dotted lines in Figure~\ref{fig:lattice}) exist only
  if the subalgebras in question exist. The inseparable extensions
  only exist if $\Char{F}=2$, this is indicated by dashed boundaries
  (and dashed edges indicating embeddings).

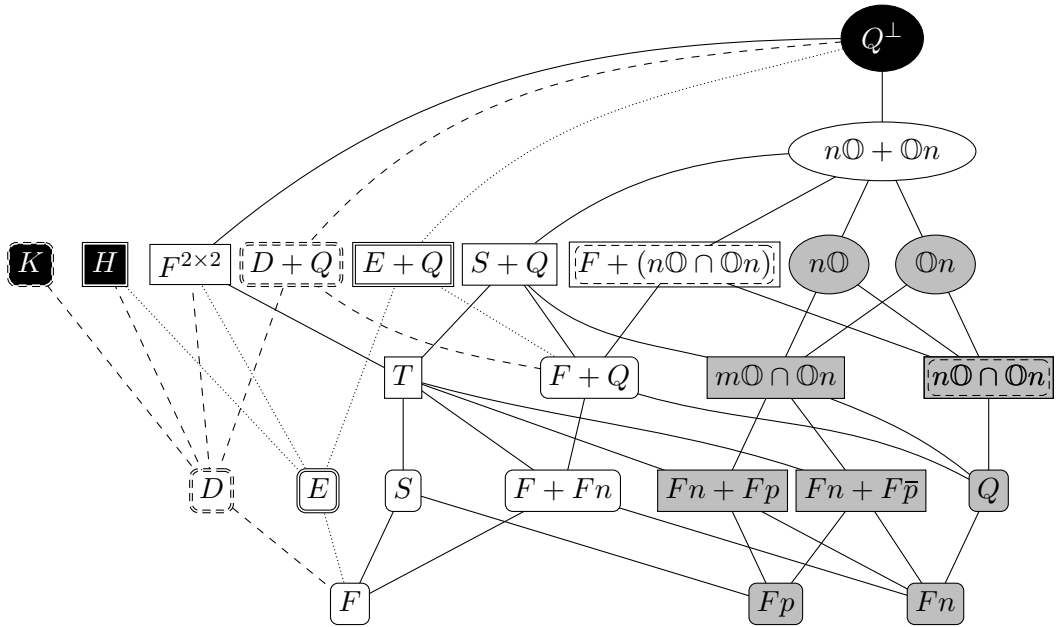
\begin{figure}[h!]
  \centering
  \begin{tikzpicture}[%
    xscale=1.4, %
    yscale=1.5, %
    every node/.append style={rectangle, fill=white, draw=black, %
        inner sep=3pt, %
        minimum size=2pt}]

    \coordinate (F) at (-5,0) {} ;%
    \coordinate (Fp) at (-1,0) {} ;%
    \coordinate (Fn) at ( .5,0) {} ;%
    \coordinate (Fn+Fp) at (-1.5,1) {} ;%
    \coordinate (Fn+Fq) at ( -.2,1) {} ;%
    \coordinate (Fn+Fnw) at (1,1) {} ;
    \coordinate (Fn+Fnw+Fpw) at (-1,2) {} ;%
    \coordinate (On*nO) at (1,2) {} ;%
    \coordinate (On) at ( .5,3) {} ;%
    \coordinate (nO) at (-.5,3) {} ;%
    \coordinate (5) at (0,4) {} ;%
    \coordinate (6) at (0,5) {} ;%

    \coordinate (E) at (-5.3,1) {} ;%
    \coordinate (D) at (-6.3,1) {} ;%
    \coordinate (H) at (-7.3,3) {} ;%
    \coordinate (K) at (-8,3) {} ;%
    
    \coordinate (F+Fn) at (-3.0,1) {} ;%
    \coordinate (FxF) at (-4.5,1) {} ;%
    \coordinate (FxF') at (-4.75,1) {} ;%
    \coordinate (F+Fn+Fnw) at (-2.75,2) {} ;%
    \coordinate (F+Fn+Fp) at (-4.5,2) {} ;%
    \coordinate (F2x2) at (-6.5,3) {} ;%

    \coordinate (F+On*nO) at (-1.95,3) {} ;%
    \coordinate (D+Q) at (-5.55,3) {} ;%
    \coordinate (E+Q) at (-4.5,3) {} ;%
    \coordinate (S+Q) at (-3.5,3) {} ;%
    \coordinate (B+Q') at (-4.6,3) {} ;%

    \draw (Fp)--(Fn+Fp) ;%
    \draw (Fn)--(Fn+Fp) ;%
    \draw (Fn)--(Fn+Fnw) ;%
    \draw (Fp)--(Fn+Fq) ;%
    \draw (Fn)--(Fn+Fq) ;%
    \draw (Fn+Fp)--(Fn+Fnw+Fpw) ;%
    \draw (Fn+Fnw)--(On*nO) ;
    \draw[in=-20,out=140] (Fn+Fnw) to (Fn+Fnw+Fpw) ;
    \draw (Fn+Fq)--(Fn+Fnw+Fpw) ;%
    \draw (On*nO)--(nO) ;%
    \draw (On*nO)--(On) ;%
    \draw (Fn+Fnw+Fpw)--(nO) ;%
    \draw (Fn+Fnw+Fpw)--(On) ;%
    \draw (nO)--(5) ;%
    \draw (On)--(5) ;%
    \draw (5)--(6) ;%

    \draw[dashed] (F)--(D) ;%
    \draw[densely dotted] (F)--(E) ;%
    \draw[dashed] (D)--(K) ;%
    \draw[dashed] (D)--(H) ;%
    \draw[densely dotted] (E)--(H) ;%

    \draw (F)--(F+Fn) ;%
    \draw (Fn)--(F+Fn) ;%
    \draw (Fp)--(FxF) ;%
    \draw (FxF)--(F+Fn+Fp) ;%
    \draw (Fn+Fp)--(F+Fn+Fp) ;%
    \draw[in=-20,out=150] (Fn+Fnw) to (F+Fn+Fnw) ;
    \draw[in=-15,out=160] (Fn+Fq) to (F+Fn+Fp) ;%

    \draw (F)--(FxF) ;%
    \draw (F+Fn+Fp) to (F2x2) ;%
    \draw[densely dotted] (E) to (F2x2) ;%
    \draw[dashed] (D)--(F2x2) ;%

    \draw (On*nO)--(F+On*nO) ;%
    \draw (F+On*nO)--(5) ;%

    \draw[densely dotted] (E) to (E+Q) ;%
    \draw[dashed] (D) to (D+Q) ;%
    \draw (F+Fn+Fnw) to (S+Q) ;%
    \draw[in=-40,out=170,dashed] (F+Fn+Fnw) to (D+Q) ;%
    \draw[densely dotted] (F+Fn+Fnw) to (E+Q) ;%
    \draw (F+Fn+Fp) to (S+Q) ;%
    \draw (F+Fn+Fnw) to (F+On*nO) ;%
    \draw[in=-50,out=160] (Fn+Fnw+Fpw) to (S+Q) ;%
    \draw (F+Fn) to (F+Fn+Fp) ;%
    \draw (F+Fn) to (F+Fn+Fnw) ;%

    \draw[in=-180,out=40] (F2x2) to (6) ;%
    \draw[in=-175,out=50,dashed] (D+Q) to (6) ;%
    \draw[in=-165,out=50, densely dotted] (E+Q) to (6) ;%
    \draw[in=-180,out=40] (S+Q) to (5) ;%
    
    \node[rounded corners=1mm] at (F) {$F\vphantom{p}$} ;%
    \node[rounded corners=1mm][fill=lightgray] at (Fp) {$Fp$} ;%
    \node[rounded corners=1mm][fill=lightgray] at (Fn) {$Fn\vphantom{p}$} ;%
    \node[fill=lightgray] at (Fn+Fp) {$Fn+Fp$} ;%
    \node[rounded corners=1mm][fill=lightgray] at (Fn+Fnw) {$Q\vphantom{p}$} ;%
    \node[fill=lightgray] at (Fn+Fq) {$Fn+F\gal{p}$} ;%
    \node[fill=lightgray] at (Fn+Fnw+Fpw) {$m\OO\cap\OO n\vphantom{p}$} ;
    \node[fill=lightgray] at (On*nO) {$n\OO\cap\OO n\vphantom{p}$} ;%
    \node[fill=none,rounded corners=1mm,densely dashed] at (On*nO) {$\phantom{nO\cap on}$} ;%
    \node[fill=none] at (On*nO) {$n\OO\cap\OO n\vphantom{p}$} ;%
    \node[fill=lightgray,style=ellipse] at (nO) {$n\OO\vphantom{p}$} ;%
    \node[fill=lightgray,style=ellipse] at (On) {$\OO n\vphantom{p}$} ;%
    \node[style=ellipse] at (5) {$n\OO+\OO n\vphantom{p}$} ;%
    \node[fill=black,style=ellipse] at (6) {$\color{white}Q^\perp$} ;%

    \node[rounded corners=1mm,double,densely dashed,double] at (D) {$D\vphantom{p}$} ;%
    \node[rounded corners=1mm,double] at (E) {$E\vphantom{p}$} ;%
    \node[fill=black,rounded corners=1mm,double,densely dashed,double] at (K) {$\color{white}K\vphantom{p}$} ;%
    \node[fill=black,double] at (H) {$\color{white}H\vphantom{p}$} ;%
    
    \node[rounded corners=1mm] at (FxF) {$S\vphantom{p}$} ;%
    \node[rounded corners=1mm] at (F+Fn) {$F+Fn\vphantom{p}$} ;%
    \node at (F+Fn+Fp) {$T\vphantom{p}$} ;%
    \node[rounded corners=1mm] at (F+Fn+Fnw) {$F+Q\vphantom{p}$} ;%
    \node at (F2x2) {$F^{2\times2}$} ;%

    \node[rounded corners=1mm,densely dashed] at (F+On*nO) {\phantom{$\,F+nO\cap O n\,$} } ;%
    \node[fill=none] at (F+On*nO) {$F+(n\OO\cap\OO n)\vphantom{p}$ } ;%
    \node[rounded corners=1mm,double,densely dashed] at (D+Q) {$D+Q\vphantom{p}$ } ;%
    \node[double] at (E+Q) {$E+Q\vphantom{p}$ } ;%
    \node at (S+Q) {$S+Q\vphantom{p}$ } ;%

  \end{tikzpicture}
  \caption{The lattice of orbits of subalgebras, see~\ref{lattice}.}
  \label{fig:lattice}
\end{figure}

In Figure~\ref{fig:lattice}, the maximal element~$\OO$ is omitted.
The maximal proper subalgebras are in the orbits with black labels,
i.e., those represented by non-split quaternion subalgebras, totally
inseparable extensions of degree~$4$ and exponent~$1$,
and~$Q^\perp$. Grey labels indicate subalgebras that do not
contain~$1$ (i.e., totally singular ones), and rectangular labels (as
opposed to elliptical ones) indicate associative algebras. %
Rectangular labels with rounded corners indicate commutative algebras
(all of them are associative).  Double lining is used for subalgebras
whose existence depends on the ground field, and dashed lines indicate
subalgebras that only exist if $\Char{F}=2$. %
Finally, the algebras $n\OO\cap\OO n$ and $F+(n\OO\cap\OO n)$ are
commutative precisely if $\Char{F}=2$. %

Figures~\ref{fig:latticeCharOdd} and~\ref{fig:latticeRestricted} give
the simpler graphs for the cases where $\Char{F}\ne2$, and where
$\Char{F}\ne2$ and $F$ allows no quadratic field extension. %
If~$F$ is finite then the vertices labeled by ``$K$'' or by ``$H$''
have to be omitted from the graph in Figure~\ref{fig:lattice}. %
\end{ndef}

\begin{rems}
  Let~$\OO$ be the split octonion algebra over~$F$; we know that~$\OO$
  is unique up to isomorphism. %

  If there does exist a quadratic extension field (separable or not)
  over~$F$ then the split quaternion algebra~$F^{2\times2}$ over~$F$
  contains subalgebras isomorphic to that extension
  field. Doubling~$F^{2\times2}$ we obtain a split octonion algebra,
  and thus see that~$\OO$ contains subalgebras isomorphic to any
  quadratic extension field of~$F$.

  If there exists a quaternion field~$H$ over~$F$ then a suitable
  double of~$H$ is a split octonion algebra, and we obtain that~$\OO$
  contains subalgebras isomorphic to~$H$. However, using the Kaplansky
  radical (cp.~\cite[Ch.\,XII, Prop.\,6.1, pp.\,450\,f]{MR2104929})
  one sees that there do exist fields~$F$ with quadratic extensions
  of~$F$ that are not contained in any quaternion field over~$F$ ---
  although there do exist quaternion fields over~$F$ (see the examples
  in~\cite[p.\,461]{MR2104929} or~\cite[2.4]{MR3188856}). %
  If~$F$ is such a field, then the split octonion algebra over~$F$
  contains quaternion fields and also quadratic extensions that are
  not contained in any quaternion field, so the inclusion between~$E$
  and~$H$ in Figure~\ref{fig:lattice} has to be interpreted with due
  care. 
    
  Finally, assume that there exists a totally inseparable extension
  field~$K$ of degree~$4$ and exponent~$1$ over~$F$.  Then~\cite[4.3,
  4.6]{MR2746044} shows that there is a split octonion algebra
  containing a subalgebra isomorphic to~$K$.
\end{rems}

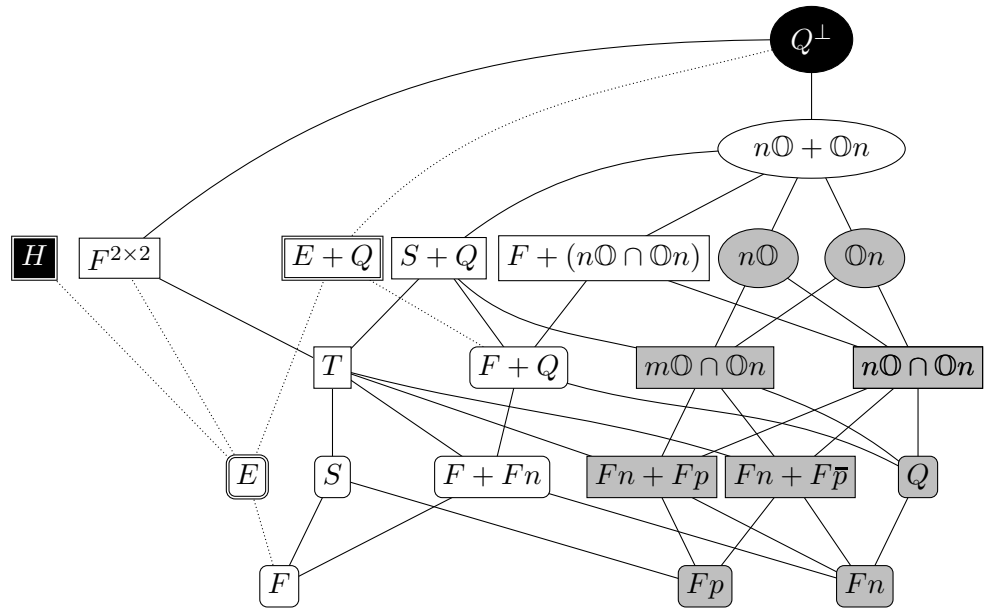
\begin{figure}[h!]
  \centering
  \begin{tikzpicture}[%
    xscale=1.4, %
    yscale=1.45, %
    every node/.append style={rectangle, fill=white, draw=black, %
        inner sep=3pt, %
        minimum size=2pt}]

    \coordinate (F) at (-5,0) {} ;%
    \coordinate (Fp) at (-1,0) {} ;%
    \coordinate (Fn) at ( .5,0) {} ;%
    \coordinate (Fn+Fp) at (-1.5,1) {} ;%
    \coordinate (Fn+Fq) at ( -.2,1) {} ;%
    \coordinate (Fn+Fnw) at (1,1) {} ;
    \coordinate (Fn+Fnw+Fpw) at (-1,2) {} ;%
    \coordinate (On*nO) at (1,2) {} ;%
    \coordinate (On) at ( .5,3) {} ;%
    \coordinate (nO) at (-.5,3) {} ;%
    \coordinate (5) at (0,4) {} ;%
    \coordinate (6) at (0,5) {} ;%

    \coordinate (E) at (-5.3,1) {} ;%
    \coordinate (D) at (-6.3,1) {} ;%
    \coordinate (H) at (-7.3,3) {} ;%
    \coordinate (K) at (-8,3) {} ;%
    
    \coordinate (F+Fn) at (-3.0,1) {} ;%
    \coordinate (FxF) at (-4.5,1) {} ;%
    \coordinate (FxF') at (-4.75,1) {} ;%
    \coordinate (F+Fn+Fnw) at (-2.75,2) {} ;%
    \coordinate (F+Fn+Fp) at (-4.5,2) {} ;%
    \coordinate (F2x2) at (-6.5,3) {} ;%

    \coordinate (F+On*nO) at (-1.95,3) {} ;%
    \coordinate (D+Q) at (-5.55,3) {} ;%
    \coordinate (E+Q) at (-4.5,3) {} ;%
    \coordinate (S+Q) at (-3.5,3) {} ;%
    \coordinate (B+Q') at (-4.6,3) {} ;%

    \draw (Fp)--(Fn+Fp) ;%
    \draw (Fn)--(Fn+Fp) ;%
    \draw (Fn)--(Fn+Fnw) ;%
    \draw (Fp)--(Fn+Fq) ;%
    \draw (Fn)--(Fn+Fq) ;%
    \draw (Fn+Fp)--(On*nO) ;%
    \draw (Fn+Fp)--(Fn+Fnw+Fpw) ;%
    \draw (Fn+Fnw)--(On*nO) ;%
    \draw[in=-20,out=140] (Fn+Fnw) to (Fn+Fnw+Fpw) ;
    \draw (Fn+Fq)--(On*nO) ;%
    \draw (Fn+Fq)--(Fn+Fnw+Fpw) ;%
    \draw (On*nO)--(nO) ;%
    \draw (On*nO)--(On) ;%
    \draw (Fn+Fnw+Fpw)--(nO) ;%
    \draw (Fn+Fnw+Fpw)--(On) ;%
    \draw (nO)--(5) ;%
    \draw (On)--(5) ;%
    \draw (5)--(6) ;%

    \draw[densely dotted] (F)--(E) ;%
    \draw[densely dotted] (E)--(H) ;%

    \draw (F)--(F+Fn) ;%
    \draw (Fn)--(F+Fn) ;%
    \draw (Fp)--(FxF) ;%
    \draw (FxF)--(F+Fn+Fp) ;%
    \draw (Fn+Fp)--(F+Fn+Fp) ;%
    \draw[in=-20,out=150] (Fn+Fnw) to (F+Fn+Fnw) ;
    \draw[in=-15,out=160] (Fn+Fq) to (F+Fn+Fp) ;%

    \draw (F)--(FxF) ;%
    \draw (F+Fn+Fp) to (F2x2) ;%
    \draw[densely dotted] (E) to (F2x2) ;%

    \draw (On*nO)--(F+On*nO) ;%
    \draw (F+On*nO)--(5) ;%

    \draw[densely dotted] (E) to (E+Q) ;%
    \draw (F+Fn+Fnw) to (S+Q) ;%
    \draw[densely dotted] (F+Fn+Fnw) to (E+Q) ;%
    \draw (F+Fn+Fp) to (S+Q) ;%
    \draw (F+Fn+Fnw) to (F+On*nO) ;%
    \draw[in=-50,out=160] (Fn+Fnw+Fpw) to (S+Q) ;%
    \draw (F+Fn) to (F+Fn+Fp) ;%
    \draw (F+Fn) to (F+Fn+Fnw) ;%

    \draw[in=-180,out=40] (F2x2) to (6) ;%
    \draw[in=-165,out=50, densely dotted] (E+Q) to (6) ;%
    \draw[in=-180,out=40] (S+Q) to (5) ;%
    
    \node[rounded corners=1mm] at (F) {$F\vphantom{p}$} ;%
    \node[rounded corners=1mm][fill=lightgray] at (Fp) {$Fp$} ;%
    \node[rounded corners=1mm][fill=lightgray] at (Fn) {$Fn\vphantom{p}$} ;%
    \node[fill=lightgray] at (Fn+Fp) {$Fn+Fp$} ;%
    \node[rounded corners=1mm][fill=lightgray] at (Fn+Fnw) {$Q\vphantom{p}$} ;%
    \node[fill=lightgray] at (Fn+Fq) {$Fn+F\gal{p}$} ;%
    \node[fill=lightgray] at (Fn+Fnw+Fpw) {$m\OO\cap\OO n\vphantom{p}$} ;%
    \node[fill=lightgray] at (On*nO) {$n\OO\cap\OO n\vphantom{p}$} ;%
    \node[fill=none] at (On*nO) {$n\OO\cap\OO n\vphantom{p}$} ;%
    \node[fill=lightgray,style=ellipse] at (nO) {$n\OO\vphantom{p}$} ;%
    \node[fill=lightgray,style=ellipse] at (On) {$\OO n\vphantom{p}$} ;%
    \node[style=ellipse] at (5) {$n\OO+\OO n\vphantom{p}$} ;%
    \node[fill=black,style=ellipse] at (6) {$\color{white}Q^\perp$} ;%

    \node[rounded corners=1mm,double] at (E) {$E\vphantom{p}$} ;%
    \node[fill=black,double] at (H) {$\color{white}H\vphantom{p}$} ;%
    
    \node[rounded corners=1mm] at (FxF) {$S\vphantom{p}$} ;%
    \node[rounded corners=1mm] at (F+Fn) {$F+Fn\vphantom{p}$} ;%
    \node at (F+Fn+Fp) {$T\vphantom{p}$} ;%
    \node[rounded corners=1mm] at (F+Fn+Fnw) {$F+Q\vphantom{p}$} ;%
    \node at (F2x2) {$F^{2\times2}$} ;%

    \node at (F+On*nO) {$F+(n\OO\cap\OO n)\vphantom{p}$ } ;%
    \node[double] at (E+Q) {$E+Q\vphantom{p}$ } ;%
    \node at (S+Q) {$S+Q\vphantom{p}$ } ;%
  \end{tikzpicture}
  \caption{The lattice of orbits of subalgebras for the case where
    $\Char{F}\ne2$, see~\ref{lattice}.}
  \label{fig:latticeCharOdd}
\end{figure}

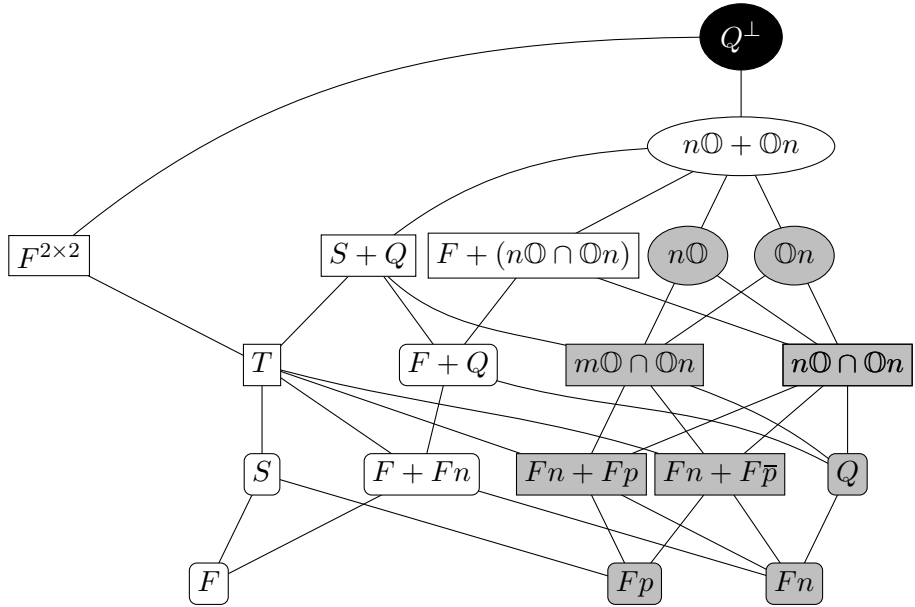
\begin{figure}[h!]
  \centering
  \begin{tikzpicture}[%
    xscale=1.4, %
    yscale=1.45, %
    every node/.append style={rectangle, fill=white, draw=black, %
        inner sep=3pt, %
        minimum size=2pt}]

    \coordinate (F) at (-5,0) {} ;%
    \coordinate (Fp) at (-1,0) {} ;%
    \coordinate (Fn) at ( .5,0) {} ;%
    \coordinate (Fn+Fp) at (-1.5,1) {} ;%
    \coordinate (Fn+Fq) at ( -.2,1) {} ;%
    \coordinate (Fn+Fnw) at (1,1) {} ;
    \coordinate (Fn+Fnw+Fpw) at (-1,2) {} ;%
    \coordinate (On*nO) at (1,2) {} ;%
    \coordinate (On) at ( .5,3) {} ;%
    \coordinate (nO) at (-.5,3) {} ;%
    \coordinate (5) at (0,4) {} ;%
    \coordinate (6) at (0,5) {} ;%

    \coordinate (E) at (-5.3,1) {} ;%
    \coordinate (D) at (-6.3,1) {} ;%
    \coordinate (H) at (-7.3,3) {} ;%
    \coordinate (K) at (-8,3) {} ;%
    
    \coordinate (F+Fn) at (-3.0,1) {} ;%
    \coordinate (FxF) at (-4.5,1) {} ;%
    \coordinate (FxF') at (-4.75,1) {} ;%
    \coordinate (F+Fn+Fnw) at (-2.75,2) {} ;%
    \coordinate (F+Fn+Fp) at (-4.5,2) {} ;%
    \coordinate (F2x2) at (-6.5,3) {} ;%

    \coordinate (F+On*nO) at (-1.95,3) {} ;%
    \coordinate (D+Q) at (-5.55,3) {} ;%
    \coordinate (E+Q) at (-4.5,3) {} ;%
    \coordinate (S+Q) at (-3.5,3) {} ;%
    \coordinate (B+Q') at (-4.6,3) {} ;%

    \draw (Fp)--(Fn+Fp) ;%
    \draw (Fn)--(Fn+Fp) ;%
    \draw (Fn)--(Fn+Fnw) ;%
    \draw (Fp)--(Fn+Fq) ;%
    \draw (Fn)--(Fn+Fq) ;%
    \draw (Fn+Fp)--(On*nO) ;%
    \draw (Fn+Fp)--(Fn+Fnw+Fpw) ;%
    \draw (Fn+Fnw)--(On*nO) ;%
    \draw[in=-20,out=140] (Fn+Fnw) to (Fn+Fnw+Fpw) ;
    \draw (Fn+Fq)--(On*nO) ;%
    \draw (Fn+Fq)--(Fn+Fnw+Fpw) ;%
    \draw (On*nO)--(nO) ;%
    \draw (On*nO)--(On) ;%
    \draw (Fn+Fnw+Fpw)--(nO) ;%
    \draw (Fn+Fnw+Fpw)--(On) ;%
    \draw (nO)--(5) ;%
    \draw (On)--(5) ;%
    \draw (5)--(6) ;%

    \draw (F)--(F+Fn) ;%
    \draw (Fn)--(F+Fn) ;%
    \draw (Fp)--(FxF) ;%
    \draw (FxF)--(F+Fn+Fp) ;%
    \draw (Fn+Fp)--(F+Fn+Fp) ;%
    \draw[in=-20,out=150] (Fn+Fnw) to (F+Fn+Fnw) ;
    \draw[in=-15,out=160] (Fn+Fq) to (F+Fn+Fp) ;%

    \draw (F)--(FxF) ;%
    \draw (F+Fn+Fp) to (F2x2) ;%

    \draw (On*nO)--(F+On*nO) ;%
    \draw (F+On*nO)--(5) ;%

    \draw (F+Fn+Fnw) to (S+Q) ;%
    \draw (F+Fn+Fp) to (S+Q) ;%
    \draw (F+Fn+Fnw) to (F+On*nO) ;%
    \draw[in=-50,out=160] (Fn+Fnw+Fpw) to (S+Q) ;%
    \draw (F+Fn) to (F+Fn+Fp) ;%
    \draw (F+Fn) to (F+Fn+Fnw) ;%

    \draw[in=-180,out=40] (F2x2) to (6) ;%
    \draw[in=-180,out=40] (S+Q) to (5) ;%
    
    \node[rounded corners=1mm] at (F) {$F\vphantom{p}$} ;%
    \node[rounded corners=1mm][fill=lightgray] at (Fp) {$Fp$} ;%
    \node[rounded corners=1mm][fill=lightgray] at (Fn) {$Fn\vphantom{p}$} ;%
    \node[fill=lightgray] at (Fn+Fp) {$Fn+Fp$} ;%
    \node[rounded corners=1mm][fill=lightgray] at (Fn+Fnw) {$Q\vphantom{p}$} ;%
    \node[fill=lightgray] at (Fn+Fq) {$Fn+F\gal{p}$} ;%
    \node[fill=lightgray] at (Fn+Fnw+Fpw) {$m\OO\cap\OO n\vphantom{p}$} ;
    \node[fill=lightgray] at (On*nO) {$n\OO\cap\OO n\vphantom{p}$} ;%
    \node[fill=none] at (On*nO) {$n\OO\cap\OO n\vphantom{p}$} ;%
    \node[fill=lightgray,style=ellipse] at (nO) {$n\OO\vphantom{p}$} ;%
    \node[fill=lightgray,style=ellipse] at (On) {$\OO n\vphantom{p}$} ;%
    \node[style=ellipse] at (5) {$n\OO+\OO n\vphantom{p}$} ;%
    \node[fill=black,style=ellipse] at (6) {$\color{white}Q^\perp$} ;%
    
    \node[rounded corners=1mm] at (FxF) {$S\vphantom{p}$} ;%
    \node[rounded corners=1mm] at (F+Fn) {$F+Fn\vphantom{p}$} ;%
    \node at (F+Fn+Fp) {$T\vphantom{p}$} ;%
    \node[rounded corners=1mm] at (F+Fn+Fnw) {$F+Q\vphantom{p}$} ;%
    \node at (F2x2) {$F^{2\times2}$} ;%

    \node at (F+On*nO) {$F+(n\OO\cap\OO n)\vphantom{p}$ } ;%
    \node at (S+Q) {$S+Q\vphantom{p}$ } ;%

  \end{tikzpicture}
  \caption{The lattice of orbits of subalgebras for the case where $F$
    is quadratically closed and $\Char{F}\ne2$, see~\ref{lattice}.}
  \label{fig:latticeRestricted}
\end{figure}

\begin{ndef}[Open Problems]
  For each subalgebra~$X$ in a given octonion algebra~$\OO$ over~$F$:
  \begin{enumerate}
  \item Determine the full group $\Aut{X}$ of all $\ZZ$-linear
    automorphisms of~$X$, and the group~$\Aut[F]X$ of all algebra
    automorphisms of~$X$.
  \item Determine the stabilizers $\Aut\OO_X$ and $\Aut[F]\OO_X$
    of~$X$ in~$\Aut\OO$ and in~$\Aut[F]\OO$, repsectively.
  \item Determine the subgroups of $\Aut{X}$ induced by $\Aut\OO_X$
    and by $\Aut[F]\OO_X$, respectively.
  \end{enumerate}
\end{ndef}



\newpage

\medskip

\begin{small}
  \begin{minipage}[t]{0.3\linewidth}
    Norbert Knarr, \\ 
    Markus J. Stroppel 
  \end{minipage}
  \begin{minipage}[t]{0.6\linewidth}
    LExMath\\
    Fakult\"at 8\\
    Universit\"at Stuttgart\\
    70550 Stuttgart\\ 
    stroppel@mathematik.uni-stuttgart.de 
  \end{minipage}
\end{small}

\end{document}